\newtheorem{theorem}{Theorem}[section]
\newtheorem{cor}[theorem]{Corollary}
\newtheorem{lemma}[theorem]{Lemma}
\newtheorem{definition}[theorem]{Definition}
\newtheorem{prop}[theorem]{Proposition}
\newtheorem{remark}[theorem]{Remark}
\numberwithin{equation}{section}
\newcommand{\R}{\mathbb{R}}
\newcommand{\T}{\mathbb{T}}
\renewcommand{\S}{{\mathbb S}}
\newcommand{\function}[5]{\begin{array}[t]{lrcl}
#1 : & #2 & \longrightarrow & #3 \\
     & #4 & \longmapsto & #5
\end{array}}
\newcommand{\func}[3]{#1 : #2 \longrightarrow #3}
\newcommand{\disp}{\displaystyle}
\newcommand{\abs}[1]{\left|#1\right|}
\newcommand{\eps}{\varepsilon}
\newcommand{\norm}[1]{\left\|#1\right\|}
\renewcommand{\leq}{\leqslant}
\renewcommand{\geq}{\geqslant}
\renewcommand{\bar}{\overline}
\renewcommand{\tilde}{\widetilde}
\newcommand{\pa}[1]{\left(#1\right)}
\newcommand{\cro}[1]{\left[#1\right]}
\newcommand{\br}[1]{\left\{#1\right\}}
\newcommand\restr[2]{{
  \left.\kern-\nulldelimiterspace 
  #1 
  \right|_{ #2} 
  }}
\DeclareMathOperator*{\esssup}{ess\,sup}
\def\namedlabel#1#2{\begingroup
    #2%
    \def\@currentlabel{#2}%
    \phantomsection\label{#1}\endgroup
}
\newcommand{\beqar}{\begin{eqnarray*}}
\newcommand{\eeqar}{\end{eqnarray*}}
\newcommand{\beqarl}{\begin{eqnarray}}
\newcommand{\eeqarl}{\end{eqnarray}}
\newcommand{\be}{\begin{equation}}
\newcommand{\ee}{\end{equation}}
\newcommand{\xin}{X^{i,N}}
\newcommand{\oin}{V^{i,N}}
\newcommand{\bxin}{\bar{X}^{i,N}}
\newcommand{\boin}{\bar{V}^{i,N}}
\newcommand{\lp}{\left(}
\newcommand{\rp}{\right)}
\def\signmb{\bigskip \begin{center} {\sc
Marc Briant\par\vspace{3mm}
Universit\'e de Paris,\par 
MAP5, CNRS, \par 
F-75006 Paris, France \par
\vspace{3mm}
e-mail:} \tt{briant.maths@gmail.com} \end{center}}
\def\signsm{\bigskip \begin{center} {\sc
Sara Merino Aceituno\par\vspace{3mm}
Faculty of Mathematics, University of Vienna,\par
Oskar-Morgenstern-Platz 1, 1020 Vienna, Austria\par
Department of Mathematics, University of Sussex\par
Brighton, BN1 9RH, United Kingdom\par
\vspace{3mm}
e-mail:} \tt{sara.merino@univie.ac.at} \end{center}}
\def\signad{\bigskip \begin{center} {\sc
Antoine Diez\par\vspace{3mm}
Department of Mathematics, Imperial College London,\par
\par
South Kensington Campus\par
London SW7 2AZ, UK\par
\vspace{3mm}
e-mail:} \tt{antoine.diez18@imperial.ac.uk} \end{center}}
\begin{document} 

\title[General Vicsek models: particle and kinetic equations]{Cauchy theory  for general kinetic Vicsek models in collective dynamics and  mean-field limit approximations}
\author{Marc Briant, Antoine Diez, Sara Merino-Aceituno}

\begin{abstract}
In this paper we provide a local Cauchy theory both on the torus and in the whole space for general Vicsek dynamics at the kinetic level. We consider rather general interaction kernels, nonlinear viscosity and nonlinear friction. Particularly, we include normalised kernels which display a singularity when the flux of particles vanishes. Thus, in terms of the Cauchy theory for the kinetic equation, we extend to more general interactions and complete the program initiated in \cite{GamKan} (where the authors assume that the singularity does not take place) and in \cite{FigKanMor} (where the authors prove that the singularity does not happen in the space homogeneous case). 
Moreover, we derive an explicit lower time of existence as well as a global existence criterion that is applicable, among other cases, to obtain a long time theory for non-renormalised kernels and for the original Vicsek problem without any \textit{a priori} assumptions. On the second part of the paper, we also establish the mean-field limit in the large particle limit for an approximated (regularized) system that coincides with the original one whenever the flux does not vanish. Based on the results proved for the limit kinetic equation, we prove that for short times, the probability that the dynamics of this approximated particle system coincides with the original singular dynamics tends to one in the many particle limit.

\end{abstract}

\maketitle

\vspace*{10mm}

\textbf{Keywords:} Vicsek model; Vicsek-Kolmogorov equation; collective dynamics; nonlinear Fokker-Planck equation on the sphere; normalized interaction kernels; mean-field limit, well-posedness.


\bigskip

\textbf{Acknowledgements.}
The work of A. Diez is supported by an EPSRC-Roth scholarship cofounded by the  Engineering and Physical Sciences Research Council and the Department of Mathematics at Imperial College London.
S. Merino-Aceituno is supported by the Vienna Science and Technology Fund (WWTF) with a Vienna Research
Groups for Young Investigators, grant VRG17-014.\\
The authors also thank Amic Frouvelle for useful discussions.

\tableofcontents

\section{Introduction} \label{sec:intro}

\subsection{Motivation}

The emergence of collective motions among a group of  self-propelled agents is receiving a great deal of attention: flocks of birds \cite{hildenbrant}, schools of fish \cite{hemelrijk2012schools}, pedestrian dynamics \cite{maury2010macroscopic}, micro-swimmers \cite{david2015mass}. Many models have been proposed to explain these collective behaviours. Among them, the Vicsek model \cite{degond2008continuum,vicsek1995novel} is one of the most studied ones. In this model all agents move  at a constant speed while trying to adopt the same velocity as their neighbours, up to some noise.  

In this introductory section we present the discrete dynamics for the Vicsek model as presented in \cite{degond2008continuum} as well as its corresponding kinetic equation for the time-evolution of the distribution of the particles. Up to now, there were no complete rigorous results on the existence of solutions and the derivation of the kinetic equation (mean-field limit) for the Vicsek dynamics presented in \cite{degond2008continuum}. The reason for this is the presence of a singularity in the dynamics that is reached when the local average velocity of the agents vanishes.  

In contrast to \cite{degond2008continuum}, one can define a Vicsek model without a singularity \cite{BolCanCar2,DegFroLiu3,DegFrouLiu2,FrouLiu}, for different modelling choices. However, these
 different modelling choices for the Vicsek model have profound mathematical implications (like appearance of phase transitions). For this reason, in this article we investigate a general form of the Vicsek model that includes the forms presented in \cite{degond2008continuum} (with a singularity), in \cite{BolCanCar2,FrouLiu} (without singularity) 
as well as further extensions of the model like the ones
 in \cite{BriMeuNav,
DegFrouLiu2,degond2015multi,frouvelle2012continuum}. We give conditions to have existence of solutions for the equations  and investigate approximations to the mean-field limit for this general class of Vicsek models. All of these are detailed in the following sections.

\subsection{Particular case: Vicsek model as in \cite{degond2008continuum} and \cite{BolCanCar2,FrouLiu}}
  In what follows we assume that the agents move in a domain $\Omega$ with $\Omega$ being either the $d$-dimensional torus $\T^d$ or the whole space $\R^d$ for $d\geq 1$. The orientations of the agents are given elements on the sphere $\S^{d-1}$. Throughout the article, for a given vector $\omega\in\R^d$ we denote $\mathbf{P}_{\omega^\bot}$ the orthogonal projection in $\R^d$ onto $\pa{\R\omega}^\bot$. The time-continuous Vicsek model \cite{degond2008continuum} considers $N$ agents characterised by their positions $X^{i,N}(t)\in \Omega$, $i=1,\hdots, N$ and orientations $\omega^{i,N}(t)\in \mathbb{S}^{d-1}$ over time $t\geq 0$. The evolution of the system is given by the following Stratonovich stochastic differential equation, where $c, \nu,\sigma>0$ are positive constants:
\beqarl
dX^{i,N} &=& c \omega^{i,N} \, dt, \label{eq:Vicsek_X}\\
d\omega^{i,N}&=& \nu \nabla_{\omega^{i,N}} (\omega^{i,N} \cdot \bar \omega_i) \, dt + \mathbf{P}_{(\omega^{i,N})^\perp}\circ \sqrt{2\sigma}dB_t^i. \label{eq:Vicsek_omega}
\eeqarl
We explain next this system of equations. Eq. \eqref{eq:Vicsek_X} describes the transport of the agents: agent $i$ moves in the orientation $\omega^{i,N}$ at speed $c>0$. Eq. \eqref{eq:Vicsek_omega} gives the evolution of the orientations over time. It includes two competing forces. On one hand the first term in the form of a gradient represents organised motion: agents try to adopt the same orientation.  The gradient $\nabla_\omega$ is the gradient on the sphere and the term $\bar \omega_i$ represents the average orientation of the neighbours around agent $i$. Therefore, the first term in the right-hand-side of Eq. \eqref{eq:Vicsek_omega} is a gradient flow which relaxes the value of the orientation $\omega^{i,N}$ towards the mean orientation of the neighbouring particles $\bar \omega_i$. The constant $\nu>0$ gives the speed of this relaxation. We will comment later on the different choices to compute the average orientation $\bar \omega_i$. These different choices give rise to the different models in \cite{degond2008continuum} and \cite{BolCanCar2,FrouLiu}.

On the other hand $B^i_t$, $i=1,\hdots, N$ are $N$ independent Brownian motions in $\R^d$ and they introduce noise in the dynamics, driving particles away from organised motion. The constant $\sigma>0$ gives the intensity of the noise. The symbol $'\circ'$ is used to specify that  Eq. \eqref{eq:Vicsek_omega} has to be understood in the Stratonovich sense. This ensures that $\omega_i(t)\in \mathbb{S}^{d-1}$ for all times where the solution is defined (this will be proven later).

Now, formally at least, one can compute the time-evolution for the distribution of agents $f=f(t,x,\omega)$ in space $x\in \Omega$ and orientations $\omega\in \mathbb{S}^{d-1}$ at time $t\geq 0$ as the number of particles $N\to \infty$  \cite{degond2008continuum}. The dynamics for the distribution $f$ is given by the following kinetic equation:
\be 
\label{eq:Vicsek_kinetic}
\partial_t f + c\omega\cdot \nabla_\omega f = \nabla_\omega \cdot \left[- \nu \mathbf{P}_{\omega^\perp}\bar\omega_f\, f + \sigma \nabla_\omega f\right] =: L(f),
\ee
where $\nabla_\omega \cdot$ denotes the divergence on the sphere $\mathbb{S}^{d-1}$ and where $\bar\omega_f(t,x)$ represents the average orientation of the particles around position $x$ at time $t$. Notice that the projection term appears due to the fact that
$$\nabla_\omega (\omega\cdot \bar \omega ) = \mathbf{P}_{\omega^\perp}\bar \omega, \qquad \mbox{ for }\bar \omega \in \R^d \mbox{ fixed}.$$
One can show that the operator $L$ on the right-hand-side of \eqref{eq:Vicsek_kinetic} can be rewritten as a non-linear Fokker-Planck operator:
\be \label{eq:Fokker-Planck}
L(f):= \sigma\nabla_\omega \cdot \left[ M_{\bar\omega_f}\nabla_\omega \lp \frac{f}{M_{\bar\omega_f}}\rp\right],
\ee
with
\be \label{eq:von-Mises}
M_v(\omega) = \frac{\exp(\frac{\nu}{\sigma}(\omega\cdot v))}{Z}, \qquad Z = \int_{\mathbb{S}^{d-1}}\exp\lp \frac{\nu}{\sigma}(\omega\cdot v)\rp\, d\omega,
\ee
for any $v\in \mathbb{S}^{d-1}$. The function $M_v$  is a probability density on the sphere named von-Mises distribution. Notice that its mean is in the orientation given by $v$. When $\sigma \to \infty$ (large-noise limit) the von-Mises distribution converges to the uniform distribution on the sphere.

\bigskip
We comment now on the different choices to define the average orientation around an agent $i$, denoted by $\bar\omega_i$ in Eq. \eqref{eq:Vicsek_omega}. The choice considered has profound implications in the dynamics of the particles and in the derivation of the kinetic equation \eqref{eq:Vicsek_kinetic} (mean-field limit) and the derivation of macroscopic equations (equations for the particle density and mean orientation). In the mathematics literature, two options were first considered given, in a compact way, by:
\be
\bar \omega_i = \frac{J^N_i}{\alpha + (1-\alpha) |J^N_i|}, \qquad J^N_i(t) = \sum_{j\neq i}^NK(|X_i(t)-X_j(t)|)\, \omega_j(t). \label{eq:Vicsek_average}
\ee
The two options correspond to the cases $\alpha=0$ (as presented in \cite{degond2008continuum}) or $\alpha=1$ (as presented in \cite{BolCanCar2,FrouLiu}). The kernel $K\geq 0$ is an interaction kernel that represents the weights given to the neighbouring particles depending on their distance to particle $i$. A classical choice is to take $K$ the indicator function of a ball of radius $R>0$. The flux $J^N_i$ is, indeed, an average of the orientations of the neighbouring particles. In the kinetic equation these choices define the operator $\bar\omega_f$ as:
\be \label{eq:average_kinetic}
\bar \omega_f(t,x) =\frac{\mathbf{J}_f(t,x)}{\alpha + (1-\alpha)|\mathbf{J}_f(t,x)|}, \qquad \mathbf{J}_f(t,x) = \int_{\Omega\times\mathbb{S}^{d-1}} K(|x-y|)\, \omega f(t,y,\omega)\, dyd\omega.
\ee

\medskip

\noindent We compare next, the mathematical implications of these two choices:

\noindent\textbf{Case $\alpha=1$ (non-normalisation).} If $\alpha=1$, the average orientation $\bar\omega_i ~=~ J_i^N~\notin~\mathbb{S}^{d-1}$ is not a unit vector. This also holds in the kinetic equation \eqref{eq:Vicsek_kinetic} for $\bar\omega_f = \mathbf{J}_f\notin \S^{d-1}$ in \eqref{eq:average_kinetic}.  However, the case $\alpha =1$ removes the singularity when $|\mathbf{J}_f|=0$ or $|J^N_i|=0$. In \cite{FrouLiu} the authors prove the well-posedness of the space-homogeneous kinetic equation in any Sobolev space and in \cite{BolCanCar1}  the authors prove the mean-field limit (here we will recover these results). As a counterpart, though, this choice for the average $\bar \omega_i$ makes the derivation of macroscopic equations for the density of the particles $\rho=\rho(t,x)$ and the mean orientation $\bar \omega=\bar\omega(t,x)$ of the agents  more complex than in the case $\alpha=0$.
Specifically, two equilibria exist for the mean particle orientation $\bar \omega$ depending if $\mathbf{J}_f \neq 0$ or $\mathbf{J}_f=0$ \cite{DegFroLiu3,DegFrouLiu2,FrouLiu}.  In loose terms, if $\mathbf{J}_f\neq 0$,
 then the von-Mises equilibria $M_{\mathbf{J}_f}$ is an equilibrium. The other equilibrium is given by the uniform distribution on the sphere, which corresponds to the case $\mathbf{J}_f=0$ in $M_{\mathbf{J}_f}$.  In different spatial regions (depending on the particle density $\rho$) one of these two equilibria is stable.  Consequently, this gives rise to a bifurcation or phase transition: in some spatial regions  the mean orientation of the agents is $\bar \omega =0$ (corresponding to disordered dynamics) and in other spatial regions the mean orientation is given by a unit vector $|\bar\omega|=1$ (corresponding to ordered dynamics or flocking). The interested reader can find all the details in \cite{BriMeuNav,DegFroLiu3,DegFrouLiu2,FrouLiu}. The presence of phase transitions enriches the dynamics, in the sense that allows for a wider variety of patterns to arise. Understanding phase transitions is also of great interest in the Physics community \cite{vicsek1995novel}, who was the first one to introduce the Vicsek model. In particular, pattern formation has been studied through the simulation of particle simulations for the Vicsek model and its variations. Band formation arises under some parameter conditions \cite{caussin2014emergent,chate2008modeling,chate2008collective}. Phase transitions could be key to explaining the emergence of this band formation.
\medskip

\noindent \textbf{Case $\alpha=0$ (normalisation).} The first mathematical works on the Vicsek model correspond to this  case \cite{degond2008continuum}. The choice $\alpha=0$ comes with the difficulty of dealing with singularities when the flux $J^N_i =0$ (another example of collective dynamics with singularities, different from the Vicsek model can be found in \cite{poyato2019filippov} for the Kuramoto model). However, assuming that the flux $\mathbf{J}_f$ in the kinetic equation does not vanish along the dynamics, then the macroscopic equations can be obtained more easily than in the case $\alpha=1$ because there is a unique equilibria: the von-Mises distribution in \eqref{eq:von-Mises}. Then the macroscopic equations for the particle density $\rho=\rho(t,x)$ and mean orientation $\bar \omega=\bar \omega(t,x)\in \mathbb{S}^{d-1}$ correspond to the Self-Organised Hydrodynamics (SOH) given in \cite{degond2008continuum}. This was the first formal derivation of the SOH dynamics. See also \cite{jiang2016hydrodynamic,zhang2017local} as well as \cite{jiang2020coupled} for later rigorous results. In this scenario a rigorous mean-field limit to derive the kinetic equation was missing as well as a Cauchy theory for the particle dynamics and the kinetic equation. Here we will not prove the mean-field limit starting from a particle system of the form \eqref{eq:Vicsek_X} -\eqref{eq:Vicsek_omega}, but from a modified system that we term `approximated particle system'. This system does not have a singularity when the flux vanishes and therefore, it can be thought of as a regularization of the original particle dynamics. Proving the mean-field limit for particle systems with non-regular coefficients or with singularities is in general a difficult problem and, to the authors knowledge, only very few results exist and focus on specific problems (see e.g. \cite{jabin2017mean}).  Here, we will  prove that with a probability which tends to one in the many-particle limit, for short times, solutions of the approximated particle system are also solutions of the Vicsek particle system.
We investigate these questions in the present article:

\begin{theorem}[Cauchy theory for general Vicsek models with normalisation and mean-field from approximated particle dynamics]
Suppose that $\alpha =0$ (normalised case). Suppose that the kernel $K$ is Lipschitz, bounded and that $f(0,x,\omega)$  satisfies assumptions (i) and (ii) in Theorem \ref{theo:local cauchy}. Then, there exists a unique local-in-time solution to the kinetic equation \eqref{eq:average_kinetic}. Moreover, the kinetic equation equation \eqref{eq:average_kinetic} can be obtained as the mean-field limit of an `approximated' particle dynamics whose solutions are, with a probability which tends to one in the many-particle limit, also solutions to the particle Vicsek dynamics \eqref{eq:Vicsek_X}-\eqref{eq:Vicsek_omega}.
\end{theorem}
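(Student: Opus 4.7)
The plan is to deduce this statement as a specialization of the more general results proved earlier in the paper (Theorem \ref{theo:local cauchy} for the kinetic side, together with the companion mean-field result) applied to the particular nonlinearity $F(\mathbf{J})=\mathbf{J}/|\mathbf{J}|$. The decisive observation is that although this map is singular at the origin, it is $C^\infty$ on $\R^d\setminus\{0\}$ and Lipschitz on the complement of any ball around zero; hence, as long as the local flux $\mathbf{J}_f(t,x)$ stays uniformly bounded away from zero, the normalized model falls entirely inside the abstract non-singular framework of Theorem \ref{theo:local cauchy}.

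\textbf{Step 1 (Cauchy theory).} Hypothesis (ii) of Theorem \ref{theo:local cauchy} should encode a uniform strictly positive lower bound on $|\mathbf{J}_{f_0}(x)|$, which guarantees that $\bar\omega_{f_0}$ is well-defined. I would first verify that the pair $\pa{F(\mathbf{J})=\mathbf{J}/|\mathbf{J}|,\,K}$, with $K$ Lipschitz and bounded, satisfies the abstract regularity assumptions of that theorem on the open set where $|\mathbf{J}|$ is bounded below; this is a direct check using the smoothness of $F$ off the origin. The general theorem then produces a unique local-in-time solution $f\in C([0,T];X)$ on some explicit time interval. A continuity argument, using that $\mathbf{J}_f$ depends continuously on $f$ in the chosen norm, then ensures that $\inf_x |\mathbf{J}_f(t,x)|$ remains strictly positive on $[0,T']$ for some $0<T'\leq T$, possibly after shrinking the interval given by Theorem \ref{theo:local cauchy}; on that shorter interval $f$ solves the original singular equation \eqref{eq:Vicsek_kinetic}--\eqref{eq:average_kinetic}.

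\textbf{Step 2 (Approximated particle system and mean-field limit).} Introduce a regularized $N$-particle SDE in which $J_i^N/|J_i^N|$ is replaced by a smooth vector field $\Phi_\delta(J_i^N)$ that agrees with $\mathbf{J}/|\mathbf{J}|$ on $\br{|\mathbf{J}|\geq \delta}$ and is globally Lipschitz. This system is globally well-posed for every fixed $N$. I would then derive its mean-field limit through a synchronous coupling to $N$ independent copies of the nonlinear McKean process driven by \eqref{eq:Vicsek_kinetic}, closing the estimate by Grönwall on the quadratic coupling error — the Lipschitz constant of $\Phi_\delta$ and of $K$ providing the necessary control. This gives convergence in Wasserstein distance of the empirical measure to $f$.

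\textbf{Step 3 (Coincidence of the two particle dynamics).} The last assertion — that with probability tending to one the approximated trajectories actually solve the singular Vicsek system \eqref{eq:Vicsek_X}--\eqref{eq:Vicsek_omega} on $[0,T']$ — reduces to showing that $\min_i \inf_{t\in[0,T']}|J_i^N(t)|\geq \delta$ with probability $1-o_N(1)$, since on this event $\Phi_\delta(J_i^N)=J_i^N/|J_i^N|$. By Step 1 the kinetic flux $\mathbf{J}_f$ is bounded below by some $c_0>0$ on $[0,T']$, so this comparison follows from a concentration estimate between the empirical flux $J_i^N$ and $\mathbf{J}_f(t,X^{i,N}(t))$, obtained by combining the coupling bound from Step 2 with Markov's inequality, and by choosing $\delta=\delta_N\to 0$ slower than the mean-field rate of convergence. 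The main obstacle I anticipate is making this concentration uniform in $t\in [0,T']$ and in the particle index $i$ simultaneously; this will require a supremum-in-time bound on the coupling error (likely obtainable from Doob's inequality applied to the martingale part of the coupled SDEs) and a tuning of $\delta_N$ so that the probabilistic error remains $o(1)$ while $\delta_N<c_0/2$ for large $N$.
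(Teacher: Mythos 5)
Your architecture is the same as the paper's: a local fixed-point theory for the kinetic equation on a time interval where the flux stays away from zero, a regularized particle system with Lipschitz coefficients, a Sznitman synchronous coupling for the mean-field limit, and a probabilistic estimate showing the regularized and singular particle dynamics coincide with probability tending to one. Two points in your sketch, however, are materially weaker than what the paper does and one of them would fail as literally stated. First, in Step 1 you propagate the lower bound on $\inf_x|\mathbf{J}_f(t,x)|$ by a soft continuity argument after the fact; the paper instead proves a quantitative differential inequality (Proposition \ref{prop:positivity}): testing the linear equation against $K_i$ and using mass conservation gives $\abs{\mathbf{J}(t,x,\omega)}\geq J_0-K_\infty M_0 t$, and this bound is built \emph{into} the fixed-point set $\Gamma^M_T$ rather than checked afterwards. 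This is not cosmetic: the explicit constants $T_1$ in \eqref{T1} and $c_*=J_0-K_\infty M_0T$ in \eqref{eq:defcstart} produced this way are exactly what is consumed in the particle part (Theorem \ref{th:NormalizedCase}), whereas a purely qualitative continuity argument yields a positive lower bound with no handle on it. Second, in Step 3 your plan to take $\delta=\delta_N\to 0$ ``slower than the mean-field rate of convergence'' presupposes a quantitative rate $\eps(N)$; under the paper's hypotheses the convergence $\E[W_2(\bar\mu^N,f)^2]\to 0$ comes from Lemma \ref{lem:law_large_number} and carries no rate, so this tuning cannot be performed. It is also unnecessary: the statement only requires a \emph{fixed} regularization threshold $\eps_0\in(0,c_*)$, and then Markov's inequality applied to $\sup_{t,x,v}\abs{\mathbf{J}(t,x,v,\mu^N_t)-\mathbf{J}(t,x,v,f_t)}\leq C(\mathbf{K})\sup_t W_1(\mu^N_t,f_t)$ gives the bound $1-\eps(N)/(c_*-\eps_0)$ of \eqref{eq:highprob} directly, uniformly in $(x,v)$ (which is cleaner than your per-particle comparison of $J^N_i$ with $\mathbf{J}_f(t,X^{i,N}_t)$). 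Since you yourself impose $\delta_N<c_0/2$, your argument degenerates gracefully to the fixed-threshold version, so this is a repairable detour rather than a fatal gap; but as written Step 3 does not close. Finally, note that Step 2 glosses over the sphere constraint: the paper must regularize the projection $\mathbf{P}_{v^\perp}$ and the Itô--Stratonovich correction term $v/|v|^2$ (the maps $\tau_1,\tau_2$) and track the extra drift coming from non-constant $\sigma$ before the standard Lipschitz well-posedness and coupling theorems apply; for the classical Vicsek model with constant $\sigma$ this is mild but should be said.
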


The precise definition of `solution', of the approximated particle system and what we mean with `a probability which tends to one in the many-particle limit' will be explained in Th. \ref{theo:local cauchy} and in Sec. \ref{subsec:particles intro}, respectively. 
As mentioned before, the difficulty in proving the previous theorem is to show that there exists at least some time interval $t\in[0,T]$ such that $\mathbf{J}_f\neq 0$ and that for $N$ large enough the probability that $|J^N_i|>0$ goes to 1 as $N\to\infty$ for all $i=1,\hdots,N$ and $t\in[0,T]$. To prove a lower bound in the flux $\mathbf{J}_f\neq 0$ is, therefore, key. Indeed, in  \cite{GamKan} the authors manage to prove well-posedness for the kinetic equation \eqref{eq:Vicsek_kinetic} assuming precisely that \textit{a priori} all solutions satisfy $|\mathbf{J}_f|(t,x,\omega)\geq a >0$ for all times, positions and orientations. In \cite{FigKanMor} the authors prove well-posedness in the space homogeneous setting as long as initially $|\mathbf{J}_{f_0}|>0$ (plus some other assumptions). This has been followed by  further extensions  in \cite{kang2016dynamics}. The space-homogeneous case corresponds to spatially local kernels, i.e., $K(x-y)=\delta_0(x-y)$ (delta distribution).

\medskip
To conclude this part, the choice on how to define the average orientation $\bar\omega_i$ has profound mathematical and modelling implications (like dealing with singularities or with phase transitions). For this reason, in this article we will consider a general class of Vicsek models to encompass different modelling choices - existing or yet to be developed. 

\subsection{General forms of the Vicsek model considered}
As we saw in the previous section, modelling choices in the Vicsek model may produce important differences in the mathematical properties of the equations. For this reason, in this article we consider a  wide class of Vicsek models based not only on the choice of the averaged orientation \eqref{eq:Vicsek_omega} but also on the particular shape of the friction $\nu$, the viscosity $\sigma$ and the interaction kernel $K$.  Specifically,  we will allow $\nu$ and $\sigma$ to be functions of $f$ (the solution to the kinetic equation) at the kinetic level and functions of the empirical distribution (see Eq. \eqref{eq:empirical_distribution})
at the particle level. The interaction kernel $K$ can take the general form $K=K(t,x,x_*, \omega, \omega_*)$. The precise assumptions on $\nu$, $\sigma$ and $K$ can be found in Section \ref{sec:functional_framework}.

As a consequence, the results presented here apply to a wide breath of Vicsek-types models, like the ones in \cite{DegFroLiu3,
DegFrouLiu2,degond2015multi,frouvelle2012continuum}.  In \cite{DegFroLiu3,DegFrouLiu2} the authors consider functions $\nu=\nu(|\mathbf{J}_f|)$, $\sigma = \sigma(|\mathbf{J}_f|)$ that depend on the flux of the particles; in \cite{frouvelle2012continuum} the author considers a kernel $K$ that is not isotropic; in \cite{degond2015multi} the authors consider a Vicsek-type model for alignment of discs. Beyond this, the approach presented here can be  adapted to investigate other models like the ones in \cite{BriMeuNav,degond2019phase,degond2014hydrodynamics,
DegFrouMerTre,
degond2017alignment,
degond2017hui,degond2018age,
degond2019nematic,
degond2019coupled,
degond2011macroscopic,navoret2013two}. In \cite{degond2014hydrodynamics}  the authors couple the Vicsek model with a Kuramoto model. In \cite{navoret2013two} the author considers a Vicsek model with two species having different velocities. The models in \cite{BriMeuNav,DegFrouMerTre,degond2017hui,degond2018age,degond2019nematic} describe  collective motion based on nematic alignment (where particles align by adopting the same direction of motion and not necessarily the same orientation).  BGK versions of Vicsek-type models are considered in  \cite{degond2019phase,degond2017alignment}. In  \cite{degond2019coupled}  the authors couple the Vicsek model with Stokes equations to model micro-swimmers. Finally, in \cite{degond2011macroscopic} a model for the persistent turning walker with curvature `alignment' is presented.

\subsection{Aims of the paper}
In this paper, for a general form of the Vicsek model, we aim at:
\begin{itemize}
\item[(i)] establishing the well-posedness, at least locally in time, for the kinetic model (Theorem \ref{theo:local cauchy}); existence of solutions will be proven in Lebesgue spaces;
\item[(ii)] rigorously proving that the kinetic equation can be derived as the mean-field limit of some `approximated' agent-based dynamics  in the limit $N \to +\infty$ (Theorems \ref{th:existence_solutions}, \ref{th:convergence_process}, \ref{cor:mean-field}). Also show  that  with a probability which tends to one in the many-particle limit realizations of the `approximated' particle system are solutions of the original Vicsek system (Theorem \ref{th:NormalizedCase});
\item[(iii)] deriving a criterion that characterises the global-in-time well-posedness of these systems (points (a) and (b) in Theorem \ref{theo:local cauchy}) and apply it to several interesting cases (Section \ref{subsec:global examples}). Along the way we shall also open the discussion to long-time behaviour and construct a free energy but we shall not investigate further (Section \ref{subsec:convergence to equilibrium}).
\end{itemize}
We aim at giving constructive proofs rather than weak-compactness arguments and at working in  Lebesgue spaces rather than more regular Sobolev spaces. Our strategy is thus to prove a fixed point contraction theory for the kinetic equation \eqref{eq:general kinetic orthogonal} and, to prove the mean-field limit, using  a coupling approach popularised by Sznitmann \cite{BolCanCar2,carmona2016lectures,chaintron2021propagation,sznitman1991topics}. 

\subsection{Structure of the paper}

The paper is structured as follows.
Section \ref{sec:results} describes our main results, first in the kinetic framework in Section \ref{sec:results_kinetic} and second for the particle dynamics and its mean-field limit in Section \ref{subsec:particles intro}. The proofs for the results in Section \ref{sec:results_kinetic} are given in Section \ref{sec:kinetic} and the proofs of the results in Section \ref{subsec:particles intro} are given in Section \ref{sec:particles}.  In the Appendix \ref{sec:carmona} the reader can find known results on stochastic differential equations and mean-field limits that are used in Section \ref{sec:particles} and that have been added here for the sake of completeness. The results in Appendix \ref{sec:carmona} are mainly from \cite{carmona2016lectures}.

\section{Main results and strategy}
\label{sec:results}
\subsection{Functional framework and notations}
\label{sec:functional_framework}

First let us give some notations for functional spaces. We shall work on Lebesgue spaces indexed by the variable into consideration: for any $p$ in $[1,+\infty)$ we denote
$$\norm{f}_{L^p_\omega} = \pa{\int_{\S^{d-1}}\abs{f(\omega)}^pd\omega}^{\frac{1}{p}} \quad\mbox{and}\quad \norm{f}_{L^p_x} = \pa{\int_{\Omega}\abs{f(x)}^pdx}^{\frac{1}{p}}$$
and for a time-dependent function for any $T>0$
$$\norm{f}_{L^p_t} = \pa{\int_{\R^+}\abs{f(t)}^pdt}^{\frac{1}{p}}\quad\mbox{and}\quad \norm{f}_{L^p_{[0,T)}} = \pa{\int_{0}^T\abs{f(t)}^pdt}^{\frac{1}{p}}.$$
Finally we denote the several variable Lebesgue spaces for any $q$ and $r$ in $[1,+\infty)$
$$\norm{f}_{L^r_{[0,T)}L^q_xL^p_\omega} = \pa{\int_0^T\pa{\int_{\Omega}\norm{f(t,x,\cdot)}_{L^p_\omega}^qdx}^{\frac{r}{q}}dt}^{\frac{1}{r}}$$
with direct modifications for $p$, $q$ or $r$ being $+\infty$. Note that when two indexes are the same we shall use the short-hand notation $L^p_xL^p_\omega = L^p_{x,\omega}$. \newline

For the mean-field limit results, we will consider the space $\mathcal{H}=(\Omega \times \R^d \times \mathcal{P}_2(\Omega\times \R^d)$), where $\mathcal{P}_2(\Omega\times \R^d)$ is the space of probability measures in $\Omega\times \R^d$ with finite second-order moment, i.e., $\mu \in \mathcal{P}_2(\Omega \times \R^d)$ if it fulfils
$$\int_{\Omega \times \R^d} |z|^2 \, \mu(dz) \, <\infty.$$
For $\mu, \mu'\in \mathcal{P}_2 (\Omega \times \R^d)$, the 2-Wasserstein distance is given by
\beqarl
W_2(\mu, \mu') &=& \inf\Big\{\left[\int_{(\Omega\times\R^{d})^2}|z-z'|^2 \, \pi(dz, dz')\right]^{1/2} \,;\\
&& \qquad \qquad \quad \pi\in \mathcal{P}(\Omega\times \R^{d})^2 \mbox{ with marginals } \mu \mbox{ and }\mu'\Big\}. \label{eq:wasserstein_distance}
\eeqarl
The distance $W_2$ induces the topology of weak convergence of measures and the convergence of all the moments of order up to 2 \cite{carmona2016lectures} (see also \cite[p. 83]{villani2008optimal}).
The space $\mathcal{H}$ is a metric space with the distance $d_\mathcal{H}$ given by 
$$d_{\mathcal{H}}\Big((x,v,m), (y,w,p) \Big) = |x-y|+|v-w|+ W_2(m, p).$$

\bigskip

\noindent The hypotheses we shall make on the nonlinearities are the following.

\renewcommand{\labelenumi}{(H\theenumi)}
\begin{enumerate}
\item The interaction kernel $ \mathbf{K}$  is  Lipschitz, regular and bounded in all the variables. More precisely, it is assumed that $\mathbf{K}(t,x,x_*,\omega,\omega_*) \in W^{1,\infty}_{t,x_*}W^{2,\infty}_{\omega_*}L^\infty_{x,\omega}$, and in the case $\Omega = \mathbb{R}^d$ we further assume that $\mathbf{K} \in L^\infty_{t,x,\omega}L^2_{x_*,\omega_*}$; \label{HK}
\item The viscosity satisfies that $\func{\sigma}{L^2_{[0,T],x,\omega}}{L^\infty_{[0,T],x,\omega}}$ is bounded from below and Lipschitz uniformly for any $T>0$ in the sense there exists $\sigma_0$, $\sigma_\infty$ and $\sigma_{\tiny{lip}}$ such that for any $T>0$:
\begin{equation*}
\begin{split}
\forall (f,g) \in \pa{L^2_{[0,T],x,\omega}}^2,\quad & 0<\sigma_0 \leq \sigma(f)\leq \sigma_\infty
\\& \norm{\sigma(f)-\sigma(g)}_{L^\infty_{[0,T],x,\omega}} \leq \sigma_{\mbox{\tiny{lip}}}\norm{f-g}_{L^2_{[0,T],x,\omega}}.
\end{split}
\end{equation*}\label{Hsigma}
\item The friction is local in time and Lipschitz that is $\nu(f)(t,x,v) = \nu(f(t,\cdot,\cdot))(x,v)$ where $\func{\nu}{L^2_{x,\omega}}{L^\infty_{x}W^{1,\infty}_\omega}$:
\begin{equation*}
\begin{split}
\forall (f,g) \in \pa{L^2_{x,\omega}}^2,\quad & \norm{\nu(f)}_{L^\infty_{x}W^{1,\infty}_\omega}\leq \nu_\infty
\\& \norm{\nu(f)-\nu(g)}_{L^\infty_{x,\omega}} \leq \nu_{\mbox{\tiny{lip}}}\norm{f-g}_{L^2_{x,\omega}}.
\end{split}
\end{equation*} \label{Hnu}
\item For the mean-field limit results we will assume further that $\sigma$, $\nu$ and $\nabla_\omega \sigma$ are Lipschitz and bounded in  $W_2$. 
 \label{Hmeanfield}
\end{enumerate}

\bigskip
\begin{remark}
\begin{itemize}
\item 
Note that $(H\ref{Hsigma})$ and $(H\ref{Hnu})$ are satisfied when $\sigma$ and $\nu$ do not depend on $f$, which is the case in most models so far. Ultimately, one would want $\sigma(f)$ to be local in time and Lipschitz, like $\nu(f)$. We think that this could be achieved with standard parabolic regularity methods for more regular initial data and $\sigma(f)$. The main issue for closing a fixed point argument is the speed of convergence to $0$ of the gain of regularity of the solutions $f$ that we did not manage to quantify, see Proposition \ref{prop:dependence coef} and Remark \ref{rem:dependence coef}.
\item 
In applications, typically  the functions $\nu=\nu(t,x,\omega,f)$ and $\sigma=\sigma(t,x,\omega,f)$ are of the form
\begin{equation}
\label{eq:typical_form_nu}
\nu(t,x,\omega,f) = \int_{\Omega \times \mathbb{S}^{d-1}}\tilde{\nu}(t,x,\omega, x_*,\omega_*)\, f(t, x_*, \omega_*)\, dx_* d\omega_*,
\end{equation}
for some function $\tilde \nu$, and analogously for $\sigma$. For example, in \cite{degond2008continuum} the authors consider
$$\nu(t,x,\omega,f) = \omega \cdot \bar \omega_f(t,x), $$
with
\begin{equation} \label{eq:aux_example_flux}
 \bar \omega_f(t,x) = \frac{\tilde{\mathbf{J}}_f}{|\tilde{\mathbf{J}}_f|}(t,x), \quad \tilde{\mathbf{J}}_f(t,x)= \int_{\Omega\times \mathbb{S}^{d-1}}K(x-x_*,\omega_*) f(t,x_*,\omega_*) dx_* d\omega_*.
 \end{equation}
 And in \cite{DegFrouLiu2} the authors consider $\nu$ and $\sigma$ of the form
 $$\nu= \nu(t,x,f) =\nu(|\tilde{\mathbf{J}}_f|(t,x)),$$
 with $\tilde{\mathbf{J}}_f$ given by \eqref{eq:aux_example_flux}.

\end{itemize}
\end{remark}

\subsection{Kinetic point of view: strategy and results}
\label{sec:results_kinetic}

We will consider the following kinetic equation for the distribution function $f=f(t,x,\omega)$ for $(t,x,\omega) \in \R_+\times \Omega\times \R^d$ (with $\Omega$ either the $d$-dimensional torus $\T^d$ or the full space $\R^d$):
\begin{equation}\label{eq:general kinetic orthogonal}
\begin{split}
\partial_t f + c \omega\cdot\nabla_x f &= \nabla_\omega \cdot\pa{\sigma(f) \nabla_\omega f} +\nabla_\omega\cdot\pa{\nu(f)f\mathbf{P}_{\omega^\bot}\cro{\boldsymbol\Psi[f]}},
\end{split}
\end{equation}
where $\nabla_\omega$, $\nabla_\omega\cdot$ are the gradient and divergence operators on the sphere; $c>0$ is a positive constant and $\nu, \sigma, \boldsymbol \Psi$ are given functions.

\begin{remark}
Notice that we allow the function $\nu(f)<0$. For example, in the kinetic equation for the Vicsek model in Eq. \eqref{eq:Vicsek_kinetic} this function corresponds to $-\nu$ for some constant $\nu$ strictly positive.
\end{remark}

\bigskip
\par There exists an interaction kernel $\func{\mathbf{K}}{\R^+\times\Omega^2\times\pa{\S^{d-1}}^2}{\R^d}$ that defines the flux $\boldsymbol J$ as
\begin{equation} \label{eq:flux_definition}
\boldsymbol J[f] (t,x,\omega)=\int_{\Omega\times \S} \mathbf{K}(t,x,x_*,\omega,\omega_*)f(t,x_*,\omega_*)dx_*d\omega_*.
\end{equation}
From the flux $\boldsymbol J$ we define the functional $\boldsymbol\Psi$ as
\begin{equation}\label{eq:general kinetic orthogonal kernel}
\exists \alpha \in [0,1],\quad\boldsymbol\Psi[f](x,\omega) = \frac{\boldsymbol  J[f](t,x,\omega)}{\abs{\boldsymbol J[f](t,x,\omega)}_\alpha},
\end{equation}
where
\begin{equation}\label{norm alpha}
\abs{\boldsymbol J[f](t,x,\omega)}_\alpha = \alpha +(1-\alpha)\abs{\boldsymbol J[f](t,x,\omega)},
\end{equation}
where $\abs{\cdot}$ stands for the norm in $\R^d$. Note that when $\alpha =0$ we talk about a normalised operator since $\abs{\boldsymbol\Psi[f]}= 1$ whereas for $\alpha=1$ we have a complete kernel operator.

The orthogonal kernel non-linearity $\eqref{eq:general kinetic orthogonal}$ we tackle is more general than the gradient-type interaction $\eqref{eq:general kinetic gradient}$. Furthermore, it is also more physically relevant when one wants to derive the Kolmogorov-Vicsek type of kinetic equation from particle systems behaviour \cite{BolCanCar1,BolCanCar2}. The main issue for the kinetic part is the possible degeneracy of $\abs{\boldsymbol\Psi[f]}_\alpha$ as well as the nonlinearity of the dissipativity and viscosity that does not necessarily compensate the aformentionned degeneracy, unlike existing works in the literature. The key part of our strategy is to provide first a quantification of the possible time of degeneracy of the nonlinearity combined to a study of the dependencies over $\sigma$ and $\nu$ at a linear level.

\begin{remark}
In the literature, typically, the operator $K$ is of the form $\mathbf{K}(x,x_*,\omega,\omega_*) = \mathbf{K}(x-x_*,\omega_*)$. In this case it holds that  
$$\mathbf{P}_{\omega^\bot}(\boldsymbol\Psi[f]) = \nabla_\omega\pa{\omega \cdot\int_{\Omega\times\S^{d-1}} \mathbf{K}(x-x_*,\omega_*)f(x_*,\omega_*)dx_*d\omega_*}$$
because for a vector $\mathbf{X}$ in $\R^d$, $\nabla_\omega (\omega\cdot X) = \mathbf{P}_{\omega^\bot}(\mathbf{X})$. Therefore, in this case the Kolmogorov-Vicsek equation $\eqref{eq:general kinetic orthogonal}$ can be  rewritten as a gradient-type interaction
\begin{equation}\label{eq:general kinetic gradient}
\begin{split}
\partial_t f + c \omega\cdot\nabla_x f &= \nabla_\omega \cdot\pa{\sigma(f) \nabla_\omega f} + \nabla_\omega\cdot\pa{\nu(f)f\nabla_\omega\psi[f]}.
\end{split}
\end{equation}
\end{remark}
\bigskip
\begin{theorem}\label{theo:local cauchy}
Let $\Omega$ being either $\T^d$ or $\R^d$, $\alpha$ in $[0,1]$ and $\sigma$, $\nu$ and $\mathbf{K}$ satisfying the hypotheses $(H\ref{HK})-(H\ref{Hsigma})-(H\ref{Hnu})$. Let $p$ belong to $[2,+\infty]$ and $f_0$ be such that
\begin{itemize}
\item[(i)] $f_0$ is a non-negative function in $L^1_{x,\omega} \cap L^p_{x,\omega}$ with mass 
$$\int_{\Omega\times\S^{d-1}}f_0(x,\omega)dxd\omega:=M_0 >0;$$
\item[(ii)]  $\inf\limits_{(x,\omega) \in \Omega\times\S^{d-1}} \abs{\boldsymbol J[f_0](x,\omega)}_\alpha:=J_0  > 0$, where $\abs{\boldsymbol J[f_0]}_\alpha$ was defined in $\eqref{norm alpha}$.
\end{itemize}
There exists a time $T_{\max} >0$, independent of $p$, and a unique weak solution $f$ in $L^2\pa{[0,T_{\max}),L^1_{x,\omega} \cap L^2_{x,\omega}}$ to $\eqref{eq:general kinetic orthogonal}$ with $f_0$ as initial datum. Moreover, $f$ is non-negative on $[0,T_{\max})$, belongs to $L^\infty\pa{[0,T_{\max}),L^p_{x,\omega}} \cap L^2\pa{[0,T_{\max}),L^2_xH^1_\omega}$, preserves the mass $M_0$ and one of the following holds
\begin{itemize}
\item[(a)] $\disp{T_{\max} = +\infty}$
\item[(b)] $\disp{\lim\limits_{t\to T_{\max}^-}\sup\limits_{\Omega \times\S^{d-1}}\frac{\nu(f)}{\abs{\boldsymbol J[f]}_\alpha}=+\infty}$.
\end{itemize}
\end{theorem}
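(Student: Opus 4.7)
The plan is to construct the solution by a Banach fixed-point argument on a suitable linearisation, combined with a quantitative control on how long the denominator $|\boldsymbol J[f]|_\alpha$ can stay bounded away from $0$. Throughout, the main space is a closed ball $\mathcal{B}_{T,R}$ of non-negative functions $g$ in $L^2([0,T],L^2_{x,\omega})$ with prescribed mass $M_0$, $L^2$-norm at most $R$, and satisfying $\inf_{(t,x,\omega)}|\boldsymbol J[g]|_\alpha\geq J_0/2$. Given $g\in \mathcal{B}_{T,R}$, freeze the non-linear coefficients by setting $\sigma_g=\sigma(g)$, $\nu_g=\nu(g)$, $\Psi_g=\boldsymbol\Psi[g]$ and define $\Phi(g)=f$ to be the solution of the linear parabolic equation
\begin{equation*}
\partial_t f+c\omega\cdot\nabla_x f=\nabla_\omega\cdot(\sigma_g\nabla_\omega f)+\nabla_\omega\cdot(\nu_g f\,\mathbf{P}_{\omega^\bot}\Psi_g),\qquad f(0,\cdot,\cdot)=f_0.
\end{equation*}

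\textbf{Linear Cauchy theory.} First I would build a complete theory for this linear equation with frozen, bounded coefficients satisfying $\sigma_g\geq\sigma_0$: existence and uniqueness by Galerkin truncation on the sphere plus energy estimates, preservation of non-negativity by a standard truncation $f_-$ argument, conservation of mass by testing against $1$, and propagation of every $L^p$-norm for $p\in[2,+\infty]$ by testing against $f^{p-1}$. The resulting constants depend only on $\norm{\nu_g}_\infty$ and on $\norm{\Psi_g}_\infty$, both of which are controlled once we know $|\boldsymbol J[g]|_\alpha$ is bounded below. The standard energy estimate additionally yields $f\in L^\infty_{[0,T]}L^2_{x,\omega}\cap L^2_{[0,T]}L^2_xH^1_\omega$.

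\textbf{Quantitative non-vanishing of the flux.} The key preliminary is to show that the map $t\mapsto\boldsymbol J[f](t,x,\omega)$ is Hölder continuous uniformly in $(x,\omega)$ with a modulus depending only on $\norm{f_0}$, $J_0$ and the structural constants. Exploiting the regularity $\mathbf{K}\in W^{1,\infty}_{t,x_*}W^{2,\infty}_{\omega_*}$ in $(H\ref{HK})$, one writes $\partial_t\boldsymbol J[f]$ using the equation on $f$ and then integrates by parts in $x_*$ and twice in $\omega_*$ so that all derivatives land on $\mathbf{K}$. The resulting estimate bounds $|\boldsymbol J[f](t)-\boldsymbol J[f_0]|$ by $C\,t^{1/2}\,P(\norm{f}_{L^2_{x,\omega}},\norm{f}_{L^2_x H^1_\omega})$ uniformly in $(x,\omega)$. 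This gives a time $T_1=T_1(J_0,M_0,\norm{f_0}_{L^2})>0$ on which any linear iterate $f=\Phi(g)$ still satisfies $|\boldsymbol J[f]|_\alpha\geq J_0/2$, so $\Phi$ stabilises $\mathcal{B}_{T_1,R}$ for an appropriate $R$.

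\textbf{Contraction.} For $g_1,g_2\in \mathcal{B}_{T,R}$ with $T\leq T_1$ I would write the equation satisfied by $h=\Phi(g_1)-\Phi(g_2)$. Its right-hand side contains: a term $\nabla_\omega\cdot((\sigma(g_1)-\sigma(g_2))\nabla_\omega f_2)$ controlled by $(H\ref{Hsigma})$; a term $\nabla_\omega\cdot((\nu(g_1)-\nu(g_2))f_2\mathbf{P}_{\omega^\bot}\Psi[g_1])$ controlled by $(H\ref{Hnu})$; and $\nabla_\omega\cdot(\nu(g_2)f_2\mathbf{P}_{\omega^\bot}(\Psi[g_1]-\Psi[g_2]))$, which is controlled by the key Lipschitz estimate on $g\mapsto\boldsymbol\Psi[g]$ in the region where $|\boldsymbol J[g]|_\alpha\geq J_0/2$, itself obtained from $(H\ref{HK})$. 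Testing against $h$ and integrating by parts produces
\begin{equation*}
\norm{h}_{L^\infty_{[0,T]}L^2_{x,\omega}}^2+\sigma_0\norm{\nabla_\omega h}_{L^2_{[0,T],x,\omega}}^2\leq C(T,J_0,R)\,\norm{g_1-g_2}_{L^2_{[0,T],x,\omega}}^2,
\end{equation*}
with $C(T,J_0,R)\to 0$ as $T\to 0$. For $T$ small enough $\Phi$ is a strict contraction, yielding a unique fixed point $f$ on $[0,T]$; uniqueness of the fixed point transfers to uniqueness of the weak solution by the same difference estimate.

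\textbf{Extension, blow-up criterion and $p$-independence.} Define $T_{\max}$ as the supremum of times $T$ on which a solution exists with $\sup_{[0,T)\times\Omega\times\S^{d-1}}\nu(f)/|\boldsymbol J[f]|_\alpha<+\infty$. As long as this ratio stays finite, the flux stays bounded below and the norm $\norm{\Psi[f]}_\infty$ remains controlled, so the fixed-point argument can be restarted at any time close to $T_{\max}$ with uniform lower bound on the contraction time, yielding an extension past $T_{\max}$; by contradiction we obtain alternative (a) or (b). The fact that $T_{\max}$ does not depend on $p$ comes from the fact that the fixed point is performed in the $L^2$ framework, while the $L^p$ bounds for larger $p$ are simply propagated on the same interval through the linear step. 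The main obstacle, and what forces the specific order of the proof above, is the quantitative non-degeneracy of the flux on a time interval depending only on $J_0$, $M_0$ and the structural constants; without the careful use of the smoothness of $\mathbf{K}$ to integrate by parts, the map $g\mapsto\boldsymbol\Psi[g]$ simply fails to be Lipschitz on any ball of $L^2$ and the fixed-point scheme collapses.
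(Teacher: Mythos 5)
Your overall architecture is the one the paper uses: freeze the coefficients, solve the linear Fokker--Planck equation, show by integrating by parts onto $\mathbf{K}$ that the flux cannot degenerate before an explicit time, and close a contraction in an $L^2$-in-time framework. (Your flux estimate is stated as $Ct^{1/2}$ times $H^1_\omega$-type norms, but if, as you say, all derivatives land on $\mathbf{K}$, then only $\norm{f}_{L^1_{x,\omega}}=M_0$ is needed and one gets the sharper linear-in-time bound $\abs{\boldsymbol J[f](t)}\geq J_0-K_\infty M_0 t$ of Proposition \ref{prop:positivity}; this is a harmless weakening.)

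There is, however, one step that fails as written: the contraction estimate
\begin{equation*}
\norm{h}_{L^\infty_{[0,T]}L^2_{x,\omega}}^2+\sigma_0\norm{\nabla_\omega h}_{L^2_{[0,T],x,\omega}}^2\leq C(T,J_0,R)\,\norm{g_1-g_2}_{L^2_{[0,T],x,\omega}}^2\quad\text{with }C(T,J_0,R)\to 0\text{ as }T\to 0
\end{equation*}
cannot hold. Take $g_1-g_2$ supported in time near $t=0$: shrinking $T$ does not decrease $\norm{g_1-g_2}_{L^2_{[0,T],x,\omega}}$, while the forcing acts immediately and $\norm{h}_{L^\infty_{[0,T]}L^2_{x,\omega}}$ stays of order one, so no vanishing constant is possible. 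Concretely, after testing against $h$ the three difference terms produce, via Gr\"onwall, bounds of the form $\Psi_{\mbox{\tiny{lip}}}\norm{g_1-g_2}^2_{L^2_{[0,t],x,\omega}}$ and $\sigma_{\mbox{\tiny{lip}}}^2\norm{g_1-g_2}^2_{L^2_{[0,t],x,\omega}}\int_0^t\norm{\nabla_\omega f_2}^2_{L^2_{x,\omega}}ds$; the first carries no small factor at all, and for the second the quantity $\int_0^t\norm{\nabla_\omega f_2}^2_{L^2_{x,\omega}}ds$ is bounded uniformly over the ball (by the gain-of-regularity estimate of Proposition \ref{prop:Lp linear}) but its rate of vanishing as $t\to 0$ cannot be quantified uniformly --- this is exactly the obstruction isolated in Remark \ref{rem:dependence coef} and the reason hypothesis $(H\ref{Hsigma})$ asks $\sigma$ to be Lipschitz from $L^2_{[0,T],x,\omega}$ rather than locally in time. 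The repair is the one the paper makes: accept a pointwise-in-time bound $\norm{h(t)}^2_{L^2_{x,\omega}}\leq C_0\norm{g_1-g_2}^2_{L^2_{[0,t],x,\omega}}$ with $C_0$ merely bounded, and integrate once more in time to get $\norm{h}^2_{L^2_{[0,T],x,\omega}}\leq C_0T\norm{g_1-g_2}^2_{L^2_{[0,T],x,\omega}}$, whence contraction for $T$ small as in \eqref{Lambda(T)}--\eqref{T1}. Since your ambient space is already $L^2([0,T],L^2_{x,\omega})$, this costs you nothing, but the statement you wrote is the one point where the scheme genuinely does not close; the rest (extension, dichotomy (a)/(b), $p$-independence via propagation of $L^p$ bounds on the same interval) matches the paper.
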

\bigskip

\begin{remark}
Of important note are the following consequences.
\begin{itemize}
\item Our Cauchy theory does not stand on any \textit{a priori} assumption of the solutions and provide an explicit lower bound for $T_{\max}$ (see $\eqref{T0}-\eqref{T1}$). In particular, for $T<T_1$ (where $T_1>0$ is given in \eqref{T1}), it holds that for all $t\in[0,T]$
\be \label{eq:boundJf}
|\boldsymbol J(t,x,\omega,f)| \geq J_0 - K_\infty M_0 T>0,
\ee
where $K_\infty$ is given by \eqref{eq: Kinfty} and $M_0$ is as given in the theorem above.\\
In the sequel we will define $c_*= c_*(T)$ as
\be  \label{eq:defcstart}
c_* = J_0 - K_\infty M_0 T, \quad \mbox{ for }T<T_1.
\ee

\item The theorem above includes all the previous results made in $L^2$ or $L^\infty$, both in the non-homogeneous and the homogeneous case (it suffices to consider $\mathbf{K} = k(x_*)\mathbf{K}(\omega,\omega_*)$ with $\int_\Omega k = 1$). 
\item The global existence criterion offers direct global existence for non-normalized interactions $\alpha \neq 0$ but it also gives global existence for the original Vicsek equation with spatially-homogeneous kernel $\mathbf{K}(t,x,x_*,\omega,\omega_*) = \omega_*$ and fully homogeneous viscosity and negative friction (\textit{i.e.} only $f$ and time-dependent). Section \ref{subsec:global examples} describes several general cases where global existence happens in the problematic and purely normalised case $\alpha=0$.
\end{itemize}
\end{remark}
\bigskip

Besides the issue of well-posedness, it is of great interest to understand the large time behaviour of the solutions. We recall that Section \ref{subsec:global examples} proves global existence for different types of interactions and it also exhibits a free energy which decreases along the flow. One cannot expect a general theory since it heavily depends on the shape of the kernel $\mathbf{K}$ but one can still wonder if there are equilibria and if they are attractive. For this purpose, kinetic equations with gradient non-linearities  $\eqref{eq:general kinetic gradient}$ are often used  because one can extract an explicit free energy functional decreasing along time. Hence leading to the existence of equilibria and a hope for an asymptotic study of the solutions. This has been tackled in \cite{FrouLiu,DegFrouLiu1,DegFrouLiu2} where the authors exhibited a decreasing energy functional
\begin{equation}\label{eq:free energy homogeneous}
\mathcal{F}(t) = \int_{\S^{d-1}}f\mbox{ln}(f)d\omega + \frac{1}{2}\int_0^{\abs{\mathbf{J}_f}}\frac{\nu(s)}{\sigma(s)}ds
\end{equation}
when $\sigma$ and $\nu$ are solely functions of $\abs{\mathbf{J}_f}$. Associated to the latter is an energy dissipation allowing to dig out explicitely the equilibria in the spatially homogeneous setting. 
\par It appears that a free energy is also underlying in the general equation: $\eqref{eq:general kinetic orthogonal}$ enjoys a free energy functional that decreases along the flow with an explicit energy dissipation. Namely,
\begin{equation}\label{eq:free energy}
\begin{split}
\mathcal{F}[f](t) =& \int_{\Omega \times \S^{d-1}} f\mbox{ln}(f)dxd\omega
\\&+ \int_0^t \int_{\Omega \times \S^{d-1}} f \cro{\nabla_\omega\cdot\pa{\nu(f)\mathbf{P}_{\omega^\bot}[\boldsymbol\Psi[f]]} - \frac{\nu(f)^2}{\sigma(f)}\abs{\mathbf{P}_{\omega^\bot}[\boldsymbol\Psi[f]]}^2}dxd\omega ds
\end{split}
\end{equation}
and
\begin{equation}\label{eq:energy dissipation}
\mathcal{D}[f](t) = \int_{\Omega\times\S^{d-1}} \sigma(f) f \abs{\nabla_\omega \mbox{ln}(f) + \frac{\nu(f)}{\sigma(f)}\mathbf{P}_{\omega^\bot}[\boldsymbol\Psi[f]]}^2dxd\omega.
\end{equation}
which will be proven, in Section \ref{sec:long time behaviour}, to satisfy along the flow 
$$\frac{d}{dt}\mathcal{F}[f](t) = - \mathcal{D}[f](t).$$

\begin{remark}\label{rem:comparison free energy}
Let us emphasize that considering the spatially homogeneous case $\mathbf{K}= \omega_*$, $\eqref{eq:free energy}$ and $\eqref{eq:energy dissipation}$ are the ones obtained in \cite{DegFrouLiu1} with $\abs{\mathbf{J}_f}^2$ instead of $\abs{\mathbf{J}_f}$ (in case of constant viscosity and friction we also recover the original Doi-Onsager free energy \cite{Ons,Doi}). For general gradient kernel interactions $\psi[f]$ $\eqref{eq:general kinetic gradient}$ that are symmetric in $\omega$ and $\omega_*$ the free energy $\eqref{eq:free energy}$ becomes
$$\mathcal{F}(t) = \int_{\S^{d-1}}f\mbox{ln}(f)d\omega + \frac{1}{2}\int_0^{\langle\psi[f],f\rangle_{L^2_{x,\omega}}}\frac{\nu(s)}{\sigma(s)}ds$$
for $\nu$ and $\sigma$ being solely functions of $\langle\psi[f],f\rangle_{L^2_{x,\omega}}$ (which equals $\abs{\mathbf{J}_f}^2$). Hence offering a new view on the gradient structure where the natural dependencies are, in fact, $\langle\psi[f],f\rangle_{L^2_{x,\omega}} = \abs{\mathbf{J}_f}^2$.
\end{remark}

One can immediately see that the energy dissipation vanishes on 
\begin{equation}\label{eq:set of equilibria}
\mathcal{E}_\infty = \br{f_\infty \geq 0 \in L^\infty_tL^p_xH^2_\omega,\quad \nabla_\omega\pa{ \mbox{ln} f_\infty}(t,x,\omega) = -\frac{\nu(f_\infty)}{\sigma(f_\infty)}\mathbf{P}_{\omega^\bot}[\boldsymbol\Psi[f_\infty]](t,x,\omega) } 
\end{equation}
and when it vanishes so does the right-hand side of the kinetic equation $\eqref{eq:general kinetic orthogonal}$. The latter with the decrease of $\mathcal{F}[f](t)$ is a good hope that one could get a La Salle's invariance principle - in the spirit of \cite{FrouLiu} - when solutions are globally defined: namely the solution draws closer to the set of local equilibria $\mathcal{E}_\infty$.

\begin{remark}
If one establishes a La Salle's principle then necessarily, for a global equilibrium $f_\infty$ the quantity $\frac{\nu(f_\infty)}{\sigma(f_\infty)}\mathbf{P}_{\omega^\bot}[\boldsymbol\Psi[f_\infty]] = \nabla_\omega\pa{\psi[f]}$ must be a tangential gradient - and then $f_\infty(t,x,\omega) = A(t,x)e^{-\psi[f](t,x,\omega)}$ and we recover generalised von Mises equilibria of the previous \cite{FrouLiu,DegFrouLiu1,DegFrouLiu2}. Hence, as mentionned earlier, a gradient structure pops out naturally but does not solely concern $\mathbf{P}_{\omega^\bot}[\boldsymbol\Psi[f]]$ as first considered in $\eqref{eq:general kinetic gradient}$. It however is not the purpose of this article and we did not investigate further.
\end{remark}
\bigskip


\subsection{Microscopic point of view and mean-field limit for the Vicsek kinetic equation: strategy and results}
\label{subsec:particles intro}

We will prove that the kinetic equation can be obtained as the mean-field limit of some `approximated' (regularized) particle system. When there  is normalization ($\alpha = 0$), we will show that realizations of the approximated dynamics are also solutions of the Vicsek particle system with a probability which tends to one in the many-particle limit (Th. \ref{th:NormalizedCase}). Note, however, that for the case $\alpha=0$,  we will not prove the mean-field limit for the kinetic equation starting from particle systems of the form \eqref{eq:Vicsek_X}-\eqref{eq:Vicsek_omega}, nor its well-posedness. \\
We consider a system of $N$ particles given by their positions $X^{i,N}_t\in \Omega$ and velocities $V^{i,N}_t \in \R^d$ over time $t\geq 0$. Notice that we shall consider that the equations are written for $v\in \R^d$ rather than in $\omega \in \mathbb{S}^{d-1}$, but we shall later prove that the
velocities are restricted to the sphere with radius $c$, thus recovering the expected spherical dynamics.  We recall the empirical distribution of the process $(X^{i,N}_t,V^{i,N}_t)_{1\leq i \leq N}$:
\be \label{eq:empirical_distribution}
\mu^N_t(x,v) := \frac{1}{N}\sum_{i=1}^N\delta_{(X^{i,N}_t,V^{i,N}_t)}(x,v),
\ee
As mentioned before, in this article we will prove results on two types of particle systems. The first one we will call `general particle Vicsek' and the second one `approximated particle dynamics'. We define the \textbf{general particle Vicsek} as the particle system given by the following   Stratonovich stochastic differential equation:
\begin{subequations}\label{eq:particle_dyn_Vicsek}
\begin{numcases}{} 
dX^{i,N}_t = V^{i,N}_t dt,\\
dV^{i,N}_t = \nu(\mu^N_t)\, \mathbf{P}_{(V^{i,N}_t)^\perp} (\boldsymbol \Psi(X^{i,N}_t,V^{i,N}_t, \mu^N_t)) dt \nonumber \\
\qquad\quad+\frac{1}{2} \mathbf{P}_{(V^{i,N}_t)^\perp}[(\nabla_v\sigma(\mu^N_t))(X^{i,N}_t,V^{i,N}_t)]\, dt\label{eq:extra_term}\\
\qquad\quad + \sqrt{2\sigma(\mu^N_t)} \, \mathbf{P}_{(V^{i,N}_t)^\perp} \circ dB^i_t,\\
X^{i,N}_t(t=0)= \xin_0, \quad V^{i,N}_t(t=0)=\oin_0,
\end{numcases}
\end{subequations}
where  $\mathbf{P}_{v^\perp}$ is the projection operator
$$\mathbf{P}_{v^\perp}= \mbox{Id} - \frac{v\otimes v}{|v|^2},$$
where $\mbox{Id}$ is the identity matrix; $((B^i_t)_{t\geq })_{i=1,\hdots,N}$ are independent Brownian motions in $\R^d$. 
The symbol $'\circ'$ denotes that the stochastic differential equation \eqref{eq:particle_dyn_Vicsek} is in Stratonovich convention.\\ 
The precise way in which the function $\boldsymbol\Psi$ is extended to $V_t\in \R^d$ is explained in Sec. \ref{sec:non-singular-dynamics}, see system \eqref{eq:non-singular-particle_dyn}.

\bigskip
The `approximated particle system' is given by similar equations where the difference is that the functional $\boldsymbol \Psi$ is replaced by a functional $\tau_{\eps_0}$ that will be made precise later:
\begin{subequations}\label{eq:particle_dyn}
\begin{numcases}{} 
dX^{i,N}_t = V^{i,N}_t dt,\\
dV^{i,N}_t = \nu(\mu^N_t)\, \mathbf{P}_{(V^{i,N}_t)^\perp} (\tau_{\eps_0}(X^{i,N}_t,V^{i,N}_t, \mu^N_t)) dt \nonumber \\
\qquad\quad+\frac{1}{2} \mathbf{P}_{(V^{i,N}_t)^\perp}[(\nabla_v\sigma(\mu^N_t))(X^{i,N}_t,V^{i,N}_t)]\, dt\label{eq:extra_term}\\
\qquad\quad + \sqrt{2\sigma(\mu^N_t)} \, \mathbf{P}_{(V^{i,N}_t)^\perp} \circ dB^i_t,\\
X^{i,N}_t(t=0)= \xin_0, \quad V^{i,N}_t(t=0)=\oin_0,
\end{numcases}
\end{subequations}

The function $\tau_{\eps_0}=\tau_{\eps_0}(x,v,m)$ is defined such that 
\be \label{eq:taudefinition}
\tau_{\eps_0}(x,v, m)=\mathbf{\Psi}(x,v,m) \mbox{ whenever }|\mathbf J(x,v,m)|\geq \eps_0,
\ee
 for some $\eps_0>0$ and so that it is well defined in $\mathcal{H}$ (i.e., without singularities). In particular, in the system above we will have that
$$\tau_{\eps_0}(X^{i,N}_t,V^{i,N}_t, \mu^N_t)=\boldsymbol\Psi(X^{i,N}_t,V^{i,N}_t, \mu^N_t), \mbox{ if } |\mathbf J(X^{i,N}_t,V^{i,N}_t, \mu^N_t)|\geq \eps_0.$$
Because $\tau_{\eps_0}$ does not present any singularities when $|\mathbf J|=0$, the approximated particle system \eqref{eq:particle_dyn} can be thought of a regularization of the Vicsek particle system \eqref{eq:particle_dyn_Vicsek}.

Associated to the approximated particle system we define the approximated kinetic equation given by
\begin{equation}\label{eq:general kinetic approx}
\begin{split}
\partial_t f + c \omega\cdot\nabla_x f &= \nabla_\omega \cdot\pa{\sigma(f) \nabla_\omega f} +\nabla_\omega\cdot\pa{\nu(f)f\mathbf{P}_{\omega^\bot}\cro{\tau_{\eps_0}[f]}},
\end{split}
\end{equation}
Observe that  when $\tau_{\eps_0}$ is substituted by $\boldsymbol \Psi$ in the approximated kinetic equation \eqref{eq:general kinetic approx}, we obtain the Vicsek kinetic equation \eqref{eq:general kinetic orthogonal}.

\bigskip
First, we will show the well-posedness for the approximated particle system (Th. \ref{th:existence_solutions}) and that in the mean-field limit gives the Vicsek kinetic equation (for short times) (Th. \ref{th:convergence_process} and Cor. \ref{cor:mean-field}).

\begin{remark}
Notice that the term \eqref{eq:extra_term} in system \eqref{eq:particle_dyn_Vicsek} appears so that we obtain the kinetic equation \eqref{eq:general kinetic orthogonal}. This is just a technicality: system \eqref{eq:particle_dyn_Vicsek} in  It\^o's convention corresponds to:
\begin{subequations}\label{eq:particle_dyn_Ito}
\begin{numcases}{} 
dX^{i,N} = V^{i,N} dt,\\
dV^{i,N} = \nu(\mu^N)\, P_{(V^{i,N})^\perp} (\boldsymbol \Psi(X^{i,N},V^{i,N}, \mu^N)) dt \nonumber \\
\qquad\quad+ P_{(\boin)^\perp}[(\nabla_v\sigma(\mu^N))(\bxin,\boin)]\, dt\nonumber\\
\qquad\quad + \sqrt{2\sigma(\mu^N)} \, P_{(V^{i,N})^\perp} dB^i_t\nonumber\\
\qquad\quad- 2(d-1) \sigma(\mu^N)\frac{V^{i,N}}{|V^{i,N}|^2},\\
\xin(t=0)= \xin_0, \quad \oin(t=0)=\oin_0.
\end{numcases}
\end{subequations}
 See also Appendix \ref{sec:Ito_conversion} for more details.  Without the extra term \eqref{eq:extra_term} we would obtain a kinetic equation where the operator in $\omega$ is of the form (see Eq. \eqref{eq:ito_application})
$$\Delta_\omega (\sigma(f) f)-\frac{1}{2}\nabla_\omega \cdot (\nabla_\omega \sigma(f) f).$$
But with the extra term the operator $\omega$ in the kinetic equation is of the form
$$\Delta_\omega(\sigma(f) f) -\nabla_\omega\cdot(\nabla_\omega\sigma(f) f) =\nabla_\omega\cdot (\sigma(f) \nabla_\omega f),$$
which is the operator that we are dealing with in \eqref{eq:general kinetic orthogonal}. Notice that if we wanted to obtain just a factor $\Delta_\omega (\sigma(f) f)$ then the constant in front of the extra term \eqref{eq:extra_term} should be $-1/2$ rather than $1/2$. Notice also that if $\sigma=\sigma(t,x,f)$ does not depend on $\omega$, then the extra term \eqref{eq:extra_term} does not appear. This is the case, for example, when $\sigma=\sigma(|\tilde{\mathbf{J}}_f|(t,x))$ for $\tilde{\mathbf{J}}_f$ defined in \eqref{eq:aux_example_flux}, see \cite{DegFrouLiu2}. The extra term \eqref{eq:extra_term} has the effect of relaxing $V^{i,N}$ towards the value taken by $\nabla_v \sigma$. It can be rewritten equivalently as a gradient
$$\frac{1}{2} \mathbf{P}_{(V^{i,N}_t)^\perp}[(\nabla_v\sigma(\mu^N_t))(X^{i,N}_t,V^{i,N}_t)]\, dt=\frac{1}{2}\nabla_{V^{i,N}}(V^{i,N}\cdot \nabla_v\sigma(\mu^N_t))(X^{i,N}_t,V^{i,N}_t),$$
where the gradient is on the sphere.

\end{remark}
\bigskip

As announced before, the dynamics described by our agent-based model indeed force the velocities to have unit norm, as shown by the next lemma.
\begin{lemma}
\label{lem:norm_1_particles}
Suppose that $(\xin_t,\oin_t)$ is a solution to the approximated particle dynamics \eqref{eq:particle_dyn} or the Vicsek particle dynamics \eqref{eq:particle_dyn_Vicsek}. Suppose that $|\oin_0|=1$ for all $i=1,\hdots, N$, then it holds that
$$|\oin_t| = 1, \qquad \mbox{ for all } i=1,\hdots, N,$$
for all times where the solution is defined.
\end{lemma}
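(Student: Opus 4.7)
The natural approach is to exploit the fact that the system is written in Stratonovich form so that one may apply the ordinary chain rule, and to observe that every term driving $\mathrm{d}V^{i,N}_t$ is of the form $\mathbf{P}_{(V^{i,N}_t)^\perp}[\cdot]$, which is by definition orthogonal to $V^{i,N}_t$. The plan is therefore to compute $\mathrm{d}\,|V^{i,N}_t|^2$ in the Stratonovich calculus and show it vanishes.

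First, I would handle the (purely analytic) point that $\mathbf{P}_{v^\perp} = \mathrm{Id} - v\otimes v/|v|^2$ is smooth only away from $v=0$. Let $\tau := \inf\{t\geq 0 :\, |V^{i,N}_t| = 0 \text{ for some } i\}$. For $t<\tau$ all coefficients in \eqref{eq:particle_dyn} (or \eqref{eq:particle_dyn_Vicsek}) are smooth in the state variables, so a unique strong solution exists on $[0,\tau)$ by standard SDE theory, and the Stratonovich chain rule applies to $\varphi(V) = |V|^2$.

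Next, on $[0,\tau)$ I write, dropping the indices for legibility,
\begin{equation*}
\mathrm{d}|V_t|^2 \;=\; 2\,V_t\cdot\circ\,\mathrm{d}V_t \;=\; 2\,V_t\cdot\bigl(A_t\,\mathrm{d}t\bigr)\;+\;2\sqrt{2\sigma(\mu^N_t)}\,V_t\cdot \mathbf{P}_{V_t^\perp}\circ \mathrm{d}B^i_t,
\end{equation*}
where $A_t$ collects the drift terms
\begin{equation*}
A_t \;=\; \nu(\mu^N_t)\,\mathbf{P}_{V_t^\perp}\!\bigl[\tau_{\eps_0}(X_t,V_t,\mu^N_t)\bigr] + \tfrac12\,\mathbf{P}_{V_t^\perp}\!\bigl[(\nabla_v\sigma(\mu^N_t))(X_t,V_t)\bigr]
\end{equation*}
(resp.\ with $\boldsymbol\Psi$ in place of $\tau_{\eps_0}$ for \eqref{eq:particle_dyn_Vicsek}). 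Since $V_t\cdot \mathbf{P}_{V_t^\perp} w = 0$ for every $w\in\mathbb{R}^d$, each scalar product on the right vanishes identically. Hence $\mathrm{d}|V_t|^2 = 0$ on $[0,\tau)$, and $|V^{i,N}_t|^2 = |V^{i,N}_0|^2 = 1$ almost surely on this interval.

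Finally, I conclude by a continuity argument: if $\tau<\infty$, then by continuity of the sample paths one would have $\lim_{t\to\tau^-}|V^{i,N}_t| = 0$ for some $i$, contradicting $|V^{i,N}_t|=1$ on $[0,\tau)$. Therefore $\tau=+\infty$ almost surely, and $|V^{i,N}_t|=1$ for all $t\geq 0$ for every $i$. The main (though modest) obstacle is precisely the singularity of $\mathbf{P}_{v^\perp}$ at $v=0$, which forces the introduction of the stopping time $\tau$; once that is in place, the rest reduces to the algebraic identity $v\cdot \mathbf{P}_{v^\perp}(\cdot) = 0$ combined with the Stratonovich chain rule, which is what makes Stratonovich (rather than It\^o) the natural convention for this model.
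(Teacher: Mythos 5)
Your proof is correct and takes essentially the same route as the paper's, which simply observes that $d|V^{i,N}_t|^2=0$ by the Stratonovich chain rule together with the identity $V\cdot\mathbf{P}_{V^\perp}=0$. Your extra stopping-time argument handling the singularity of $\mathbf{P}_{v^\perp}$ at $v=0$ is a sensible piece of care that the paper leaves implicit, but it does not change the underlying argument.
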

\begin{proof}[Proof of Lemma \ref{lem:norm_1_particles}]
It is a direct check that
$$d|\oin_t|^2=0$$
using Stratonovich chain rule (see, for example, \cite[p. 122]{evans2012introduction}) and the fact that $V\cdot \mathbf{P}_{V^\perp}=0$.
\end{proof}

The potential degeneracy of $\boldsymbol\Psi$ breaks the Lipschitz regularity of the interactions and thus prevents standard agent-based well-posedness results to apply. There exist results for non-Lipschitz interactions \cite{BolCanCar1} but their singularities differ from the one at stake in the present article.
\par  In the spirit of \cite{BolCanCar2}, we thus shall prove well-posedness of the system hand-in-hand with the mean-field limit with the aid of an auxiliary process which mixes microscopic dynamics with the mesoscopic distribution function $f_t$. The main difference with respect to \cite{BolCanCar2} is that we need to deal the fact that the noise coefficient $\sigma$ is not constant. Let us explicit the auxiliary process for the approximated particle system $(\bxin_t, \boin_t)_{t\geq 0}$, $i=1,\hdots, N$, solution of
\begin{subequations} \label{eq:auxiliary}
\begin{numcases}{}
d\bxin_t = \boin_t dt,\\
d\boin_t = \nu(f_t)\, \mathbf{P}_{(\boin_t)^\perp} (\tau_{\eps_0}(\bxin_t,\boin_t,f_t)) dt \nonumber\\
\qquad\quad+\frac{1}{2} \mathbf{P}_{(\boin_t)^\perp}[(\nabla_\omega\sigma(f))(\bxin_t,\boin_t)]\, dt\nonumber\\
\qquad\quad+ \sqrt{2\sigma(f_t)}\, \mathbf{P}_{(\boin_t)^\perp} \circ dB^i_t,\\
f_t = \mbox{law}(\bxin_t, \boin_t),\\
\bxin_t(t=0) = \xin_0, \quad \boin_t(t=0)=\oin_0,
\end{numcases}
\end{subequations}
where the initial data and the Brownian processes $(B^i_t)_{t\geq 0}$ are the same ones as in \eqref{eq:particle_dyn}. Notice that all the processes $(\bxin_t, \boin_t)$ are independent by construction and they all have the same law $f_t$.

\bigskip
We will prove the following two results for the approximated particle system \eqref{eq:particle_dyn} and the approximated kinetic equation \eqref{eq:general kinetic approx}.

\bigskip
\begin{theorem}[Local-in-time existence and uniqueness of solutions for the approximated system]\label{th:existence_solutions}

Under the assumptions of Theorem \ref{theo:local cauchy} and assumption (H4),
let $f_0$ be a probability measure on $\R^d \times \mathbb{S}^{d-1}$  with finite second moment in $x\in \R^d$ and let $(X_{0}^{i,N}, V^{i,N}_0)$ for $i=1,\hdots, N$ be $N$ independent random variables with law $f_0$. The following holds:
\begin{itemize}
\item[(i)]  There exists a pathwise global unique solution to the SDE system \eqref{eq:particle_dyn}  with initial data $(X_{0}^{i,N}, V^{i,N}_0)$ for $i=1,\hdots, N$. Moreover, the solution is such that $|V^{i,N}_t|=1$.
\item[(ii)] There exists a pathwise global unique solution to the auxiliary process \eqref{eq:auxiliary} with initial data $(X_{0}^{i,N}, V^{i,N}_0)$ for $i=1,\hdots, N$ and $|\boin_t|=1$.
\item[(iii)] There exists a global-in-time unique weak solution of the approximated kinetic equation \eqref{eq:general kinetic approx} with initial datum $f_0$. The solution of the kinetic equation is the law of the process solution to the auxiliary System \eqref{eq:auxiliary}, wherever the solution is defined. 
\end{itemize}
\end{theorem}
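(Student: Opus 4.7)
The overall strategy is to exploit the fact that introducing the cutoff $\tau_{\eps_0}$ in place of $\boldsymbol\Psi$ removes the only source of non-Lipschitz behaviour in the coefficients of \eqref{eq:particle_dyn} and \eqref{eq:auxiliary}. Once Lipschitz continuity and boundedness of all drift and diffusion coefficients (both in the state variables and in the measure argument with respect to $W_2$) are established, the three assertions become applications of the classical McKean--Vlasov well-posedness machinery recalled in Appendix \ref{sec:carmona}, combined with Itô's formula to identify the law as a weak solution of \eqref{eq:general kinetic approx}. First, I would convert the Stratonovich systems \eqref{eq:particle_dyn} and \eqref{eq:auxiliary} into their Itô forms (as in \eqref{eq:particle_dyn_Ito} and Appendix \ref{sec:Ito_conversion}) so as to work inside a standard SDE framework.

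The central preliminary step is to verify that, under the hypotheses $(H\ref{HK})$--$(H\ref{Hmeanfield})$ and by definition of $\tau_{\eps_0}$, the coefficients
\[
b(x,v,m):=\nu(m)\,\mathbf{P}_{v^\perp}\tau_{\eps_0}(x,v,m)+\tfrac12\mathbf{P}_{v^\perp}[(\nabla_v\sigma(m))(x,v)]-2(d-1)\sigma(m)\tfrac{v}{|v|^2},
\]
and $\Sigma(x,v,m):=\sqrt{2\sigma(m)}\,\mathbf{P}_{v^\perp}$ are globally Lipschitz and bounded on $\mathcal{H}$, uniformly on $\{|v|=1\}$. Boundedness follows directly from $(H\ref{HK})$--$(H\ref{Hnu})$, while Lipschitz regularity in $(x,v)$ and in $m$ (for $W_2$) follows from $(H\ref{Hmeanfield})$ together with the observation that $\tau_{\eps_0}$, being a smooth truncation of $\boldsymbol\Psi$ agreeing with it on $\{|\boldsymbol J|\geq\eps_0\}$ and depending smoothly on $m$ through the linear functional $\boldsymbol J[m]$ (whose kernel is in $W^{1,\infty}$ by $(H\ref{HK})$), can be chosen globally Lipschitz on $\mathcal{H}$.

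With these bounds in hand, statement (ii) is the key one and I would address it first. Define the map $\Phi:C([0,T];\mathcal{P}_2(\Omega\times\R^d))\to C([0,T];\mathcal{P}_2(\Omega\times\R^d))$ sending $(f_t)$ to the law of the solution of the (now linear) SDE obtained by freezing the measure in the coefficients. The Lipschitz bounds allow a standard Grönwall-type estimate showing that $\Phi$ is a contraction on short intervals in the supremum Wasserstein distance; iterating gives global existence and pathwise uniqueness of the auxiliary process, whose law $f_t$ is a probability measure concentrated on $\{|v|=1\}$ by Lemma \ref{lem:norm_1_particles}. For (i), the same argument applied to the $N$-particle system -- where the measure variable is replaced by the empirical measure $\mu^N_t$, which only makes the coefficients Lipschitz in $(X^{i,N},V^{i,N})_i$ -- yields pathwise global existence and uniqueness, again with $|V^{i,N}_t|=1$ by the same Lemma. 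Finally, for (iii), I would apply Itô's formula to $\varphi(\bxin_t,\boin_t)$ for $\varphi\in C^\infty_c(\Omega\times\R^d)$, take expectations, and use $f_t=\mathrm{law}(\bxin_t,\boin_t)$ to recognise the weak formulation of \eqref{eq:general kinetic approx}; uniqueness of this weak solution follows from the uniqueness of the auxiliary process, since any weak solution $g_t$ can be plugged back as frozen measure to produce an SDE whose unique law is both $g_t$ and the fixed point of $\Phi$.

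The main obstacle is the joint Lipschitz regularity of $\tau_{\eps_0}$ in the measure variable with respect to $W_2$: while $\boldsymbol J[m]$ depends linearly and $W_2$-Lipschitz on $m$ through the kernel $\mathbf{K}$ (by $(H\ref{HK})$), one must carefully design $\tau_{\eps_0}$ so that the transition between $\boldsymbol\Psi$ on $\{|\boldsymbol J|\geq\eps_0\}$ and its extension on $\{|\boldsymbol J|<\eps_0\}$ preserves global Lipschitz regularity on $\mathcal{H}$; a suitable smooth cutoff $\chi_{\eps_0}(|\boldsymbol J[m]|)$ applied to $\boldsymbol J[m]/|\boldsymbol J[m]|_\alpha$ and extended by zero does the job. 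The second technical point is the extra drift \eqref{eq:extra_term} together with the Itô correction $-2(d-1)\sigma(m)v/|v|^2$ arising from the Stratonovich-to-Itô conversion: these are handled by $(H\ref{Hmeanfield})$, which is precisely designed to ensure that $\nabla_v\sigma$ is bounded and $W_2$-Lipschitz, and by restricting to the sphere $|v|=1$, which is left invariant by the flow.
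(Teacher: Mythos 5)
Your strategy coincides with the paper's: convert to It\^o form, check that all coefficients are Lipschitz and bounded on $\mathcal{H}$ with respect to $W_2$, invoke the classical SDE and McKean--Vlasov well-posedness results of Appendix \ref{sec:carmona}, verify invariance of the unit sphere, and identify the law of the auxiliary process with a weak solution of \eqref{eq:general kinetic approx} via It\^o's formula. Two points deserve comment. First, on part (iii) you take a genuinely different route for uniqueness: you propose to plug an arbitrary weak solution $g_t$ back into the SDE as a frozen measure and conclude from uniqueness of the McKean--Vlasov process, whereas the paper simply applies its own PDE fixed-point theorem (Theorem \ref{theo:local cauchy}) to the non-singular kinetic equation \eqref{eq:non-singular-kinetic}, obtaining global existence and uniqueness directly because $\nu$ and $\tau_0$ are bounded. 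Your route is viable but requires an extra ingredient you do not name: uniqueness for the \emph{linear} Fokker--Planck equation with frozen coefficients in a class containing both $g_t$ and the law of the frozen SDE (this is what Theorem \ref{theo:cauchy linear} supplies), so that $g_t$ is forced to be the law of a process before the McKean--Vlasov uniqueness can bite. The paper's route avoids this identification step entirely and is shorter.

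Second, there is an ordering issue in your Lipschitz verification. The terms $\mathbf{P}_{v^\perp}=\mathrm{Id}-v\otimes v/|v|^2$ and $v/|v|^2$ are singular at $v=0$, so the coefficient $b$ you write down is \emph{not} globally Lipschitz on $\mathcal{H}$, and ``Lipschitz uniformly on $\{|v|=1\}$'' is not a hypothesis the existence theorems (Theorems \ref{th:existence_particles_carmona} and \ref{th:existence_auxiliary_problem_nonlinear}) can use: you cannot invoke invariance of the sphere before a solution exists. The paper resolves this by first replacing $v$, $\mathbf{P}_{v^\perp}$ and $v/|v|^2$ by globally Lipschitz truncations $\gamma$, $\tau_1$, $\tau_2$ that agree with the originals for $|v|\geq 1/2$ (and replacing $\tau_{\eps_0}(x,v,m)$ by $\tau_0(x,v,m)=\tau_{\eps_0}(x,\gamma(v),m)$, shown Lipschitz in Lemma \ref{lem:lipschitz_property}), solving the truncated system, and only \emph{then} checking via Lemma \ref{lem:norm_1_particles} that $|V^{i,N}_t|=1$ so the truncations are never active. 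You clearly have the sphere-invariance mechanism in mind, so this is a fixable presentational gap rather than a wrong idea, but as written the application of the Lipschitz theory is circular.
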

\bigskip

Along with this well-posedness result, we rigorously show its  mean-field limit towards the approximated kinetic equation \eqref{eq:general kinetic approx}.

\bigskip
\begin{theorem}[Mean-field limit for the approximated system - propagation of chaos]
\label{th:convergence_process}
Under the assumptions of Theorem \ref{theo:local cauchy} and assumption (H4), for the respective solutions $(\xin_t,\oin_t)_{t\geq 0}$ and $(\bxin_t, \boin_t)_{t\geq 0}$ of \eqref{eq:particle_dyn} and \eqref{eq:auxiliary},  for any $T>0$, it holds
\be \label{eq:limit_particles} 
\lim_{N\to\infty}\sup_{{1\leq i\leq N}}\mathbb{E}\big[\sup_{0\leq t\leq T} \big(|\xin_t-\bxin_t|^2 + |\oin_t - \boin_t|^2\big) \big] =0.
\ee
\end{theorem}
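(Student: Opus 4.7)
The plan is to use a standard Sznitman-type coupling argument, exploiting the fact that the particle system \eqref{eq:particle_dyn} and the auxiliary process \eqref{eq:auxiliary} are driven by the same Brownian motions $(B^i_t)$ and start from the same initial data. First I would convert both systems to It\^o form (using the conversion sketched in the remark preceding Lemma \ref{lem:norm_1_particles}) so that stochastic calculus and martingale inequalities apply directly. Let $\bar\mu^N_t := \frac{1}{N}\sum_{i=1}^N \delta_{(\bxin_t,\boin_t)}$ denote the empirical measure associated with the auxiliary processes; by construction the $(\bxin_t,\boin_t)$ are i.i.d.\ with law $f_t$, so $\bar\mu^N_t$ serves as the natural intermediate comparison between $\mu^N_t$ and $f_t$.

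Set $D^i_t := |\xin_t-\bxin_t|^2 + |\oin_t-\boin_t|^2$. Writing the SDE for $\oin_t - \boin_t$ and applying It\^o's formula to $D^i_t$, I would decompose each drift term by the triangle inequality into two pieces: a \textbf{coupling piece}, where the coefficient is evaluated along the coupled processes with the same measure argument $\bar\mu^N_t$, and a \textbf{fluctuation piece}, where only the measure argument differs ($\bar\mu^N_t$ vs $f_t$). For example,
\begin{equation*}
\bigl|\tau_{\eps_0}(\xin_t,\oin_t,\mu^N_t) - \tau_{\eps_0}(\bxin_t,\boin_t,f_t)\bigr|
\leq C\bigl(|\xin_t-\bxin_t|+|\oin_t-\boin_t|+W_2(\mu^N_t,\bar\mu^N_t)\bigr) + C\, W_2(\bar\mu^N_t,f_t),
\end{equation*}
using that $\tau_{\eps_0}$ is Lipschitz in all its arguments (this uses the construction of $\tau_{\eps_0}$ as a regularization of $\boldsymbol\Psi$ together with (H\ref{HK})), and similarly for $\nu$, $\sigma$ and $\nabla_v\sigma$ thanks to (H\ref{Hmeanfield}). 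The crucial observation is that
\begin{equation*}
W_2(\mu^N_t,\bar\mu^N_t)^2 \leq \frac{1}{N}\sum_{j=1}^N D^j_t,
\end{equation*}
since the coupled pair provides an admissible transport plan.

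Next I would handle the diffusion part. The Brownian increments cancel in the formal difference, but a martingale remainder of the form $\int_0^t (\sqrt{2\sigma(\mu^N_s)}-\sqrt{2\sigma(f_s)})\,\mathbf{P}_{(\oin_s)^\perp}\,dB^i_s$ appears, together with a projector-difference term $(\mathbf{P}_{(\oin_s)^\perp}-\mathbf{P}_{(\boin_s)^\perp})$ applied to $\sqrt{2\sigma(f_s)}\,dB^i_s$. Both are controlled by the Burkholder-Davis-Gundy inequality: the square integrand is bounded by a Lipschitz factor squared, which in turn is bounded by $D^i_t + W_2(\bar\mu^N_t,f_t)^2 + W_2(\mu^N_t,\bar\mu^N_t)^2$ via (H\ref{Hsigma}) and (H\ref{Hmeanfield}). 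Taking expectation, supremum over $[0,T]$, and symmetrizing over $i$, I obtain
\begin{equation*}
\mathbb{E}\Bigl[\sup_{0\leq s\leq t}\bigl(|\xin_s-\bxin_s|^2+|\oin_s-\boin_s|^2\bigr)\Bigr]
\leq C_T\int_0^t \frac{1}{N}\sum_{j=1}^N\mathbb{E}\Bigl[\sup_{0\leq u\leq s}D^j_u\Bigr]\,ds + C_T\int_0^t \mathbb{E}[W_2(\bar\mu^N_s,f_s)^2]\,ds.
\end{equation*}
Gr\"onwall's lemma (using exchangeability so that the left-hand side coincides with $\frac{1}{N}\sum_{j}\mathbb{E}[\sup D^j]$ up to a constant) then reduces the theorem to showing $\int_0^T \mathbb{E}[W_2(\bar\mu^N_s,f_s)^2]\,ds \to 0$ as $N\to\infty$.

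The main obstacle, and the one genuinely non-routine step, is this last convergence: it is the propagation-of-chaos rate for the Wasserstein distance between the empirical measure of i.i.d.\ samples and their common law. I would invoke the Fournier--Guillin quantitative law of large numbers for $W_2$, which requires the finite second moment assumption on $f_0$, propagated to $f_t$ uniformly on $[0,T]$ via the existence result Theorem \ref{th:existence_solutions}(iii) and standard moment estimates coming from the bounded-velocity property $|\boin_t|=1$ (so only the spatial moment must be controlled, which is done by $|\bxin_t|\leq |\xin_0|+t$). This yields $\mathbb{E}[W_2(\bar\mu^N_s,f_s)^2] \to 0$ uniformly in $s\in[0,T]$, closing the argument and establishing \eqref{eq:limit_particles}.
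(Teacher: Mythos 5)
Your argument is correct and follows essentially the same route as the paper: the authors verify that the (regularized) coefficients are Lipschitz and bounded on $\mathcal{H}$ and then invoke the general propagation-of-chaos theorem of \cite{carmona2016lectures} (Theorem \ref{th:particle_approximation_carmona}), whose proof is precisely the Sznitman coupling / Gr\"onwall / empirical-law-of-large-numbers argument you spell out. The only bookkeeping you elide is that $\mathbf{P}_{v^\perp}$ and the It\^o correction $v/|v|^2$ are singular at $v=0$ and must be replaced by globally Lipschitz truncations $\tau_1,\tau_2$ (harmless since $|V^{i,N}_t|=|\bar V^{i,N}_t|=1$ by Lemma \ref{lem:norm_1_particles}), and that the qualitative law of large numbers for $W_2$ (Lemma \ref{lem:law_large_number}, requiring only finite second moments) suffices in place of a quantitative Fournier--Guillin rate.
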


From this we deduce a mean-field limit result for the Vicsek kinetic equation:
\begin{cor}[Mean-field limit for the Vicsek kinetic equation]
\label{cor:mean-field}
Suppose that the assumptions in Th. \ref{theo:local cauchy} and assumption (H4) hold. Let $f_t$ be the local-in-time solution of \eqref{eq:general kinetic orthogonal} given by Th. \ref{theo:local cauchy} for $t\in [0,T)$. Then, there exists and $\eps_0>0$ such that the law of the auxiliary process \eqref{eq:auxiliary} is precisely $f_t$ for any $t\in[0,T)$. Consequently, Th. \ref{th:convergence_process} holds for $(\bar X^{i}_t, \bar V^{i}_t)$ having law $f_t$ solution to the kinetic Vicsek equation \eqref{eq:general kinetic orthogonal}.
\end{cor}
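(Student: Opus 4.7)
The plan is to reduce the statement to a direct application of Theorem \ref{th:convergence_process} by identifying the law of the auxiliary process \eqref{eq:auxiliary} with the Vicsek kinetic solution $f_t$, through a judicious choice of the regularization threshold $\eps_0$. First I would invoke Theorem \ref{theo:local cauchy} and, in particular, the quantitative lower bound \eqref{eq:boundJf} valid on $[0,T]$ for any $T<T_1$: namely
$$
\inf_{t\in[0,T],\,(x,\omega)\in\Omega\times\S^{d-1}}\abs{\boldsymbol J[f_t](x,\omega)}\;\geq\; c_*(T) \;=\; J_0 - K_\infty M_0 T \;>\;0.
$$
Choosing then any $\eps_0\in(0,c_*(T))$, the defining relation \eqref{eq:taudefinition} of the truncated operator yields $\tau_{\eps_0}(x,\omega,f_t)=\boldsymbol\Psi[f_t](x,\omega)$ for every $(t,x,\omega)\in[0,T]\times\Omega\times\S^{d-1}$. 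Therefore on that time slab the nonlinear Fokker--Planck terms of \eqref{eq:general kinetic orthogonal} and \eqref{eq:general kinetic approx} agree along $f_t$, and $f_t$ is a weak solution of the approximated kinetic equation with the same initial datum $f_0$.

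Next I would invoke the uniqueness statement of Theorem \ref{th:existence_solutions}(iii): the approximated kinetic equation admits a unique global weak solution, which is precisely the law of the auxiliary process \eqref{eq:auxiliary}. Since $f_t$ is such a weak solution on $[0,T]$ by the previous paragraph, uniqueness forces $\mathrm{law}(\bxin_t,\boin_t)=f_t$ for every $t\in[0,T]$. Once this identification is established, Theorem \ref{th:convergence_process} applied to the coupled systems \eqref{eq:particle_dyn} and \eqref{eq:auxiliary} produces the desired propagation-of-chaos statement for the Vicsek kinetic solution itself, which is the announced mean-field limit.

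The only non-routine obstacle is the uniform-in-time lower bound on $\abs{\boldsymbol J[f_t]}$, since everything else in the proof is a clean identification and invocation of uniqueness. Estimate \eqref{eq:boundJf} delivers this lower bound on $[0,T]$ as soon as $T<T_1$, which covers the short-time regime implicit in the statement. To push the argument closer to the maximal existence time $T_{\max}$ one would exhaust $[0,T_{\max})$ by an increasing sequence of compact sub-intervals on which $\inf\abs{\boldsymbol J[f_\cdot]}$ remains strictly positive, choosing a separate $\eps_0$ on each; the global existence criterion (b) of Theorem \ref{theo:local cauchy} shows that such a refinement is possible exactly while the flux does not collapse, and the identification then propagates to the full interval by uniqueness on each piece.
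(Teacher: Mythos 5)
Your proposal is correct and follows essentially the same route as the paper: pick $\eps_0$ strictly below the positive lower bound on $\abs{\boldsymbol J[f_t]}$ over the compact time interval so that $\tau_{\eps_0}$ coincides with $\boldsymbol\Psi$ along $f_t$, identify $f_t$ with the law of the auxiliary process via the uniqueness in Theorem \ref{th:existence_solutions}, and then invoke Theorem \ref{th:convergence_process}. The only cosmetic difference is that the paper defines $\eps_0$ directly as $\inf_{t\in[0,T_0]}\inf_{(x,v)}\abs{\mathbf{J}(t,x,v,f_t)}$ for an arbitrary $T_0<T$, which plays the role of your explicit constant $c_*(T)$ together with the exhaustion-by-compact-subintervals remark.
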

\begin{proof}
Take $T_0 \in [0,T)$ and define
$$\eps_0 := \inf_{t\in[0, T_0]} \inf_{(x,v)}|\mathbf{J}(t,x,v,f_t)|>0.$$
Then, with this value of $\eps_0$, the approximated kinetic equation \eqref{eq:general kinetic approx} coincides with the Vicsek kinetic equation \eqref{eq:general kinetic orthogonal} for $t\in[0,T_0]$. Therefore,  by Th. \ref{th:existence_solutions},   the solution of the Vicsek kinetic equation $f_t$ is the law of the auxiliary process \eqref{eq:auxiliary} for $t\in [0,T_0]$ and Th. \ref{th:convergence_process} applies.
\end{proof}
Cor. \ref{cor:mean-field} implies the mean-field limit of the approximated dynamics \eqref{eq:particle_dyn} towards the Vicsek kinetic equation \eqref{eq:general kinetic orthogonal} for short times (in the case $\alpha\neq 0$, then $T=+\infty$ and the mean-field limit holds for all times).

\bigskip
\begin{remark}[Convergence of the measures]
Expression \eqref{eq:limit_particles} ensures the convergence as $N\to\infty$ of the law of the process $(X^{i,N}_t, V^{i,N}_t)$ towards $f_t$ for any $i$ and $t\in [0,T]$. See the notes after Th. 1.1 in \cite{BolCanCar1} for more details. In particular, in \cite{BolCanCar1} it is shown the following upper bound in 2-Wasserstein distance
$$W_2^2(f^{(1)}_t, f_t) \leq \mathbb{E}\left[ |X^{i}_t - \bar X^{i}_t|^2 + |V^{i}_t - \bar V^{i}_t|^2\right] \leq \eps(N),$$
where $\eps(N)\to 0$ as $N\to \infty$ (by Th. \ref{th:convergence_process}). The function $f^{(1)}$ denotes the first marginal of the $N$ particle system. They also show that for any  Lipschitz map $\varphi$
\be \label{eq:l2conv}
\mathbb{E}\left[\left| \frac{1}{N}\sum^N_{i=1} \varphi(X^i_t, V^i_t) - \int \varphi \, df_t \right|^2 \right] \leq \eps(N) + \frac{C}{N}
\ee
for some constant $C>0$ independent of $N$.
\\
Under more regularity assumptions, one can obtain explicit estimates on $\eps(N)$. See Th. 10 in Ref. \cite{carmona2016lectures} (Sec. 1.3.4).
\end{remark}

\subsubsection{The Vicsek particle system when $\alpha=0$}

All the previous results for the mean-field limit correspond only to the approximated system \eqref{eq:particle_dyn}, which is not singular when $\alpha =0$. When the norm of the flux 
\be \label{eq:desiredestimate}
|\mathbf J(t, X^{i,N}, V^{i,N}, \mu^N_t)|~\geq ~\eps_0,
\ee
for all $t\in[0,T]$ and all $i=1,\hdots, N$,
 the approximated particle dynamics \eqref{eq:particle_dyn} coincides with the Vicsek particle dynamics \eqref{eq:particle_dyn_Vicsek} for $t\in[0,T]$.

When $\alpha =0$ (normalized case),  every realization of the `approximated' particle system \eqref{eq:particle_dyn} such that $|\mathbf J(t, X^{i,N}_t, V^{i,N}_t, \mu^N_t)|>\eps_0$ for all $t \in [0,T]$ and all $i=1,\hdots N$ is also a solution to the Vicsek particle system. The next result shows that for short times \eqref{eq:desiredestimate} happens  with a probability which tends to one in the many-particle limit:
\begin{theorem}[Lower bound on the flux for the normalized case]
\label{th:NormalizedCase}
Suppose that we are under the assumptions of Th. \ref{theo:local cauchy} and assumption (H4). Consider a time $T<T_1$ (where $T_1$ is defined in \eqref{T1}) and $c_*=c_*(T)>0$ given by \eqref{eq:defcstart}. Then, for all $t\in [0,T]$ and all $\eps_0\in (0, c_*)$ it holds that
\be \label{eq:highprob}
\mathbb{P}(\inf_{t\in[0,T]}\inf_{(x,v)}|\mathbf J(t,x,v, \mu^N_t)|>\eps_0) \geq 1- \frac{1}{c_*-\eps_0}\, \eps(N),
\ee
 where $\eps(N)\to 0$ as $N\to \infty$. 
\end{theorem}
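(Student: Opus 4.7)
The plan is to translate the pointwise lower bound on the kinetic flux $|\mathbf{J}(\cdot, \cdot, \cdot, f_t)| \geq c_*$ coming from \eqref{eq:boundJf} into an analogous lower bound for the empirical flux $\mathbf{J}(\cdot, \cdot, \cdot, \mu^N_t)$ via a quantitative mean-field comparison, and then conclude with Markov's inequality.

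First, I would invoke Corollary~\ref{cor:mean-field} to identify $f_t$ on $[0, T]$ with the law of the auxiliary process \eqref{eq:auxiliary}, so that by \eqref{eq:boundJf} we have $|\mathbf{J}(t, x, v, f_t)| \geq c_*$ uniformly on $[0, T] \times \Omega \times \mathbb{S}^{d-1}$. The triangle inequality then gives the event inclusion
$$\br{\exists\, (t, x, v):\; |\mathbf{J}(t, x, v, \mu^N_t)| \leq \eps_0} \subseteq \br{\sup_{t, x, v} |\mathbf{J}(t, x, v, \mu^N_t) - \mathbf{J}(t, x, v, f_t)| \geq c_* - \eps_0},$$
and Markov's inequality reduces the task to showing that $\eps(N) := \mathbb{E}\cro{\sup_{t, x, v} |\mathbf{J}(t, x, v, \mu^N_t) - \mathbf{J}(t, x, v, f_t)|}$ tends to $0$.

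Next I would use the crucial observation that by $(H\ref{HK})$ the map $(x_*, \omega_*) \mapsto \mathbf{K}(t, x, x_*, v, \omega_*)$ is Lipschitz with a constant uniform in $(t, x, v)$. Via the Kantorovich-Rubinstein duality, this delivers the needed uniformity in $(x, v)$ \emph{for free}:
$$\sup_{x, v} |\mathbf{J}(t, x, v, \mu) - \mathbf{J}(t, x, v, \mu')| \leq C\, W_2(\mu, \mu')$$
for all probability measures $\mu, \mu'$. Introducing the empirical measure $\bar \mu^N_t = \frac{1}{N}\sum_i \delta_{(\bar X^i_t, \bar V^i_t)}$ of the auxiliary process, I would decompose $W_2(\mu^N_t, f_t) \leq W_2(\mu^N_t, \bar \mu^N_t) + W_2(\bar \mu^N_t, f_t)$. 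The plug-in coupling bounds $W_2(\mu^N_t, \bar \mu^N_t)^2 \leq \frac{1}{N}\sum_i (|X^{i,N}_t - \bar X^{i,N}_t|^2 + |V^{i,N}_t - \bar V^{i,N}_t|^2)$, whose sup-in-$t$ expectation vanishes by exchangeability and Theorem~\ref{th:convergence_process}.

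The \emph{main obstacle} will be controlling $\mathbb{E}\cro{\sup_t W_2(\bar \mu^N_t, f_t)}$: pointwise-in-$t$ convergence of the empirical measure of i.i.d.\ samples to its common law is classical (Glivenko-Cantelli / Dudley on the compact sphere, with the torus compactness of $\Omega$, or, in the whole space case, the $L^2$ integrability of $\mathbf{K}$ from $(H\ref{HK})$ together with the finite second moment of $f_t$ providing tail control), but upgrading this to uniform-in-$t$ convergence requires a chaining argument. I would cover $[0, T]$ by a $\delta(N)$-net, exploit the $W^{1, \infty}_t$ regularity of $\mathbf{K}$ from $(H\ref{HK})$ to interpolate between net points at cost $O(\delta(N))$, and apply pointwise concentration at the finitely many net points via a union bound; choosing $\delta(N) \to 0$ slowly enough then yields $\eps(N) \to 0$ and the claimed probability estimate $\mathbb{P}(\inf_{t,x,v} |\mathbf{J}(t,x,v,\mu^N_t)| > \eps_0) \geq 1 - \eps(N)/(c_* - \eps_0)$ follows.
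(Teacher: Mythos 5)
Your architecture coincides with the paper's proof almost line for line: the reduction via the triangle inequality to the event $\{\sup_{t,x,v}|\mathbf J(t,x,v,\mu^N_t)-\mathbf J(t,x,v,f_t)|\geq c_*-\eps_0\}$, Markov's inequality, the Kantorovich--Rubinstein bound $\sup_{x,v}|\mathbf J(t,x,v,\mu)-\mathbf J(t,x,v,\mu')|\leq C(\mathbf K)W_1(\mu,\mu')$ uniform in $(x,v)$, the splitting $W_1(\mu^N_t,f_t)\leq W_1(\mu^N_t,\bar\mu^N_t)+W_1(\bar\mu^N_t,f_t)$, and the treatment of the first term via \eqref{eq:distance_empirical} and Theorem \ref{th:convergence_process} are all exactly what the paper does. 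The only place you diverge is the term $\mathbb{E}[\sup_{t\in[0,T]}W_1(\bar\mu^N_t,f_t)]$. The paper disposes of it without any chaining: it views the auxiliary trajectories $(\bar X^i_t,\bar V^i_t)_{t\in[0,T]}$ as i.i.d.\ random variables valued in the Banach space $\mathcal{C}=C([0,T],\Omega\times\S^{d-1})$ with common law $f_{[0,T]}$, observes that $\sup_{t}W_2(\bar\mu^N_t,f_t)\leq W_2^{(T)}(\bar\mu^N_{[0,T]},f_{[0,T]})$ for the path-space Wasserstein distance, and applies the law of large numbers for empirical measures (Lemma \ref{lem:law_large_number}, valid on $\mathcal{C}$) in one stroke. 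This buys the uniformity in $t$ for free, whereas your route has to manufacture it by hand.

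Within your chaining alternative there is a concrete gap. To interpolate between net points you invoke only the $W^{1,\infty}_t$ regularity of $\mathbf K$, but the map $t\mapsto\mathbf J(t,x,v,\bar\mu^N_t)-\mathbf J(t,x,v,f_t)$ depends on $t$ through $\bar\mu^N_t$ and $f_t$ as well; controlling $W_1(\bar\mu^N_t,\bar\mu^N_s)\leq\frac1N\sum_i|\bar Z^i_t-\bar Z^i_s|$ for $|t-s|\leq\delta(N)$ requires a uniform modulus of continuity of the SDE paths (which, because of the Brownian part, is of order $\sqrt{\delta\log(1/\delta)}$ rather than the $O(\delta)$ you claim), and this ingredient is absent from your sketch. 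Moreover, the union bound over the $O(1/\delta(N))$ net points needs a concentration estimate at each $t_k$ that is quantitative, or at least uniform in $t_k$; Lemma \ref{lem:law_large_number} as used in the paper is rate-free, so "choosing $\delta(N)\to0$ slowly enough" presupposes a uniform-in-$t$ rate for $\mathbb{E}[W_2(\bar\mu^N_t,f_t)^2]$ that you would have to establish separately (e.g.\ via the quantitative bounds of Theorem 10 in \cite{carmona2016lectures}). These points are repairable, but as written the chaining step does not close; the path-space law of large numbers is the cleaner and the intended argument.
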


Define $A_{T,\eps_0}^{(N)}$ as the event such that:
$$A_{T,\eps_0}^{(N)}=\{\mbox{for all $t\in[0,T]$ and all $i=1,\hdots, N$, $\left| \boldsymbol J(t,X^{i,N}_t, V^{i,N}_t, \mu^N_t)\right| >\eps_0$}\}.$$
 Then, as a consequence of Th. \ref{th:NormalizedCase}, it holds that for all $\eps_0 \in (0,c_*)$
\be \label{eq:boundParticles}
\mathbb P \lp A_{T,\eps_0}^{(N)} \rp \geq 1- \frac{1}{c_*-\eps_0}\eps(N), 
\ee
and so with a probability which tends to one in the many-particle limit, in the sense of \eqref{eq:boundParticles}, realizations of the approximated particle dynamics \eqref{eq:particle_dyn} will have non-zero flux for short times and they will also be solutions to the general Vicsek particle system \eqref{eq:particle_dyn_Vicsek}.

%

\begin{remark}
Proving the well-posedness and the rigorous mean-field limit for the Vicsek particle system \eqref{eq:particle_dyn_Vicsek} in the normalized case ($\alpha =0$) seems to at least require (using the current strategy)  to show
$$|\mathbf J(t,x,v, \mu^N_t)|>0 \mbox{ a.s.,}$$
for all $N$ large enough, for all times in some interval and all values of $(x,v)$. It is not clear if this is even true. This would require some kind of generalization of a strong law of large numbers. Such strong law of large numbers exists for infinite exchangeable sequences as a result of de Finetti's theorem (see \cite{kingman1978uses}). However, to the best of our knowledge, there is not an analogous result for a hierarchy of finite exchangeable sequences. 

\end{remark}

\section{The mesoscopic framework: kinetic differential equation}\label{sec:kinetic}

\subsection{Linear equation: dependence on coefficients and averaging positivity}\label{sec:linear fokker planck}

The differential operator $\nabla_\omega$ is the tangential gradient - also called G\"unter derivatives - on the sphere $\S^{d-1}$. It can be related to the standard gradient on $\R^d$ by
$$\nabla_\omega = \nabla_v - \langle \nabla_v, \omega \rangle_{\R^d}\: \omega = \mathbf{P}_{\omega^\bot}\cro{\nabla_v}$$
where $\nabla_v$ is the euclidean gradient for functions from $\R^d$ to $\R^d$. First of all let us emphasize that the tangential gradient $\nabla_\omega$ displays very different behaviour than the usual gradient. We give here three formulas that we shall use throughout the proofs. We refer the interested reader to \cite{Dud,BenTor} for an introduction on tangential - G\"unter- derivatives and to \cite[Appendix II]{OttTza} for the specific calculus of the following formulas (proven in dimension $3$ but immediately extendable in dimension $d$). Integration by parts is allowed but generate an additional term in the direction of the orientation $\omega$:
\begin{equation}\label{eq:IBP tangential derivative}
\int_{\S^{d-1}}\nabla_\omega \pa{f(\omega)}g(\omega)d\omega = - \int_{\S^{d-1}}f(\omega) \nabla_\omega \pa{g(\omega)}d\omega + (d-1)\int_{\S^{d-1}} \omega\, fg \:d\omega.
\end{equation}
It also holds the following explicit tangential derivatives for each coordinate $\omega_i$ of $\omega= (\omega_1,\dots,\omega_d)$ on the sphere
\begin{equation}\label{eq:tangential derivative omega}
\nabla_\omega\pa{\omega_i} = \pa{\delta_{ij} - \omega_i\omega_j}_{1\leq j \leq d} \quad\mbox{and}\quad \Delta_\omega(\omega_i) = -(d-1)\omega_i
\end{equation}

\bigskip
As mentioned in the introduction, $\eqref{eq:general kinetic orthogonal}$ can be viewed as a nonlinear Fokker-Planck equation on the torus with local and nonlinear viscosity $\sigma$ and drift $\nu$. The purpose of this section is to study the associated linear equation when the nonlinearities are considered as given parameters. In other words we consider here the following differential problem

\begin{equation}\label{eq:linear Fokker-Planck}
\partial_t f + c \omega\cdot\nabla_x f = \nabla_\omega\cdot\pa{\bar{\sigma}(t,x,\omega)\nabla_\omega f + f \bar{\boldsymbol\Psi} (t,x,\omega)}
\end{equation}
where $\bar{\sigma}$ and $\bar{\boldsymbol\Psi}$ are given functions.
\par The issue of existence and uniqueness for $\eqref{eq:linear Fokker-Planck}$ has been solved for instance in \cite[Appendix A]{Deg} for velocities in $\R$ or $\R^2$ for constant $\bar{\sigma}$ with $\bar{\boldsymbol\Psi}$ and $\nabla_\omega \cdot \bar{\boldsymbol\Psi}$ being in $L^\infty_{x,\omega}$. However their methods are directly applicable for velocities in $\S^{d-1}$ as shown in \cite[Lemma 4.1]{GamKan} for constant $\bar{\sigma}$. The case of non-constant $\bar{\sigma}$ is a straightforward adaptation of their proofs if $\bar{\sigma}$ belongs to $L^\infty_{t,x,\omega}$ and is uniformly bounded from below by a constant $\bar{\sigma}_0 >0$ (note that one could also adapt to tangential derivatives standard Galerkin type methods for linear parabolic equations \cite[Chapter 6]{Eva}, as done in \cite{FrouLiu} in the spatially homogeneous case). We thus omit the proof and state:

\bigskip
\begin{theorem}\label{theo:cauchy linear}
Let $\bar{\sigma}$ be in $L^\infty_{t,x,\omega}$ uniformly bounded from below: $\bar{\sigma}(t,x,\omega) \geq \bar{\sigma}_0>0$ and let $\bar{\boldsymbol\Psi}$ and $\nabla_\omega \cdot \bar{\boldsymbol\Psi}$ be in $L^\infty_{t,x,\omega}$ with the orthogonal property $\mathbf{P}_{\omega^\bot}(\bar{\boldsymbol\Psi})=\bar{\boldsymbol\Psi}$.
\\ Then for any $f_0$ in $L^2_{x,\omega}$ there exists a unique $f$ in the space
$$Y = \br{u \in L^2_{t,x}H^1_\omega, \quad \partial_t u + \omega \cdot \nabla_x u \in L^2_{t,x}H^{-1}_\omega}$$
solution to $\eqref{eq:linear Fokker-Planck}$ with initial datum $f_0$.
\\Moreover, if $f_0$ is non-negative then $f$ is non-negative.
\end{theorem}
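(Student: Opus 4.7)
The plan is to follow the standard Galerkin approximation scheme in the orientation variable combined with $L^2$ energy estimates on the sphere, in the spirit of \cite{Deg,GamKan,FrouLiu}, with mild adaptations to handle the non-constant but bounded viscosity $\bar\sigma$. Throughout I will use the tangential integration-by-parts formula \eqref{eq:IBP tangential derivative} and the orthogonality hypothesis $\mathbf{P}_{\omega^\bot}(\bar{\boldsymbol\Psi}) = \bar{\boldsymbol\Psi}$, which together guarantee that the drift part of the bilinear form is controlled by $\|\bar{\boldsymbol\Psi}\|_{L^\infty}$ and $\|\nabla_\omega\cdot\bar{\boldsymbol\Psi}\|_{L^\infty}$.

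For existence I would proceed as follows. Fix an orthonormal basis $(Y_k)_{k\geq 1}$ of $L^2(\S^{d-1})$ made of spherical harmonics, which is simultaneously an orthogonal basis of $H^1(\S^{d-1})$. Seek approximate solutions of the form $f_N(t,x,\omega) = \sum_{k=1}^N a^N_k(t,x)\,Y_k(\omega)$ where the coefficients $a^N_k$ solve the linear first-order transport system in $(t,x)$ obtained by testing the equation against $Y_1,\dots,Y_N$; this is a standard linear symmetric hyperbolic system with bounded (in $(t,x)$) coefficients coming from $\bar\sigma$ and $\bar{\boldsymbol\Psi}$, so it is globally well-posed (for instance by characteristics or a further Galerkin step in $x$ when $\Omega = \T^d$, and by mollification and an approximation argument when $\Omega = \R^d$). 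Testing against $f_N$ itself, the transport term $c\,\omega\cdot\nabla_x f_N$ drops out after integrating in $x$ (since $\omega$ is independent of $x$), and the parabolic-plus-drift part gives, using $\bar\sigma\geq\bar\sigma_0$ and Young's inequality,
\begin{equation*}
\tfrac{1}{2}\tfrac{d}{dt}\|f_N\|_{L^2_{x,\omega}}^2 + \tfrac{\bar\sigma_0}{2}\|\nabla_\omega f_N\|_{L^2_{x,\omega}}^2 \leq C\bigl(\bar\sigma_0,\|\bar{\boldsymbol\Psi}\|_{L^\infty},\|\nabla_\omega\!\cdot\!\bar{\boldsymbol\Psi}\|_{L^\infty}\bigr)\|f_N\|_{L^2_{x,\omega}}^2.
\end{equation*}
Gronwall yields uniform bounds in $L^\infty_tL^2_{x,\omega}\cap L^2_{t,x}H^1_\omega$, and bounding the right-hand side of the equation in $L^2_{t,x}H^{-1}_\omega$ provides a uniform bound for $\partial_t f_N + c\,\omega\cdot\nabla_x f_N$ in that space. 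Weak compactness in the Hilbert space $Y$ then furnishes a limit $f$, and linearity of the equation allows us to pass to the limit in the weak formulation.

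Uniqueness is obtained directly on the space $Y$: for two solutions with the same datum, their difference $g\in Y$ vanishes at $t=0$ and solves the homogeneous linear equation; testing against $g$ is justified because $Y$ admits the standard integration-by-parts in time identity $\langle\partial_t g + c\,\omega\cdot\nabla_x g,g\rangle_{L^2_{t,x}H^{-1}_\omega\times L^2_{t,x}H^1_\omega} = \tfrac{1}{2}\|g(t)\|_{L^2_{x,\omega}}^2 - \tfrac{1}{2}\|g(0)\|_{L^2_{x,\omega}}^2$ (after the usual regularization/density argument), which combined with the same energy inequality and Gronwall gives $g\equiv 0$. For positivity, I would decompose $f = f_+ - f_-$ with $f_- = \max(-f,0)$; using the Stampacchia chain rule in $Y$ one tests the equation against $-f_-$ and derives the same type of Gronwall inequality for $\|f_-\|_{L^2_{x,\omega}}^2$, starting from $f_-(0)=0$, so $f_-\equiv 0$.

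The main technical obstacle is the interaction between the unbounded transport operator $\omega\cdot\nabla_x$ and the parabolic structure, namely justifying the duality pairing $\langle \partial_t f + c\,\omega\cdot\nabla_x f,\,f\rangle$ and making it equal to $\tfrac{1}{2}\tfrac{d}{dt}\|f\|_{L^2_{x,\omega}}^2$. On $\T^d$ this follows from a clean density argument (smooth functions are dense in $Y$ and the identity holds pointwise for them), while on $\R^d$ one additionally truncates in $x$ by a cutoff $\chi_R$ and controls the error by $c\int|\omega\cdot\nabla_x\chi_R|\,f^2\,dxd\omega \to 0$ as $R\to\infty$ using the $L^2_{t,x,\omega}$ bound. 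Once this identity is secured, all the steps above close without difficulty.
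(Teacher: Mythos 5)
Your Galerkin argument is sound in outline, but note that the paper does not actually carry out a proof of Theorem \ref{theo:cauchy linear}: it states the result and defers to \cite[Appendix A]{Deg} and \cite[Lemma 4.1]{GamKan}, whose route is variational --- the equation is cast as $\partial_t f + c\,\omega\cdot\nabla_x f = $ (a bilinear form that is coercive on $L^2_{t,x}H^1_\omega$ thanks to $\bar\sigma\geq\bar\sigma_0$) and J.-L. Lions' existence theorem for the space $Y$ delivers existence, uniqueness, the initial trace and the Green identity in one stroke. The Galerkin scheme you propose is exactly the alternative the authors flag in passing (``one could also adapt to tangential derivatives standard Galerkin type methods\dots as done in \cite{FrouLiu} in the spatially homogeneous case''). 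What your route buys is constructiveness and a transparent view of where each hypothesis enters: the tangency $\mathbf{P}_{\omega^\bot}\bar{\boldsymbol\Psi}=\bar{\boldsymbol\Psi}$ kills the extra $(d-1)\int\omega fg$ term in \eqref{eq:IBP tangential derivative}, and $\bar\sigma\geq\bar\sigma_0$ with $\bar{\boldsymbol\Psi},\nabla_\omega\cdot\bar{\boldsymbol\Psi}\in L^\infty$ closes the energy estimate. What it costs is that the two facts the variational framework provides for free --- the pairing identity $\langle\partial_t g + c\,\omega\cdot\nabla_x g,\,g\rangle = \tfrac12\|g(t)\|_{L^2_{x,\omega}}^2-\tfrac12\|g(0)\|_{L^2_{x,\omega}}^2$ on $Y$ and the existence of an $L^2_{x,\omega}$ initial trace --- must be proved by hand; you correctly isolate this as the main obstacle, and it is the content of the kinetic trace lemmas underlying \cite{Deg}. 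Two points to tighten: the projected system for the coefficients $a^N_k$ is a symmetric hyperbolic system whose transport matrices do not commute, so ``by characteristics'' is not available --- use instead Duhamel with the unitary group generated by the constant-coefficient principal part and a Picard iteration on the bounded zeroth-order terms; and the Stampacchia truncation for positivity requires the renormalization property of the free transport operator to justify testing against $-f_-$ when the right-hand side only lies in $L^2_{t,x}H^{-1}_\omega$.
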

\bigskip

The next Section \ref{subsec:gain regularity} is dedicated to $L^p$ bounds and gain of regularity for solutions to $\eqref{eq:linear Fokker-Planck}$ whilst Section \ref{subsec:dependence coef} studies the dependences on the viscosity and friction. To conclude, Section \ref{subsec:averaging positivity} tackles the issue of an explicit lower bound on the vanishing time for velocity averaging densities.
\bigskip


\subsubsection{$L^p$ bounds and gain of regularity}\label{subsec:gain regularity}

The issue of $L^p_{x,\omega}$ estimates and gain of regularity had already been investigated in previous cited references for particular $\bar{\sigma}$ and $\bar{\boldsymbol\Psi}$ and we provide here an adapted version that fits our general setting.

\bigskip
\begin{prop}\label{prop:Lp linear}
Let $p$ belong to $[2,+\infty]$ and $f_0\geq 0$ be in $L^1_{x,\omega}\cap L^p_{x,\omega}$ with mass $\norm{f_0}_{L^1_{x,\omega}}=M_0$. Under the assumptions of Theorem \ref{theo:cauchy linear} on $\bar{\sigma}$ and $\bar{\boldsymbol\Psi}$ the solution $f$ to $\eqref{eq:linear Fokker-Planck}$ is in $L^p_{x,\omega}$ and  preserves the mass $M_0$. More precisely it satisfies
$$\forall t\geq 0,\quad \norm{f(t)}_{L^p_{x,\omega}} \leq e^{C_p(\bar{\sigma},\bar{\boldsymbol\Psi})t} \norm{f_0}_{L^p_{x,\omega}},$$
and  it also gains regularity in $\omega$ when $p<+\infty$ in the following sense:
$$\int_0^te^{C_p(\bar{\sigma},\bar{\boldsymbol\Psi})(t-s)}\norm{f^{\frac{p-2}{2}}\nabla_\omega f(s)}^2_{L^2_{x,\omega}}\norm{f(s)}^{1-p}_{L^p_{x,\omega}}ds \leq \frac{2\norm{f_0}_{L^p_{x,\omega}}}{(p-1)\bar{\sigma}_0}e^{C_p(\bar{\sigma},\bar{\boldsymbol\Psi})t},$$
where 
\begin{equation}\label{def:Cp}
C_p(\bar{\sigma},\bar{\boldsymbol\Psi}) =\norm{\nabla_\omega\cdot\bar{\boldsymbol{\Psi}}}_{L^\infty_{t,x,\omega}}+\frac{\norm{\bar{\boldsymbol\Psi}}^2_{L^\infty_{t,x,\omega}}}{(p-1)\bar{\sigma}_0},
\end{equation}
with $C_\infty = \lim_{p\to\infty} C_p$.
\end{prop}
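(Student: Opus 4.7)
The proof follows the classical energy method for parabolic equations, adapted to the tangential calculus on $\S^{d-1}$. To start, I would handle mass preservation by integrating $\eqref{eq:linear Fokker-Planck}$ over $\Omega\times\S^{d-1}$: the transport term is a pure divergence in $x$, while the spherical divergence integrates to zero upon applying the IBP formula $\eqref{eq:IBP tangential derivative}$ with test function $\varphi\equiv 1$, the orthogonality $\mathbf{P}_{\omega^\bot}\bar{\boldsymbol\Psi}=\bar{\boldsymbol\Psi}$ killing the boundary contribution $(d-1)\int_{\S^{d-1}}\omega\cdot(\bar\sigma\nabla_\omega f + f\bar{\boldsymbol\Psi})\,d\omega$. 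Non-negativity of $f$ is inherited from Theorem $\ref{theo:cauchy linear}$.

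For the $L^p$ estimate with $p\in[2,+\infty)$, the plan is to multiply $\eqref{eq:linear Fokker-Planck}$ by $p f^{p-1}$ and integrate on $\Omega\times\S^{d-1}$. The transport contribution still vanishes since $pf^{p-1}\,\omega\cdot\nabla_x f = \omega\cdot\nabla_x f^p$, and tangential IBP on the sphere (boundary term vanishing by tangentiality of $\bar\sigma\nabla_\omega f + f\bar{\boldsymbol\Psi}$) yields
\begin{equation*}
\frac{d}{dt}\norm{f}_{L^p_{x,\omega}}^p = -p(p-1)\int \bar\sigma f^{p-2}|\nabla_\omega f|^2\,dxd\omega - p(p-1)\int f^{p-1}\bar{\boldsymbol\Psi}\cdot\nabla_\omega f\,dxd\omega.
\end{equation*}
The cross term is then split across two treatments. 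A second IBP, using $p f^{p-1}\nabla_\omega f = \nabla_\omega f^p$ together with the tangentiality of $\bar{\boldsymbol\Psi}$, turns a fraction of it into $(p-1)\int f^p\nabla_\omega\cdot\bar{\boldsymbol\Psi}$ and produces the $\norm{\nabla_\omega\cdot\bar{\boldsymbol\Psi}}_\infty$ contribution to $C_p$; Young's inequality applied to the complementary fraction, with factors $f^{(p-2)/2}|\nabla_\omega f|$ and $f^{p/2}|\bar{\boldsymbol\Psi}|$ and a parameter tuned so that only a controlled portion of the natural dissipation $p(p-1)\bar\sigma_0\int f^{p-2}|\nabla_\omega f|^2$ is absorbed, yields the $\norm{\bar{\boldsymbol\Psi}}_\infty^2/((p-1)\bar\sigma_0)$ contribution. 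Combined with $\bar\sigma\geq\bar\sigma_0$, this gives the Gr\"onwall-type inequality
\begin{equation*}
\frac{d}{dt}\norm{f}_{L^p}^p + \tfrac{p(p-1)\bar\sigma_0}{2}\norm{f^{(p-2)/2}\nabla_\omega f}_{L^2_{x,\omega}}^2 \leq p\, C_p(\bar\sigma,\bar{\boldsymbol\Psi})\norm{f}_{L^p}^p,
\end{equation*}
with $C_p$ as in $\eqref{def:Cp}$. Gr\"onwall and the $p$-th root produce the first assertion.

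For the gain of regularity, I would rearrange the dissipation to the left-hand side, divide by $p\norm{f}_{L^p}^{p-1}$ (using $\frac{d}{dt}\norm{f}_{L^p} = \frac{1}{p\norm{f}_{L^p}^{p-1}}\frac{d}{dt}\norm{f}_{L^p}^p$), and multiply by the integrating factor $e^{-C_p t}$; the resulting inequality on $e^{-C_p t}\norm{f}_{L^p}$ integrates cleanly on $[0,t]$, and the boundary term $e^{-C_p t}\norm{f(t)}_{L^p}\geq 0$ is dropped to give the announced bound involving $\frac{2\norm{f_0}_{L^p}}{(p-1)\bar\sigma_0}e^{C_p t}$. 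For the $p=+\infty$ case, I would either pass to the limit $p\to\infty$ in the $L^p$ bound, exploiting $C_\infty = \norm{\nabla_\omega\cdot\bar{\boldsymbol\Psi}}_\infty$, or, more directly, apply a parabolic maximum principle to $e^{-C_\infty t}f$ viewed as a subsolution of the linear drift-diffusion equation obtained by expanding the divergence on the right-hand side of $\eqref{eq:linear Fokker-Planck}$.

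The main obstacle I anticipate is justifying these formal manipulations, since $f$ is a priori only a weak $L^2$-solution by Theorem $\ref{theo:cauchy linear}$ and $f^{p-1}$ is not an admissible test function for $p>2$. The standard remedy is to replace it by a truncation such as $\min(f,n)^{p-2}(f-\eta)_+$, perform the computation on this regularized object to produce uniform bounds, and then pass to the limits $n\to\infty$, $\eta\to 0^+$ using monotone convergence. Care is required to ensure that the Young parameter and the constants absorbed into the dissipation remain consistent with the announced form of $C_p$ uniformly in $p$, so that both the $p$-independent time $T_{\max}$ and the $p=+\infty$ limit survive the passage to the limit.
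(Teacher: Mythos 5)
Your proposal is correct and follows essentially the same route as the paper: an $L^p$ energy estimate using the tangential integration-by-parts formula (with tangentiality of $\bar\sigma\nabla_\omega f+f\bar{\boldsymbol\Psi}$ killing the extra $(d-1)\int\omega(\cdot)$ terms), Young's inequality calibrated to absorb only half of the dissipation $p(p-1)\bar\sigma_0\int f^{p-2}|\nabla_\omega f|^2$ so that $C_p$ takes the stated form, Gr\"onwall with the dissipation kept on the left for the regularity gain, the limit $p\to\infty$ for the $L^\infty$ bound, and direct integration for mass conservation. The only cosmetic differences are that the paper expands $\nabla_\omega\cdot(f\bar{\boldsymbol\Psi})$ rather than splitting the cross term into an integrated-by-parts fraction plus a Young fraction, and it justifies the formal computation via a linear iterative approximation scheme (each iterate being in $L^p$) rather than via truncated test functions; both yield the same estimate.
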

\bigskip

\begin{proof}[Proof of Proposition \ref{prop:Lp linear}]
\textbf{Step 1: $L^p$ bounds and gain of regularity.} Note that since $\nabla_\omega$ and $\bar {\boldsymbol{\Psi}}$ are orthogonal to $\omega$, we can perform  on $\eqref{eq:linear Fokker-Planck}$ integration by parts on $\S^{d-1}$ as if we were working with classical derivatives. Indeed, the orthogonality to $\omega$ exactly cancels the additional term in the tangential derivatives integration by parts formula $\eqref{eq:IBP tangential derivative}$. It yields for any $p\geq 2$,
\begin{equation*}
\begin{split}
I:=\frac{1}{p}&\frac{d}{dt}\norm{f(t)}^p_{L^p_{x,\omega}} = \int_{\Omega  \times \S^{d-1}} \mbox{sgn}(f(t,x,v))\abs{f(t,x,v)}^{p-1}\partial_t f(t,x,\omega)\:dxd\omega
\\&= -  \int_{\Omega \times\S^{d-1}}\mbox{sgn}(f)\abs{f}^{p-1}\nabla_x(c\omega f) -\int_{\Omega \times \S^{d-1}} \bar{\sigma}(t,x,\omega)\nabla_\omega \pa{\mbox{sgn}(f)\abs{f}^{p-1}}\cdot\nabla_\omega f
\\&\quad+\int_{\Omega \times \S^{d-1}} \mbox{sgn}(f)\abs{f}^{p-1} \nabla_\omega\cdot \pa{f \bar{\boldsymbol{\Psi}}}\:dxd\omega
\end{split}
\end{equation*}
We use that the first integrand on the right-hand side can be written as a divergence in $x$ : $\nabla_x\cdot (c\omega \abs{f}^p)$ whereas the second one can be written as $(p-1)\bar{\sigma}(t,x,\omega)\abs{f}^{p-2} \pa{\nabla_\omega f}^2$. This leads to the following upper bound
\begin{equation*}
\begin{split}
I\leq& - (p-1)\bar{\sigma}_0\norm{f^{\frac{p-2}{2}}\nabla_\omega f}_{L^2_{x,\omega}}^2 + \int_{\Omega\times\S^{d-1}} \pa{\abs{f}^p\nabla_\omega\cdot\bar{\boldsymbol\Psi}+ \bar{\boldsymbol\Psi} \abs{f}^{p-1}\abs{\nabla_\omega f}}dxd\omega
\\\leq& - (p-1)\bar{\sigma}_0\norm{f^{\frac{p-2}{2}}\nabla_\omega f}_{L^2_{x,\omega}}^2 +\norm{\nabla_\omega\cdot\bar{\boldsymbol{\Psi}}}_{L^\infty_{t,x,\omega}}\norm{f}^p_{L^p_{x,\omega}}+ \norm{\bar{\boldsymbol{\Psi}}}_{L^\infty_{t,x,\omega}}\norm{f}^{\frac{p}{2}}_{L^p_{x,\omega}}\norm{f^{\frac{p-2}{2}}\nabla_\omega f}_{L^2_{x,\omega}}
\\\leq& - \pa{(p-1)\bar{\sigma}_0 - \frac{\eta}{2}\norm{\bar{\boldsymbol{\Psi}}}^2_{L^\infty_{t,x,\omega}}}\norm{f^{\frac{p-2}{2}}\nabla_\omega f}_{L^2_{x,\omega}}^2 + \pa{\norm{\nabla_\omega\cdot\bar{\boldsymbol{\Psi}}}_{L^\infty_{t,x,\omega}}+\eta^{-1}}\norm{f}^p_{L^p_{x,\omega}}.
\end{split}
\end{equation*}
Note that we used Cauchy-Schwarz and Young inequality for any $\eta >0$. Choosing $\frac{p-1}{2}\bar{\sigma}_0 - \frac{\eta}{2}\norm{\bar{\boldsymbol\Psi}}^2_{L^\infty_{t,x,v}} = 0$ end up with
\begin{eqnarray*}
\frac{d}{dt}\norm{f(t)}_{L^p_{x,\omega}}&=& \frac{1}{p} \frac{d}{dt}\norm{f(t)}^p_{L^p_{x,\omega}}\norm{f(t)}^{1-p}_{L^p_{x,\omega}}
\\&\leq& -\frac{(p-1)}{2}\bar{\sigma}_0\norm{f^{\frac{p-2}{2}}\nabla_\omega f}_{L^2_{x,\omega}}^2\norm{f}^{1-p}_{L^p_{x,\omega}} + C_p\norm{f}_{L^p_{x,\omega}}.
\end{eqnarray*}
From which we deduce thanks to Gr\"onwall lemma that for all $t\geq 0$
\begin{equation}\label{gain regularity finish}
\norm{f(t)}_{L^p_{x,\omega}}+ \frac{(p-1)}{2}\bar{\sigma}_0\int_0^t e^{C_p(t-s)} \norm{f^{\frac{p-2}{2}}(s)\nabla_\omega f(s)}^2_{L^2_{x,\omega}}\norm{f(s)}^{1-p}_{L^p_{x,\omega}}ds\leq e^{C_p t}\norm{f_0}_{L^p_{x,\omega}}.
\end{equation}
The latter is exactly the standard bounds in Proposition \ref{prop:Lp linear} for $p$ in $[2,+\infty)$, obtained as an \textit{a priori} estimate. The belonging to $L^p$ for $f$ follows standard methods (also used in \cite{GamKan}) where our kinetic equation is approximated by a linear iterative scheme for which each solution $f_n$ is in $L^p$ and \eqref{gain regularity finish} holds at each step. Then the uniqueness of the kinetic solution implies that $f_n\rightarrow f$ and taking the limit in \eqref{gain regularity finish} shows that $f$ belongs to $L^p$ and satisfies \eqref{gain regularity finish}.

\bigskip
\textbf{Step 2: Mass conservation and $L^\infty$ bounds.}
Now let us suppose that $f_0$ is non-negative, which implies that $f$ is non-negative at all time by Theorem  \ref{theo:cauchy linear}. Hence, it holds that
\begin{eqnarray*}
\frac{d}{dt}\norm{f(t)}_{L^1_{x,\omega}} &=& \frac{d}{dt}\int_{\Omega\times \S^{d-1}} f(t,x,\omega)\:dxd\omega 
\\&=& -\int_{\Omega\times\S^{d-1}}\nabla_x\cdot(c\omega f) + \nabla_\omega\cdot\pa{\bar{\sigma}\nabla_\omega f + f \bar{\boldsymbol\Psi}}dxd\omega = 0
\end{eqnarray*}
because, again, the integration by parts $\eqref{eq:IBP tangential derivative}$ does not generate additional terms in the direction of $\omega$. This concludes the preservation of the $L^1_{x,\omega}$-norm.
\par The $L^\infty_{x,\omega}$ estimates follows straight from the limit $p$ goes to $+\infty$, since $f(t)$ belongs to $L^1_{x,\omega}$.
\end{proof}
\bigskip


\subsubsection{Dependence on the coefficients}\label{subsec:cauchy linear}\label{subsec:dependence coef}

Our strategy to tackle the non-linear equation is \textit{via} a fixed point argument. We thus need to understand how solutions to $\eqref{eq:linear Fokker-Planck}$ differ from one another when they are associated to different coefficients $\bar{\sigma}$ and $\bar{\boldsymbol\Psi}$. The main issue in establishing estimate on $\norm{f_1-f_2}_{L^p_{x,\omega}}$ of two different solutions relies on the fact that the gain of regularity proved in Proposition \ref{prop:Lp linear} is higly non-linear as soon as $p>2$. Moreover, we did not manage to quantify the way the gain of regularity vanishes at initial time so we cannot close a direct $L^\infty_t$ fixed point method. We shall thus only study the dependence on coefficient in $L^2_{t,x}$.

\bigskip
\begin{prop}\label{prop:dependence coef}
Let $f_0$ be a non-negative function in $L^1_{x,\omega}\cap L^2_{x,\omega}$. Suppose that $\bar{\sigma}_1$, $\bar{\sigma}_2$ and $\bar{\boldsymbol\Psi}_1$, $\bar{\boldsymbol\Psi}_2$ satisfy the assumptions of Theorem \ref{theo:cauchy linear}. Let $f_1$ and $f_2$ be the solutions of $\eqref{eq:linear Fokker-Planck}$ associated respectively to the coefficient $(\bar{\sigma}_1,\bar{\boldsymbol\Psi}_1)$ and $(\bar{\sigma}_2,\bar{\boldsymbol\Psi}_2)$ with initial datum $f_0$.
\\Then the following holds for any $t\geq 0$,
\begin{equation*}
\begin{split}
\norm{(f_1-f_2)(t)}_{L^2_{x,\omega}}^2 \leq& \frac{4e^{3C_2t}\norm{f_0}^2_{L^2_{x,\omega}}}{\inf\br{\bar{\sigma_1}+\bar{\sigma_2}}}\int_0^t\norm{\pa{\bar{\boldsymbol\Psi}_1-\bar{\boldsymbol\Psi}_2}(s)}^2_{L^\infty_{x,\omega}}\:ds
\\&+\frac{2}{\inf\br{\bar{\sigma_1}+\bar{\sigma_2}}}\cro{\int_0^t e^{C_2(t-s)}\pa{\norm{\nabla_\omega f_1}^2_{L^2_{x,\omega}}+\norm{\nabla_\omega f_2}^2_{L^2_{x,\omega}}}ds}\norm{\bar{\sigma}_1-\bar{\sigma}_2}_{L^\infty_{t,x,\omega}}^2,
\end{split}
\end{equation*}
where $C_2 = C_2(\bar{\sigma_1},\bar{\boldsymbol\Psi_1})+C_2(\bar{\sigma_2},\bar{\boldsymbol\Psi_2})$ defined in Proposition \ref{prop:Lp linear}.
\end{prop}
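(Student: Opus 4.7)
The plan is to perform a standard $L^2$ energy estimate on the difference $g := f_1 - f_2$, after first symmetrizing the coefficients so that the bounds are controlled by the average of the two dissipation rates. Writing, with obvious notation,
$$\bar\sigma_s = \tfrac{1}{2}(\bar\sigma_1+\bar\sigma_2),\quad \bar\sigma_d = \tfrac{1}{2}(\bar\sigma_1-\bar\sigma_2),\quad \bar{\boldsymbol\Psi}_s = \tfrac{1}{2}(\bar{\boldsymbol\Psi}_1+\bar{\boldsymbol\Psi}_2),\quad \bar{\boldsymbol\Psi}_d = \tfrac{1}{2}(\bar{\boldsymbol\Psi}_1-\bar{\boldsymbol\Psi}_2),$$
one has the identities
$$\bar\sigma_1\nabla_\omega f_1 - \bar\sigma_2\nabla_\omega f_2 = \bar\sigma_s\nabla_\omega g + \bar\sigma_d\nabla_\omega(f_1+f_2),\qquad f_1\bar{\boldsymbol\Psi}_1 - f_2\bar{\boldsymbol\Psi}_2 = g\,\bar{\boldsymbol\Psi}_s + (f_1+f_2)\bar{\boldsymbol\Psi}_d,$$
so that subtracting the two linear equations gives
$$\partial_t g + c\omega\cdot\nabla_x g = \nabla_\omega\cdot\bigl(\bar\sigma_s\nabla_\omega g + \bar\sigma_d\nabla_\omega(f_1+f_2) + g\bar{\boldsymbol\Psi}_s + (f_1+f_2)\bar{\boldsymbol\Psi}_d\bigr).$$

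Next, I would multiply by $g$ and integrate over $\Omega\times\S^{d-1}$. The transport term is a spatial divergence and vanishes. Crucially, since each of $\bar\sigma_s\nabla_\omega g$, $\bar\sigma_d\nabla_\omega(f_1+f_2)$, $g\bar{\boldsymbol\Psi}_s$ and $(f_1+f_2)\bar{\boldsymbol\Psi}_d$ is orthogonal to $\omega$ (the tangential gradients automatically, and $\bar{\boldsymbol\Psi}_i$ by assumption $\mathbf{P}_{\omega^\bot}\bar{\boldsymbol\Psi}_i = \bar{\boldsymbol\Psi}_i$), the boundary term in the tangential integration by parts formula \eqref{eq:IBP tangential derivative} cancels, and one further integrates by parts in the $g\bar{\boldsymbol\Psi}_s$ term via $\int g\bar{\boldsymbol\Psi}_s\cdot\nabla_\omega g = -\tfrac{1}{2}\int g^2\nabla_\omega\cdot\bar{\boldsymbol\Psi}_s$. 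This produces the identity
\begin{equation*}
\frac{1}{2}\frac{d}{dt}\|g\|_{L^2_{x,\omega}}^2 + \int \bar\sigma_s|\nabla_\omega g|^2 = -\int \bar\sigma_d\nabla_\omega g\cdot\nabla_\omega(f_1+f_2) + \tfrac{1}{2}\int g^2\nabla_\omega\cdot\bar{\boldsymbol\Psi}_s - \int(f_1+f_2)\bar{\boldsymbol\Psi}_d\cdot\nabla_\omega g.
\end{equation*}

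I would then apply Young's inequality to the two cross terms containing $\nabla_\omega g$, absorbing half of each into the dissipation $\int \bar\sigma_s|\nabla_\omega g|^2$ and using the pointwise bounds $\bar\sigma_d^2/\bar\sigma_s \leq \|\bar\sigma_1-\bar\sigma_2\|_{L^\infty}^2/(2\inf\{\bar\sigma_1+\bar\sigma_2\})$ and similarly $|\bar{\boldsymbol\Psi}_d|^2/\bar\sigma_s \leq \|\bar{\boldsymbol\Psi}_1-\bar{\boldsymbol\Psi}_2\|_{L^\infty}^2/(2\inf\{\bar\sigma_1+\bar\sigma_2\})$. The middle term is bounded by $(\|\nabla_\omega\cdot\bar{\boldsymbol\Psi}_1\|_{L^\infty}+\|\nabla_\omega\cdot\bar{\boldsymbol\Psi}_2\|_{L^\infty})\|g\|_{L^2}^2 \leq C_2\|g\|_{L^2}^2$. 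After discarding the (nonnegative) leftover dissipation, this yields the differential inequality
$$\frac{d}{dt}\|g\|_{L^2}^2 \leq C_2\|g\|_{L^2}^2 + \frac{\|\bar\sigma_1-\bar\sigma_2\|_{L^\infty}^2}{\inf\{\bar\sigma_1+\bar\sigma_2\}}\bigl(\|\nabla_\omega f_1\|_{L^2}^2+\|\nabla_\omega f_2\|_{L^2}^2\bigr) + \frac{\|\bar{\boldsymbol\Psi}_1-\bar{\boldsymbol\Psi}_2\|_{L^\infty}^2}{\inf\{\bar\sigma_1+\bar\sigma_2\}}\|f_1+f_2\|_{L^2}^2,$$
where I used $\|\nabla_\omega(f_1+f_2)\|^2\leq 2(\|\nabla_\omega f_1\|^2+\|\nabla_\omega f_2\|^2)$.

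To conclude, I invoke Proposition \ref{prop:Lp linear} with $p=2$ to bound $\|f_1+f_2\|_{L^2}^2 \leq 4e^{2C_2 t}\|f_0\|_{L^2}^2$ (using $(a+b)^2\leq 2(a^2+b^2)$), and integrate via Grönwall's lemma with $g(0)=0$, which produces exactly the two terms on the right-hand side of the claimed estimate. The main subtlety in the argument is checking that the tangential integration by parts never generates spurious boundary terms — which hinges on the orthogonality to $\omega$ of every vector field appearing under $\nabla_\omega\cdot$ — and book-keeping the constants carefully so that the $e^{3C_2 t}$ factor emerges from combining the Grönwall constant $e^{C_2 t}$ with the $e^{2C_2 t}$ factor supplied by the a priori $L^2$ bound on $f_1+f_2$. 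The strict limitation to $p=2$ is due to the absence of a linear control on the gain-of-regularity term in Proposition \ref{prop:Lp linear} for $p>2$, as noted in Remark \ref{rem:dependence coef}.
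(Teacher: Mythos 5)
Your proposal is correct and follows essentially the same route as the paper's proof: the same symmetric decomposition of the coefficient differences (the identity $ab-cd=\tfrac12[(a-c)(b+d)+(a+c)(b-d)]$, up to your harmless factor-of-$\tfrac12$ normalisation), the same $L^2$ energy estimate with tangential integration by parts justified by the orthogonality of all vector fields to $\omega$, the same absorption of the two cross terms into the dissipation via Young's inequality, and the same final combination of Gr\"onwall's lemma with the $L^2$ bound of Proposition \ref{prop:Lp linear}. Your constant tracking in fact yields a slightly sharper bound than the stated one, so the claimed inequality follows.
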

\bigskip

\begin{remark}\label{rem:dependence coef}
The main difficulty with dealing with local-in-time $\sigma(f)$ appears in the second term on the right-hand side above. Indeed, from Proposition \ref{prop:Lp linear} we see that $\norm{\nabla_\omega f_i}^2_{L^2_{x,\omega}}$ is integrable on $[0,t]$ and thus $\int_0^t\norm{\nabla_\omega f_i}^2_{L^2_{x,\omega}}$ vanishes when $t$ goes to $0$, thus almost allowing to close a contraction fixed point argument. As we shall see later on, we however need an explicit independence over $\bar{\sigma}$ (other than $\inf\bar{\sigma}$ and $\sup\bar{\sigma}$) to avoid any nonlinear breakdown of contraction theorem in short times. As a result a $L^2([0,T],L^2_{x,\omega})$ setting is required to get an extra integration in time. This explains hypothesis $(H2)$, but one could also ask for more regularity for $\sigma$ and $f_0$ to explicitly estimate the convergence to $0$ of $\int_0^t\norm{\nabla_\omega f_i}^2_{L^2_{x,\omega}}$ with parabolic regularity.
\end{remark}

\begin{proof}[Proof of Proposition \ref{prop:dependence coef}] To shorten notations we will use $\bar{\sigma}^+ = \bar{\sigma}_1+\bar{\sigma}_2$, $\bar{\sigma}^- = \bar{\sigma}_1-\bar{\sigma}_2$, $\bar{\boldsymbol\Psi}^+= \bar{\boldsymbol\Psi}_1+\bar{\boldsymbol\Psi}_2$ and $\bar{\boldsymbol\Psi}^-= \bar{\boldsymbol\Psi}_1-\bar{\boldsymbol\Psi}_2$. Also we denote the constant constructed from $\eqref{def:Cp}$ by $C_2 = \max\br{C_2(\bar{\sigma}_1,\bar{\boldsymbol\Psi}_1),C_2(\bar{\sigma}_2,\bar{\boldsymbol\Psi}_2),C_2(\bar{\sigma}^+,\bar{\boldsymbol\Psi}^+),C_2(\bar{\sigma}^-,\bar{\boldsymbol\Psi}^-)}$.
\par Using the algebraic identity $ab-cd = \frac{1}{2}[(a-c)(b+d) + (a+c)(b-d)]$ we find
\begin{equation*}
\begin{split}
\partial_t\cro{f_1-f_2} +& c \omega \nabla_x\cdot\cro{f_1-f_2}
\\=& \frac{1}{2} \nabla_\omega\cdot\pa{\pa{\bar{\sigma}_1 + \bar{\sigma}_2}\nabla_\omega\cro{f_1-f_2}} + \frac{1}{2} \nabla_\omega\cdot\pa{\cro{f_1-f_2}\pa{\bar{\boldsymbol\Psi}_1 + \bar{\boldsymbol\Psi}_2}} 
\\&+ \frac{1}{2} \nabla_\omega\cdot\cro{\pa{\bar{\sigma}_1 - \bar{\sigma}_2}\nabla_\omega\pa{f_1+f_2}} + \frac{1}{2} \nabla_\omega\cdot\cro{\pa{f_1+f_2}\pa{\bar{\boldsymbol\Psi}_1 - \bar{\boldsymbol\Psi}_2}},
\end{split}
\end{equation*}
We start by bounding the terms in $f_1-f_2$ as in Proposition \ref{prop:Lp linear} and we obtain
\begin{equation}\label{dependence start}
\begin{split}
\frac{1}{2}\frac{d}{dt}\norm{f_1-f_2}^2_{L^2_{x,\omega}}\leq& -\frac{\bar{\sigma}^+_0}{4}\norm{\nabla_\omega(f_1-f_2)}^2_{L^2_{x,\omega}}+ C_2\norm{f_1-f_2}^2_{L^2_{x,\omega}}
\\&+\frac{1}{2}\int_{\Omega\times\S^{d-1}} (f_1-f_2)\nabla_\omega\cdot\cro{\bar{\sigma}^-\nabla_\omega \pa{f_1+f_2} + \bar{\boldsymbol\Psi}^- \pa{f_1+f_2}}dxd\omega.
\end{split}
\end{equation}
where $\bar\sigma_0^+=\inf\{\bar\sigma_1+\bar \sigma_2 \}$. We bound the two terms inside the integral on the right-hand side. First thanks to an integration by parts $\eqref{eq:IBP tangential derivative}$.
$$\int_{\Omega\times\S^{d-1}} (f_1-f_2)\nabla_\omega\cdot\cro{\bar{\sigma}^-\nabla_\omega \pa{f_1+f_2}} dx d\omega= -\int_{\Omega\times\S^{d-1}} \bar{\sigma}^- \nabla_\omega(f_1-f_2)\cdot\nabla_\omega(f_1+f_2) dx d\omega.$$
Cauchy-Schwarz with Young's inequalities yields
\begin{equation}\label{dependence 1}
\begin{split}
\abs{\int_{\Omega\times\S^{d-1}} (f_1-f_2)\nabla_\omega\cdot\cro{\bar{\sigma}^-\nabla_\omega \pa{f_1+f_2}}} \leq& \frac{\bar{\sigma}_0^+}{8}\norm{\nabla_\omega(f_1-f_2)}^2_{L^2_{x,\omega}} 
\\&+ \frac{4\norm{\bar{\sigma}^-}^2_{L^\infty_{x,\omega}}}{\bar{\sigma}_0^+}\norm{\nabla_\omega(f_1+f_2)}^2_{L^2_{x,\omega}}.
\end{split}
\end{equation}
The second integrand in \eqref{dependence start} is dealt with in the same way and we get
\begin{equation}\label{dependence 2}
\begin{split}
\abs{\int_{\Omega\times\S^{d-1}} (f_1-f_2)\nabla_\omega\cdot\cro{\bar{\boldsymbol\Psi}^-\pa{f_1+f_2}}} \leq& \frac{\bar{\sigma}_0^+}{8}\norm{\nabla_\omega(f_1-f_2)}^2_{L^2_{x,\omega}} 
\\&+ \frac{4\norm{\bar{\boldsymbol\Psi}^-}^2_{L^\infty_{x,\omega}}}{\bar{\sigma}_0^+}\norm{(f_1+f_2)}^2_{L^2_{x,\omega}}.
\end{split}
\end{equation}
We then plug $\eqref{dependence 1}$ and $\eqref{dependence 2}$ into $\eqref{dependence start}$ and get
\begin{equation*}
\begin{split}
\frac{1}{2}\frac{d}{dt} \norm{f_1-f_2}^2_{L^2_{x,\omega}} \leq& C_2\norm{f_1-f_2}^2_{L^2_{x,\omega}} 
\\&+ \frac{2\norm{\bar{\sigma}^-}^2_{L^\infty_{x,\omega}}}{\bar{\sigma}_0^+}\norm{\nabla_\omega(f_1+f_2)}^2_{L^2_{x,\omega}} +\frac{2\norm{\bar{\boldsymbol\Psi}^-}^2_{L^\infty_{x,\omega}}}{\bar{\sigma}_0^+}\norm{(f_1+f_2)}^2_{L^2_{x,\omega}}
\end{split}
\end{equation*}
on which we apply Gr\"onwall lemma to obtain
\begin{equation}\label{dependence final}
\begin{split}
\norm{f_1-f_2}^2_{L^2_{x,\omega}} \leq & \frac{2}{\bar{\sigma}_0^+}\norm{\bar{\sigma}^-}^2_{L^\infty_{t,x,\omega}}\int_0^t e^{C_2(t-s)}\norm{\nabla_\omega(f_1+f_2)}^2_{L^2_{x,\omega}}\:ds
\\&+\frac{2}{\bar{\sigma}_0^+}\int_0^t e^{C_2(t-s)}\norm{(f_1+f_2)}^2_{L^2_{x,\omega}}\norm{\bar{\boldsymbol\Psi}^-(s)}^2_{L^\infty_{x,\omega}}ds.
\end{split}
\end{equation}
To conclude we apply Proposition \ref{prop:Lp linear} to bound the second term on the right-hand side, which directly gives the expected result.
\end{proof}
\bigskip


\subsubsection{Estimation of vanishing time for velocity averaging densities}\label{subsec:averaging positivity}

Now we turn to an explicit estimation of the vanishing time of velocity averaging densities. 

\bigskip
\begin{prop}\label{prop:positivity}
Let $p$ belong to $[2,+\infty]$ and $f_0 \geq 0$ be in $L^1_{x,\omega} \cap L^p_{x,\omega}$. Let also $\bar{\sigma}$ and $\bar{\boldsymbol\Psi}$ satisfy the assumptions of Theorem \ref{theo:cauchy linear} supplemented with $\bar{\sigma}$ Lipschitz. Let $f$ be the solution to $\eqref{eq:linear Fokker-Planck}$ associated to $f_0$.
For any $\mathbf{K}$ in $W^{1,\infty}_{t,x_*}W^{2,\infty}_{\omega_*}L^\infty_{x,\omega}$ denote
$$\boldsymbol J(t,x,\omega) = \pa{J_i(t,x,\omega)}_{1\leq i \leq d} = \int_{\Omega \times \S^{d-1}} f(t,x_*,\omega_*)\mathbf{K}(t,x,x_*,\omega,\omega_*)dx_*d\omega_*.$$
Then the following holds 
$$\forall (t,x,\omega) \in \R^+\times\Omega\times\S^{d-1},\quad \abs{\boldsymbol J(t,x,\omega)} \geq \abs{ \mathbf{J}(0,x,\omega)}-K_\infty\norm{f_0}_{L^1_{x,\omega}}t$$
where we defined 
\begin{equation}\label{eq:Kinfty}
K_\infty = \sqrt{\sum\limits_{i=1}^dK_{i,\infty}^2}
\end{equation}
with 
\begin{equation}\label{eq:Kiinfty}
K_{i,\infty} = \pa{1+\abs{c}+\norm{\bar{\sigma}}_{L^\infty_{t,x}W^{1,\infty}_\omega}+\norm{\bar{\boldsymbol\Psi}}_{L^\infty_{t,x,\omega}}}\norm{K_i}_{W^{1,\infty}_{t,x_*}W^{2,\infty}_{\omega_*}L^\infty_{x,\omega}}.
\end{equation}
\end{prop}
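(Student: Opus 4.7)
\smallskip

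\textbf{Proof plan.} The idea is to establish a pointwise uniform bound on the time derivative $\partial_t \boldsymbol J$ and then integrate in time, using reverse triangle inequality to pass from a control on the variation of $\boldsymbol J$ to the desired lower bound. Concretely, writing
\begin{equation*}
\boldsymbol J(t,x,\omega)-\boldsymbol J(0,x,\omega) = \int_0^t \partial_s \boldsymbol J(s,x,\omega)\, ds,
\end{equation*}
the desired estimate will follow once we show, componentwise, that
\begin{equation*}
\abs{\partial_s J_i(s,x,\omega)} \leq K_{i,\infty}\norm{f_0}_{L^1_{x,\omega}},
\end{equation*}
because summing squares and taking square roots yields $\abs{\partial_s \boldsymbol J}\leq K_\infty\norm{f_0}_{L^1_{x,\omega}}$, and the reverse triangle inequality then gives the conclusion.

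\smallskip

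To estimate $\partial_s J_i$, I would differentiate under the integral and split into the contribution $\int f\,\partial_s \mathbf{K}_i$ coming from the time-dependence of the kernel, which is immediately bounded by $\norm{\partial_t\mathbf{K}_i}_{L^\infty}\norm{f(s)}_{L^1_{x,\omega}}$, and the contribution $\int (\partial_s f)\mathbf{K}_i$. For the latter, I substitute the linear Fokker--Planck equation \eqref{eq:linear Fokker-Planck} and integrate by parts to transfer all the derivatives onto $\mathbf{K}_i$, which is the regular object. Specifically, the transport term becomes $\int c\omega_*\cdot\nabla_{x_*}\mathbf{K}_i\, f\, dx_*d\omega_*$; the viscosity term becomes, after one tangential integration by parts on the sphere (which introduces no boundary term because $\bar\sigma\nabla_{\omega_*}f$ is tangential, and then a second one rewriting $\bar\sigma\nabla_{\omega_*}\mathbf{K}_i$ as a tangential field so that $\eqref{eq:IBP tangential derivative}$ applies without the $\omega$-term), an expression involving $f\cdot\nabla_{\omega_*}\bar\sigma\cdot\nabla_{\omega_*}\mathbf{K}_i$ and $f\bar\sigma\,\Delta_{\omega_*}\mathbf{K}_i$; the friction term reduces in a single IBP to $-\int f\bar{\boldsymbol\Psi}\cdot\nabla_{\omega_*}\mathbf{K}_i$, using that $\bar{\boldsymbol\Psi}$ is tangential. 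Bounding each factor by its $L^\infty$ norm in $(\bar\sigma,\nabla_{\omega_*}\bar\sigma,\bar{\boldsymbol\Psi})$ and the corresponding $W^{1,\infty}_{t,x_*}W^{2,\infty}_{\omega_*}$ norm of $\mathbf{K}_i$, and using mass conservation $\norm{f(s)}_{L^1_{x,\omega}}=\norm{f_0}_{L^1_{x,\omega}}$ from Proposition \ref{prop:Lp linear}, one collects exactly the prefactor $(1+\abs{c}+\norm{\bar\sigma}_{L^\infty_{t,x}W^{1,\infty}_\omega}+\norm{\bar{\boldsymbol\Psi}}_{L^\infty_{t,x,\omega}})\norm{\mathbf{K}_i}_{W^{1,\infty}_{t,x_*}W^{2,\infty}_{\omega_*}L^\infty_{x,\omega}}=K_{i,\infty}$.

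\smallskip

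The main obstacle is the rigorous justification of the integrations by parts, split across two cases. On the sphere, one must be careful with the extra $\omega$-term in $\eqref{eq:IBP tangential derivative}$: this is handled by ensuring that at each step the vector field being integrated against is tangential (which is true for $\bar\sigma\nabla_{\omega_*}f$, $\bar\sigma\nabla_{\omega_*}\mathbf{K}_i$, $f\bar{\boldsymbol\Psi}$), so that the correction terms vanish component by component. In the spatial variable, the torus case $\Omega=\T^d$ is automatic; the whole-space case $\Omega=\R^d$ relies on the additional assumption $\mathbf{K}\in L^\infty_{t,x,\omega}L^2_{x_*,\omega_*}$ in $(H\ref{HK})$ combined with $f\in L^2_{x,\omega}$ (from Theorem \ref{theo:cauchy linear}), which makes every integrand in the preliminary identity $\int(\partial_s f)\mathbf{K}_i$ absolutely integrable and legitimates all manipulations by approximation with compactly supported test kernels.
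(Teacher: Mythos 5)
Your proposal is correct and follows essentially the same route as the paper: both substitute the linear equation into $\partial_s J_i$, integrate by parts to transfer every derivative onto $\mathbf{K}_i$ (the tangentiality of $\bar\sigma\nabla_{\omega_*}f$ and $\bar{\boldsymbol\Psi}$ cancelling the extra $\omega$-terms in \eqref{eq:IBP tangential derivative}), and bound the result by $K_{i,\infty}\norm{f_0}_{L^1_{x,\omega}}$ using mass conservation. The only cosmetic difference is the final packaging: you integrate the pointwise bound $\abs{\partial_s\boldsymbol J}\leq K_\infty\norm{f_0}_{L^1_{x,\omega}}$ and apply the reverse triangle inequality, whereas the paper derives $\frac{1}{2}\frac{d}{dt}\abs{\boldsymbol J}^2\geq -K_\infty\norm{f_0}_{L^1_{x,\omega}}\abs{\boldsymbol J}$ via Cauchy--Schwarz on $\sum_i K_{i,\infty}\abs{J_i}$ and integrates that differential inequality.
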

\bigskip

\begin{proof}[Proof of Proposition \ref{prop:positivity}]
Since $f_0$ is non-negative and belongs to $L^1_{x,\omega} \cap L^p_{x,\omega}$ we deduce from Theorem \ref{theo:cauchy linear} and Proposition \ref{prop:Lp linear} that $f(t)$ is non-negative and belongs to $L^1_{x,\omega} \cap L^p_{x,\omega}$ for all time. Hence, we can multiply $\eqref{eq:linear Fokker-Planck}$ by $\mathbf{K}(x,x_*,\omega,\omega_*)$ and integrate by parts $\eqref{eq:IBP tangential derivative}$. Here again note that there are no additional terms along $\omega_*$ since $\bar{\boldsymbol\Psi}$ and $\nabla_{\omega_*}$ are both orthogonal  to $\omega_*$. This yields
\begin{eqnarray*}
&&\hspace{-0.7cm} \frac{1}{2}\frac{d}{dt}\abs{J_i(t,x,\omega)}^2 =  J_i(t,x,\omega)\int_{\Omega \times \S^{d-1}}K_i(t,x,x_*,\omega,\omega_*)\frac{\partial f}{\partial t}(t,x_*,\omega_*)dx_*d\omega_*
\\&&\hspace{-0.7cm} \quad\quad\quad\quad\quad\quad\quad\quad\quad + J_i(t,x,\omega)\int_{\Omega \times \S^{d-1}}f(t,x_*,\omega_*) \frac{\partial K_i}{\partial t}(t,x,x_*,\omega,\omega_*)dx_*d\omega_*
\\&&\hspace{-0.7cm} \quad=J_i(t,x,\omega)\int_{\Omega \times \S^{d-1}}f\cro{\frac{\partial K_i}{\partial t} + c\omega\cdot\nabla_{x_*}K_i + \nabla_{\omega_*}\cdot\pa{\bar{\sigma} \nabla_{\omega_*}K_i} -\bar{\boldsymbol\Psi}\cdot\nabla_{\omega_*}K_i } dx_* d\omega_*
\\&&\hspace{-0.7cm} \quad\geq -\abs{J_i(t,x,\omega)}\pa{1+\abs{c}+\norm{\bar{\sigma}}_{L^\infty_{t,x}W^{1,\infty}_\omega}+\norm{\bar{\boldsymbol\Psi}}_{L^\infty_{t,x,\omega}}}\norm{K_i}_{W^{1,\infty}_{t,x_*}W^{2,\infty}_{\omega_*}L^\infty_{x,\omega}}\norm{f(t)}_{L^1_{x,\omega}}
\\&&\hspace{-0.7cm} \quad\geq - K_{i,\infty} \norm{f_0}_{L^1_{x,\omega}}\abs{J_i(t,x,\omega)}
\end{eqnarray*}
where we used the conservation of mass along time. Summing over $i$ we obtain
\begin{eqnarray*}
\frac{1}{2}\frac{d}{dt}\abs{\mathbf{J}(t,x,\omega)}^2 &\geq& -\norm{f_0}_{L^1_{x,\omega}}\sum\limits_{i=1}^dK_{i,\infty}\abs{J_i(t,x,\omega)}
\\&\geq & -\norm{f_0}_{L^1_{x,\omega}}\sqrt{\sum\limits_{i=1}^dK_{i,\infty}^2}\abs{\mathbf{J}}
\end{eqnarray*}
 The latter implies that
$$\abs{\mathbf{J}(t,x,\omega)} \geq \abs{\mathbf{J}(0,x,\omega)} -\sqrt{\sum\limits_{i=1}^dK_{i,\infty}^2}\norm{f_0}_{L^1_{x,\omega}}t$$
which concludes the proof.
\end{proof}
\bigskip

\subsection{The local-in-time nonlinear Cauchy theory}\label{sec:local cauchy}

The present section is devoted to the proof of Theorem \ref{theo:local cauchy} thanks to the linear study we presented in Sec. \ref{sec:linear fokker planck}. We shall prove existence, uniqueness and global existence criterion to
\begin{equation}\label{eq:local cauchy NL}
\partial_t f + c \omega\cdot\nabla_x f = \nabla_\omega \cdot\pa{\sigma(f) \nabla_\omega f} + \nabla_\omega\cdot\pa{\nu(f)f\mathbf{P}_{\omega^\bot}\cro{\boldsymbol\Psi[f]}}.
\end{equation} 

\bigskip
\begin{proof}[Proof of Theorem \ref{theo:local cauchy}]
We fix $p$ in $[2,+\infty]$ and $f_0 \geq 0$ in $L^1_{x,\omega}\cap L^p_{x,\omega}$. We recall some assumptions of Theorem \ref{theo:local cauchy}:
\begin{eqnarray*}
\int_{\Omega\times\S^{d-1}}f_0(x,\omega)\:dxd\omega &=& M_0 >0
\\ \inf\limits_{(x,\omega) \in \Omega\times\S^{d-1}} \abs{\boldsymbol J[f_0](x,\omega)}_\alpha &=& J_0 > 0.
\end{eqnarray*}
The strategy is to apply a contraction fix point argument so we start by defining  a complete metric space on which we shall work. For any $M\geq M_0$ and $T>0$ we define
\begin{equation}\label{GammaT}
\Gamma^M_T = \br{f \in L^2_{[0,T],x,\omega}\:,\quad\mbox{and} \begin{array}{l}\esssup\limits_{[0,T]}\norm{f(t)}_{L^1_{x,\omega}} \leq M  \\ \esssup\limits_{[0,T]\times\Omega\times\S^{d-1}}\frac{\abs{\nu(f(t))}}{\abs{\boldsymbol J[f(t)](x,\omega)}_\alpha}\leq \frac{\nu_\infty}{\alpha+(1-\alpha)\frac{J_0}{2}}\end{array}}.
\end{equation}
Note that since $J[f]$ is an integral over $(x_*,\omega_*)$ against $\mathbf{K}$ and because $\mathbf{K}$ is $L^2_{x_*,\omega_*}$ (by $(H1)$), a Cauchy sequence for the $L^2$-norm in $\Gamma^M_T$ indeed converges in $\Gamma^M_T$.

For a function $g$ in $\Gamma^M_T$ we have that $g$ belongs to $L^2_{[0,T],x,\omega}$ so by Fubini's theorem $g$ belongs to $L^2([0,T],L^2_{x,\omega})$ and for almost every $t$ in $[0,T]$ the function $g(t,\cdot,\cdot)$ belongs to $L^2_{x,\omega}$. Therefore $\sigma(g(t))$ and $\nu(g(t))$ are defined almost everywhere in $[0,T]$ and with hypotheses $(H2)$ and $(H3)$ one has
$$\sigma_0 \leq \sigma(g(t)) \leq \sigma_\infty \quad\mbox{and}\quad \norm{\nu(g(t))}_{L^\infty_xW^{1,\infty}_\omega} \leq \nu_\infty \quad\mbox{almost everywhere in }[0,T].$$
Defining
$$\bar{\boldsymbol\Psi}(t,x,\omega) = \nu(g(t))(x,\omega)\mathbf{P_{\omega^\bot}}\cro{\boldsymbol\Psi[g]}$$
hypotheses $(H1)$ shows for $t\in[0,T]$
\begin{eqnarray}
\abs{\bar{\boldsymbol\Psi}(t,x,\omega)} &\leq&  \nu_\infty\frac{\abs{\int_{\Omega\times\S^{d-1}}g(t,x_*,\omega_*)\mathbf{K}(t,x,x_*,\omega,\omega_*)dx_*d\omega_*}}{\abs{\boldsymbol J[g](t,x,\omega)}_\alpha} \nonumber
\\&\leq& \nu_\infty\left\{\begin{array}{ll}  \norm{\mathbf{K}}_{L^\infty_{t,x,x_*,\omega,\omega_*}}M &\quad\mbox{if}\quad \alpha =1\\ \frac{1}{1-\alpha}, &\quad\mbox{if}\quad \alpha \neq 1 \end{array}\right. := C^1_M \label{C1M}
\end{eqnarray}
and
\begin{eqnarray}
\abs{\nabla_\omega\cdot \bar{\boldsymbol\Psi}(t,x,\omega)} &\leq& \abs{\nabla_\omega\nu(g) \cdot \mathbf{P_{\omega^\bot}}\cro{\boldsymbol\Psi[g]}} + \abs{\nu(g)\nabla_\omega\cdot \mathbf{P_{\omega^\bot}}\cro{\boldsymbol\Psi[g]}}\nonumber
\\&\leq& C^1_M + \nu_\infty C_\alpha M\norm{\mathbf{K}}_{L^\infty_{t,x,x_*,\omega_*}W^{1,\infty}_{\omega}}:= C^2_M\label{C2M}
\end{eqnarray}
where $C_\alpha >0$ only depends on $\alpha$ and equals $1$ when $\alpha=1$. Therefore thanks to Theorem \ref{theo:cauchy linear} we are able to define
$$\function{\Phi}{\Gamma^M_T}{L^2_{[0,T],x,\omega}}{g}{\Phi(g) = f_g},$$
where $f_g$ is the solution on $[0,T]\times\Omega\times\S^{d-1}$ to the linear equation
$$\partial_t f + c \omega\cdot\nabla_x f = \nabla_\omega\cdot\pa{\sigma(g)(t,x,\omega)\nabla_\omega f + f \nu(g)\mathbf{P_{\omega^\bot}}\cro{\boldsymbol\Psi[g]}}$$
associated to the initial datum $f_0$.

\bigskip
We now show that for a specific $T$, $\Phi$ is in fact a contraction from $\Gamma^M_T$ to $\Gamma^M_T$. Due to Proposition \ref{prop:Lp linear} and since $f_0$ belongs to $L^2_{x,\omega}$ we see that $\Phi(g)$ belongs to $L^\infty([0,T])L^2_{x,\omega}) \subset L^2_{[0,T]\times\Omega\times\S^{d-1}}$ and also that for almost every $t\in[0,T]$
$$\norm{\Phi(g)(t)}_{L^1_{x,\omega}}=M_0 < M.$$
Moreover, Proposition \ref{prop:positivity} gives that almost everywhere
\begin{equation}\label{eq:positive renormalisation}
\abs{\boldsymbol J[\Phi(g)](t,x,\omega)} \geq J_0 -K_\infty M_0 T. 
\end{equation}
We defined $K_\infty$ in $\eqref{eq:Kinfty}$ and we recall it
\begin{equation}\label{eq: Kinfty}
K_\infty(f_0) = \pa{1 + \abs{c}+ \sigma_\infty+\sigma_{\tiny{lip}} + C^1_M}\sqrt{\sum\limits_{n=1}^{d}\norm{K_i}^2_{W^{1,\infty}_{t,x_*}W^{2,\infty}_{\omega_*}L^\infty_{x,\omega}}}.
\end{equation}
Therefore if we choose
\begin{equation}\label{T0}
T < T_0 := \frac{J_0}{2K_\infty M_0},
\end{equation}
the lower bound $\eqref{eq:positive renormalisation}$ implies
$$\forall t \in [0,T],\quad \abs{\boldsymbol J[\Phi(g)](t,x,\omega)}_\alpha \geq \alpha+(1-\alpha)\frac{J_0}{2},$$
so that
$$\forall t \in [0,T],\quad \frac{\abs{\nu(\Phi(g))}}{\abs{\boldsymbol J[\Phi(g)](t,x,\omega)}_\alpha} \leq \frac{\nu_\infty}{\alpha+(1-\alpha)\frac{J_0}{2}}$$
and we thus proved that for any $0<T<T_0$, $\Phi$ maps $\Gamma_T^M$ to itself.

\bigskip
It remains to prove that $\Phi$ is a contraction on $\Gamma_T^M$ for $T$ sufficiently small. By hypothesis $(H\ref{Hsigma})$ we have
$$\norm{\sigma(g_1) - \sigma(g_2)}_{L^\infty_{[0,T],x,\omega}} \leq \sigma_{\mbox{\tiny{lip}}}\norm{g_1-g_2}_{L^2_{[0,T],x,\omega}}.$$
Also, thanks to the Lipschitz property $(H\ref{Hnu})$ of the friction $\nu$ and the kernel form of $\boldsymbol\Psi[g]$ one infers
$$\int_0^t\norm{\bar{\boldsymbol\Psi}(g_1) - \bar{\boldsymbol\Psi}(g_2)}^2_{L^\infty_{x,\omega}} \leq \Psi_{\mbox{\tiny{lip}}}\norm{g_1-g_2}^2_{L^2_{[0,t],x,\omega}}$$
where $\Psi_{\mbox{\tiny{lip}}} >0$ is a constant only depending on $\norm{\mathbf{K}}_{L^\infty_{t,x,x_*,\omega,\omega_*}}$, $M$, $\alpha$, $\Psi_0$ and $\nu_{\mbox{\tiny{lip}}}$. This inequality comes from the algebraic identity $ab-cd = \frac{(a-c)(b+d)}{2} + \frac{(a+c)(b-d)}{2}$. 
\par We apply Proposition \ref{prop:dependence coef} to see that for any $g_1$ and $g_2$ in $\Gamma_T^M$
$$\norm{\Phi(g_1)-\Phi(g_2)}^2_{L^2_{[0,T],x,\omega}} \leq \Lambda(T) \norm{g_1-g_2}^2_{L^2_{[0,T],x,\omega}}$$
where
\begin{equation}\label{Lambda(T)}
\Lambda(T) = \frac{2 \norm{f_0}^2_{L^2_{x,\omega}}}{\sigma^2_0}Te^{3C_2T}\pa{\Psi_{\mbox{\tiny{lip}}}\sigma_0 + 2\sigma_{\mbox{\tiny{lip}}}}.   
\end{equation}
Note that we used Proposition \ref{prop:Lp linear} to obtain explicit constants. To conclude it suffices to choose
\begin{equation}\label{T1}
T < \min\br{T_0,\: \sup\limits_{t>0}\Lambda(t) \leq \frac{1}{4}}:=T_1
\end{equation}
because then for any $g_1$ and $g_2$ in $\Gamma^M_T$ we have proved
$$\norm{\Phi(g_1)-\Phi(g_2)}_{L^2_{[0,T],x,\omega}} \leq \frac{1}{2} \norm{g_1-g_2}_{L^2_{[0,T],x,\omega}},$$
which implies that $\Phi$ is a contraction on $\Gamma^M_T$.

\bigskip
We thus proved the local existence and uniqueness of a solution $f$ to the nonlinear kinetic equation $\eqref{eq:general kinetic orthogonal}$ in $L^2_{[0,T_{\max}),x,\omega}$ with $T_{\max} \geq T_1$. The solution $f$ belongs to $L^\infty\pa{[0,T_{\max}),L^p_{x,\omega}} \cap L^2\pa{[0,T_{\max}),L^2_xH^1_\omega}$ due to Proposition \ref{prop:Lp linear} since $\sigma(f)$ and $\nu(f)$ are well-defined and satisfy the required assumptions, as we saw above. At last, thanks to the global in time Cauchy theory for the linear equation given by Theorem \ref{theo:cauchy linear} we see that if $\lim\limits_{t\to T_{\max}^-}\sup\limits_{\Omega \times\S^{d-1}}\frac{\nu(f)}{\abs{\boldsymbol J[f]}_\alpha}<+\infty$, then we can apply our fixed point argument starting at $T_{\max}$ and therefore $T_{\max}$ must be $+\infty$. Which concludes the proof of Theorem \ref{theo:local cauchy}.
\end{proof}
\bigskip

\subsection{Global existence and decay of the free energy}\label{sec:long time behaviour}

The present section focuses on some important cases where one is ensured to have globally defined solution to the fully non-linear kinetic equation $\eqref{eq:general kinetic orthogonal}$. Then we get a look into the free energy and its energy dissipation.

\bigskip

\subsubsection{Important examples of global solutions}\label{subsec:global examples}

Theorem \ref{theo:local cauchy} offers a direct and easy way to check if the solutions obtained are globally defined. Indeed, under the assumptions $(H1)-(H2)-(H3)$, if one shows that $\inf\limits_{\Omega\times\S^{d-1}}\abs{\boldsymbol J[f]}_\alpha$ cannot vanish along the flow then indeed $\sup\limits_{\Omega\times\S^{d-1}}\frac{\abs{\nu(f)}}{\abs{\boldsymbol J[f]}_\alpha}$ cannot explode in finite time.

\bigskip
\textbf{The non-normalised case: $\mathbf{\boldsymbol\alpha \neq 0}$.} The discussion above implies that when $\alpha \neq 0$ then $\abs{\boldsymbol J[f]}_\alpha \geq \frac{1}{\alpha}>0$ for any function $f$ so that $\boldsymbol\Psi[f]$ is well-defined. Therefore for any $\alpha \neq 0$ the solutions constructed in Theorem \ref{theo:local cauchy} are global in time. This is the case of the equations looked at in \cite{Ons,Doi,FrouLiu} among others.

\bigskip
\textbf{The non-asymptotic normalised case: $\mathbf{\frac{\boldsymbol\nu(f)}{\abs{\boldsymbol J[f]}}}$ prevents any explosion.} Here we are interested in the normalised framework $\alpha =0$ but when the coefficient $\nu(f)$ compensate a possible vanishing of $\abs{\boldsymbol J[f]}$. Namely if we add the assumption that $f\mapsto\frac{\nu(f)}{\abs{\boldsymbol J[f]}}$ is a bounded function when $\abs{\boldsymbol J[f]}$ tends to zero. Again, in this particular case the solutions constructed in Theorem \ref{theo:local cauchy} are global in time. This is the case of equations looked at in \cite{DegFrouLiu1,DegFrouLiu2} where $\nu(f) = \nu(\abs{\mathbf{J}_f})$ with $\frac{\nu(y)}{y}$ bounded and is generalised here for instance to any $\nu(f) = \nu(\abs{\boldsymbol J[f]})$ with $\frac{\nu(y)}{y}$ bounded near zero.

\bigskip
\textbf{The normalised case of kernel with one coordinate with a strict sign.} In this paragraph we assume that the kernel $\mathbf{K}(t,x,x_*,\omega,\omega_*)$ is such that one of its coordinate $K_i$ has a strict sign in the following sense: there exists a positive constant $k_i>0$ such that
$$\inf\limits_{\R^+\times\Omega\times\Omega\times\S^{d-1}\times\S^{d-1}}K_i \geq k_i \quad\mbox{or}\quad\sup\limits_{\R^+\times\Omega\times\Omega\times\S^{d-1}\times\S^{d-1}} K_i \leq -k_i.$$
It is a direct verification since, thanks to the positivity of $f$, either
$$\int_{\Omega\times\S^{d-1}}fK_i\:dx_*d\omega_* \geq k_i M_0 \quad\mbox{or}\quad \int_{\Omega\times\S^{d-1}}f K_i\:dx_*d\omega_* \leq -k_i M_0.$$
Therefore, $J_i(t,x,\omega)$ cannot vanish in finite time and again, in this particular case the solutions constructed in Theorem \ref{theo:local cauchy} are global in time.

\bigskip
\textbf{The Vicsek kernel operator.}  We study the special case when $\mathbf{K}(t,x,x_*,\omega,\omega_*) = \omega_*$ and normalised $\alpha=0$, as in \cite{DegFrouLiu1,DegFrouLiu2,GamKan,FigKanMor} for the spatially homogeneous equation. Note that here we also obtain the global in time for the non-spatially homogeneous setting. We also assume that $\nu$ and $\sigma$ do not depend on $x$ nor $\omega$ and that the friction $\nu$ is negative (standard in all previous studies).
\par Contrary to the setting above, the present kernel can degenerate in the sense that one non-zero coordinate of $\mathbf{K}$ can possibly vanish later on but giving rise to another non-zero coordinate. As a consequence, the full $\abs{\mathbf{K}}$ will not vanish in finite time. First we explicitly write
$$J_i(t,x,\omega) = J_i(t) =\int_{\Omega\times\S^{d-1}}\omega_{i*}f(t,x_*,\omega_*)\, dx_* d\omega_* \quad\mbox{and}\quad \boldsymbol\Psi = \frac{\boldsymbol J}{\abs{\boldsymbol J}}.$$
As before let us consider the computation below for $0 \leq t < T_{\max}$ and recall $\eqref{eq:global equality}$ which in the present setting yields
\begin{equation*}
\frac{1}{2}\frac{d}{dt}\abs{\boldsymbol J}^2 = \sum\limits_{i=1}^d J_i(t) \int_{\Omega \times \S^{d-1}}f\cro{\sigma(f)(t)\Delta_{\omega_*}(\omega_{*i}) -  \nu(f)(t)\mathbf{P}_{{\omega_*^\bot}}[\boldsymbol\Psi]\cdot\nabla_{\omega_*}(\omega_{*i})}.
\end{equation*}
Thanks to the properties of the tangential derivatives we can compute directly with $\eqref{eq:tangential derivative omega}$
\begin{eqnarray*}
\frac{1}{2}\frac{d}{dt}\abs{\boldsymbol J(t)}^2 &=& -(d-1)\sigma(f)(t)\abs{\boldsymbol J(t)}^2 - \nu(f)(t)\int_{\Omega\times\S^{d-1}}\frac{1}{\abs{\boldsymbol J(t)}}\mathbf{P}_{\omega_*^\bot}[\boldsymbol J(t)]\cdot \nabla_{\omega_{*}}\pa{\omega_{*}\cdot\boldsymbol J(t)}
\\&=& -(d-1)\sigma(f)(t)\abs{\boldsymbol J(t)}^2 -\frac{\nu(f)(t)}{\abs{\boldsymbol J(t)}}\int_{\Omega\times\S^{d-1}}\abs{\nabla_{\omega_*}\pa{\omega_*\cdot\boldsymbol J(t)}}^2
\\&\geq&-(d-1)\sigma_0\abs{\boldsymbol J(t)}^2.
\end{eqnarray*}
Note that we use the fact that the friction is negative. Using Gr\"onwall inequality we conclude that $\abs{\boldsymbol J(t)}^2 \geq \abs{\boldsymbol J(0)}^2e^{-2(d-1)\sigma_0t}$.
\par These computations were already done in an homogeneous regularised setting in \cite{FigKanMor} and our local theory allows us to carry them out directly. Such a lower bound thus imply the non-vanishing of $\abs{\boldsymbol J(t)}$ in finite time and thus leading again to global in time solution in this setting.

\bigskip

\subsubsection{Decay of the free energy}\label{subsec:convergence to equilibrium}

We recall our definitions for the free energy and the energy dissipation $\eqref{eq:free energy}-\eqref{eq:energy dissipation}$
\begin{eqnarray*}
\mathcal{F}[f](t) &=& \int_{\Omega \times \S^{d-1}} f\mbox{ln}(f)dxd\omega \nonumber
\\& &+ \int_0^t \int_{\Omega \times \S^{d-1}} f \cro{\nabla_\omega\cdot\pa{\nu(f)\mathbf{P}_{\omega^\bot}[\boldsymbol\Psi[f]]} - \frac{\nu(f)^2}{\sigma(f)}\abs{\mathbf{P}_{\omega^\bot}[\boldsymbol\Psi[f]]}^2}dxd\omega ds
\\\mathcal{D}[f](t) &=& \int_{\Omega\times\S^{d-1}} \sigma(f) f \abs{\nabla_\omega \mbox{ln}(f) + \frac{\nu(f)}{\sigma(f)}\mathbf{P}_{\omega^\bot}[\boldsymbol\Psi[f]]}^2dxd\omega.
\end{eqnarray*}
Let $f_0$, $\sigma$, $\nu$ and $\mathbf{K}$ be as in Theorem \ref{theo:local cauchy}. We now establish the decay of $\mathcal{F}[f](t)$:
\begin{equation}\label{eq:decay F[f]}
\forall t \geq 0,\quad \frac{d}{dt}\mathcal{F}[f](t) = -\mathcal{D}[f](t).
\end{equation}
 It comes from direct computations. Indeed since $\nabla_{x/\omega}\cro{1+\mbox{ln}f} = \frac{\nabla_{x/\omega} f}{f}$ it follows by the integrating by parts formula $\eqref{eq:IBP tangential derivative}$,
\begin{eqnarray*}
\frac{d}{dt}\int_{\Omega \times \S^{d-1}} f\mbox{ln}f(t,x,\omega)dxd\omega &=& \int_{\Omega \times \R^d}\cro{1 + \mbox{ln}f}\partial_t f(t,x,\omega)dxd\omega
\\&=& \int_{\Omega \times \S^{d-1}}c\omega\cdot \nabla_x f\:dxd\omega 
\\&&+ \int_{\Omega \times \S^{d-1}}\cro{\sigma(f)\nabla_\omega f + \nu(f) f \mathbf{P}_{\omega^\bot}[\boldsymbol\Psi[f]]}\cdot \frac{\nabla_{\omega} f}{f}dxd\omega
\\&=& - \int_{\Omega \times \S^{d-1}}\cro{\sigma(f)\frac{\abs{\nabla_{\omega} f}^2}{f}+\nu(f)\mathbf{P}_{\omega^\bot}[\boldsymbol\Psi[f]]\cdot\nabla_\omega f}dxd\omega.
\end{eqnarray*}
Hence,
\begin{eqnarray*}
\frac{d}{dt}\mathcal{F}[f](t)&=& - \int_{\Omega\times\S^{d-1}}\cro{\sigma(f)\frac{\abs{\nabla_{\omega} f}^2}{f}+ 2\nu(f)\mathbf{P}_{\omega^\bot}[\boldsymbol\Psi[f]]\cdot\nabla_\omega f +\frac{\nu(f)^2}{\sigma(f)}\abs{\mathbf{P}_{\omega^\bot}[\boldsymbol\Psi[f]]}^2 f }dxd\omega
\\&=& -\mathcal{D}[f](t)
\end{eqnarray*}
as claimed above.

\bigskip

\section{Mean-field limit: proofs of the results in Section \ref{subsec:particles intro}}\label{sec:particles}

Let us first describe the strategy we are about to implement. At the core, we will use the coupling approach popularised by Sznitmann  \cite{sznitman1991topics} (which differs from classical BBGKY approaches \cite{jabin2017mean}). We will prove theorems \ref{th:existence_solutions} and \ref{th:convergence_process}. We will follow the same steps  in \cite{BolCanCar2} and the results in \cite{carmona2016lectures}. 
	
\bigskip

\subsection{Preliminaries: functional framework and notations.}
\label{sec:preliminaries_SDE}

In this section we will work in the space $\mathcal{H}$ defined in Section \ref{sec:functional_framework} and with the 2-Wasserstein distance also defined there. We will need some results on Wasserstein distances in the sequel, that we summarise next (these results and proofs can be found in \cite{villani2008optimal}). In our setting, the Wasserstein distance of order 1 is given by
\beqar
&&W_1(m,p) := \inf \Big\{ \left[\int_{(\Omega\times\R^{d})^2}|z-u|\, \pi(dz, du)\right];\\
&& \qquad\qquad\qquad\qquad\, \pi \in \mathcal{P}(\Omega\times\R^{d})^2\mbox{ with marginals $m$ and $p$}\Big\}.
\eeqar
This distance can be expressed in duality form using the Kantorovich-Rubinstein distance
\be \label{eq:Kantorovich_distance} 
W_1(m,p) = \sup_{\|\varphi\|_{\tiny{lip}}\leq 1} \left\{ \int_{\Omega\times\R^{d}} \varphi(z) dm -\int_{\Omega\times\R^{d}} \varphi(z) dp\right\}.
\ee
Also, by H\"older's inequality, it holds
\be \label{eq:order_W}
W_1\leq W_2.
\ee

\subsection{The non-singular dynamics.}
\label{sec:non-singular-dynamics}
To  prove Theorem \ref{th:existence_solutions} and \ref{th:convergence_process}, we will consider first modified versions of Systems \eqref{eq:particle_dyn} and \eqref{eq:auxiliary} where we work with Lipschitz coefficients. 
Being more precise, firstly we construct $\func{\gamma}{\R^d}{\R^d}$ Lipschitz and bounded with
\be 
\gamma(v) = v, \qquad\mbox{when } |v|\leq 2.
\ee
We will use the function $\gamma$ as a substitute of the variable $V(t)$ in some parts of the equations. We do this to be able to apply known results of existence an uniqueness of solutions for stochastic differential equations that  require the coefficients to be Lipschitz and bounded (see Th. \ref{th:existence_particles_carmona}). We will prove in a second stage that along the dynamics it holds that $|V(t)| =1$, therefore $\gamma(V(t))=V(t)$ and we recover the terms in the original equation.  Secondly, we define a functional $\tau_0$ as
\be \label{eq:def_tau0}
\tau_0(x,v,m) = \tau_{\eps_0}(x,\gamma(v),m).
\ee

We will show that $\tau_0$ is Lipschitz and bounded in the whole $\mathcal{H}$:
\begin{lemma}
\label{lem:lipschitz_property}
The functions $r=r(x,v,m) :=|\mathbf{J}|(x,\gamma(v),m):\mathcal{H}\to \R_+$ and $\tau_0=\tau_0(x,v,m):\mathcal{H}\to \R_+$ are Lipschitz and bounded. 
\end{lemma}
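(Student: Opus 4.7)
The plan is to prove both assertions by exploiting the Lipschitz and bounded structure of the kernel $\mathbf{K}$ from hypothesis $(H\ref{HK})$ together with the Lipschitz bounded cutoff $\gamma$, and for $\tau_0$, a suitable Lipschitz extension of $\boldsymbol{\Psi}$ off the singular set $\{|\mathbf{J}|\le \eps_0\}$.

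First, boundedness is immediate. Since $m\in\mathcal{P}_2(\Omega\times\R^d)$ is a probability measure and $\mathbf{K}$ is uniformly bounded by $(H\ref{HK})$,
$$r(x,v,m) = \Big|\int \mathbf{K}(t,x,x_*,\gamma(v),v_*)\, m(dx_*,dv_*)\Big| \leq \|\mathbf{K}\|_{L^\infty},$$
while $\tau_0$ is bounded provided we fix a definite bounded extension of $\tau_{\eps_0}$; the natural choice is $\tau_{\eps_0}(x,v,m) = \mathbf{J}(x,v,m)/\max(|\mathbf{J}(x,v,m)|_\alpha,\eps_0)$, which agrees with $\boldsymbol{\Psi}$ when $|\mathbf{J}|\geq\eps_0$ (as required in \eqref{eq:taudefinition}) and satisfies $|\tau_{\eps_0}|\leq 1$.

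Next, I would verify Lipschitz continuity of $r$ on $(\mathcal{H},d_\mathcal{H})$ by splitting
$$r(x_1,v_1,m_1) - r(x_2,v_2,m_2) = \big[r(x_1,v_1,m_1) - r(x_2,v_2,m_1)\big] + \big[r(x_2,v_2,m_1) - r(x_2,v_2,m_2)\big]$$
and using the reverse triangle inequality. The first bracket is controlled by the Lipschitz modulus of $\mathbf{K}$ in its $(x,\omega)$ arguments, multiplied by the Lipschitz constant of $\gamma$ (here I rely on the full Lipschitz regularity of $\mathbf{K}$ in all its variables, which is what is meant by the prose statement of $(H\ref{HK})$), yielding an estimate of the form $C(|x_1-x_2|+|v_1-v_2|)$. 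For the measure variable, I would invoke the Kantorovich-Rubinstein duality \eqref{eq:Kantorovich_distance}: for fixed $(x_2,v_2)$, the function $(x_*,v_*)\mapsto \mathbf{K}(t,x_2,x_*,\gamma(v_2),v_*)$ is Lipschitz uniformly in $(x_2,v_2)$ thanks to $(H\ref{HK})$, and hence
$$|\mathbf{J}(x_2,\gamma(v_2),m_1)-\mathbf{J}(x_2,\gamma(v_2),m_2)| \leq \|\mathbf{K}\|_{\text{Lip}(x_*,v_*)}\, W_1(m_1,m_2) \leq \|\mathbf{K}\|_{\text{Lip}(x_*,v_*)}\, W_2(m_1,m_2),$$
where the last step uses \eqref{eq:order_W}. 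Summing these bounds gives Lipschitz continuity of $r$ with respect to $d_\mathcal{H}$.

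Finally, for $\tau_0$, I would factor $\tau_0(x,v,m) = F\big(\mathbf{J}(x,\gamma(v),m)\big)$ where $F:\R^d\to\R^d$, $F(y) = y/\max(|y|_\alpha,\eps_0)$, is globally Lipschitz and bounded on $\R^d$ with Lipschitz constant of order $1/\eps_0$. Since $(x,v,m)\mapsto \mathbf{J}(x,\gamma(v),m)$ has been shown to be Lipschitz and bounded on $\mathcal{H}$ in the previous step, the composition $\tau_0$ is Lipschitz and bounded as well. The only mildly delicate point is the bookkeeping of constants (which depend explicitly on $1/\eps_0$, on $\|\mathbf{K}\|_{W^{1,\infty}}$, and on the Lipschitz norm of $\gamma$); the main conceptual obstacle in the whole argument is really the handling of the $m$-variable through Kantorovich duality, which the estimate $W_1\le W_2$ closes immediately.
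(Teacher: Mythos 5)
Your proof is correct and follows essentially the same route as the paper's: boundedness from the boundedness of $\mathbf{K}$ and the fact that $m$ is a probability measure; the Lipschitz estimate for $r$ via the reverse triangle inequality, the Lipschitz regularity of $\mathbf{K}$ in all its variables (which, as you rightly flag, is only asserted in the prose of (H1)), and Kantorovich--Rubinstein duality together with $W_1\leq W_2$; and the Lipschitz property of $\tau_0$ by writing it as a composition/product of Lipschitz bounded functions with denominator bounded below by $\eps_0$. The only addition is your explicit choice of extension $\tau_{\eps_0}=\mathbf{J}/\max(|\mathbf{J}|_\alpha,\eps_0)$, which the paper leaves implicit.
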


\begin{proof}[Proof of Lemma \ref{lem:lipschitz_property}]
We notice, first, that since $\gamma$ is bounded, then $K(x, \gamma(v),m)$ is bounded in all the variables and, therefore, $r_f(z)$ is bounded in $\mathcal{H}.$ Now, let $f,g\in \mathcal{P}_2(\Omega\times \R^d)$ and fix $z\in \Omega\times \R^d$,  we show next that $r_f:=r(z,f)$ is Lipschitz in $f$. Consider $z,z'\in \Omega\times \R^d$ and for $z=(x,v)$, denote $\tilde{z}=(x, \gamma(v))$. Now, we consider (the integrals are in $\Omega\times \R^d$)
\beqarl
| r_f(z)-r_g(z')|
 &= &\left|\, |\int K(\tilde z,y)f(dy)|- |\int K(\tilde z',y)\, g(dy)| \,\right| \nonumber\\
&\leq& \Big|\int[K(\tilde z,y) f(dy)-K(\tilde z',y) g(dy)]\Big| \nonumber\\
&\leq& \|K\|_{Lip} |\tilde z - \tilde z'|+\left| \int K(\tilde z',y) [f(dy)-g(dy)]\right|\nonumber\\
 &\leq & \|K\|_{Lip} (\|\gamma\|_{Lip}+1) |z - z'|\nonumber\\
 && \qquad \qquad+\|K\|_{Lip}\,  \left| \sup_{\|\varphi\|_{Lip}\leq 1} \lp  \int \varphi(y) f(dy) - \int \varphi(y) g(dy) \rp \right|  \nonumber\\
 &=& \|K\|_{Lip} (\|\gamma\|_{Lip}+1)|z - z'|+ \|K\|_{Lip} W_1(f,g)  \nonumber\\
 &\leq & \|K\|_{Lip} (\|\gamma\|_{Lip}+1)|z - z'|+ \|K\|_{Lip} W_2(f,g) \nonumber
\eeqarl
where in the second line we used the reverse triangle inequality; in the third line we have used that $\int_{\R^{2d}} f =1$ and that $K$ is Lipschitz; in the fourth line we used that $\gamma$ is Lipschitz and choose an arbitrary $z_0\in \R^{2d}$; the fifth line is given by  \eqref{eq:Kantorovich_distance}; and the last inequality follows from \eqref{eq:order_W}. Therefore, we conclude that $r=r_f(z)$ is Lipschitz in $\mathcal{ H}$.

\medskip
Next, we also show that $\tau_0$ is Lipschitz and bounded in $\mathcal{H}$. First, whenever $r\geq \eps_0$ we have that $(\alpha + (1-\alpha)r)^{-1}$ is also Lipschitz and bounded for all $\alpha \in [0,1]$, therefore, $\tau_0$ is product of Lipschitz and bounded functions, hence it is Lipschitz and bounded.
\end{proof}

With the function $\tau_0$ we define the \textit{non-singular particle dynamics} as:
\begin{subequations} \label{eq:non-singular-particle_dyn}
\begin{numcases}{} 
dX^{i,N}_t = V^{i,N}_t dt,\\
dV_{t}^{i,N} = \nu(\mu^N)\, \mathbf{P}_{(V^{i,N})^\perp} (\tau_0(X^{i,N},V^{i,N},\mu^N)) dt \nonumber\\
\qquad\quad+\frac{1}{2} \mathbf{P}_{(\boin)^\perp}[(\nabla_v\sigma(\mu^N))(\bxin,\boin)]\, dt\nonumber\\
\qquad\quad + \sqrt{2\sigma(\mu^N)}\, \mathbf{P}_{(V^{i,N})^\perp} \circ dB^i_t,
\end{numcases}
\end{subequations}
(this is exactly System \eqref{eq:particle_dyn} substituting $\tau_0$ by the function $\tau_{\eps_0}$)
and the \textit{non-singular auxiliary process} is given by:
\begin{subequations}\label{eq:non-singular-auxiliary}
\begin{numcases}{} 
d\bxin_t = \boin_t dt,\\
d\boin = \nu(f)\, \mathbf{P}_{(\boin)^\perp} (\tau_0(\bxin,\boin,f)) dt \nonumber\\
\qquad\quad+\frac{1}{2} \mathbf{P}_{(\boin)^\perp}[(\nabla_v\sigma(f))(\bxin,\boin)]\, dt\nonumber\\
\qquad\quad+ \sqrt{2\sigma(f)}\, \mathbf{P}_{(\boin)^\perp} \circ dB^i_t,\\
f_t = \mbox{law}(\bxin, \boin),
\end{numcases}
\end{subequations}
(again this is exactly System \eqref{eq:auxiliary}  after substituting $\tau_0$ by $\tau_{\eps_0}$).
Analogously, we also consider the \textit{non-singular kinetic equation} in $\Omega \times \mathbb{S}^{d-1}$ given by
\be \label{eq:non-singular-kinetic}
\partial_t f + \omega\cdot \nabla_x f = \nabla_\omega \cdot (\sigma(f) \nabla_\omega f) + \nabla_\omega \cdot(\nu(f) f \nabla_\omega\,\tau_0(f)).
\ee

Note that under the assumptions of Theorem \ref{theo:local cauchy}, \eqref{eq:non-singular-kinetic}  has global existence and uniqueness of solutions in the spaces stated in Theorem \ref{theo:local cauchy}.

\begin{remark}
\label{rem:lipschitz_condition}
Notice that we are assuming that $\nu$, $\sigma$ and $\nabla_v \sigma$ are bounded and Lipschitz for all $v\in \R^d$ rather than $v\in \mathbb{S}^{d-1}$ (see hypothesis (H\ref{Hmeanfield})). However, we just need that $\nu$, $\sigma$ and $\nabla_v \sigma$ to be Lipschitz and bounded in a neighbourhood of $|v|=1$, as we can use regularising arguments as the one done to define the functional $\tau_0$.
\end{remark}

\begin{prop}
\label{prop:results-non-singular}
Theorems \ref{th:existence_solutions} and \ref{th:convergence_process} hold replacing in the statements System \eqref{eq:particle_dyn} by System \eqref{eq:non-singular-particle_dyn}; System \eqref{eq:auxiliary} by System \eqref{eq:non-singular-auxiliary}; and the kinetic equation \eqref{eq:general kinetic orthogonal} by the kinetic equation \eqref{eq:non-singular-kinetic}. 
\end{prop}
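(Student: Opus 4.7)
The plan is to view \eqref{eq:non-singular-particle_dyn} and \eqref{eq:non-singular-auxiliary} as classical McKean--Vlasov SDEs with globally Lipschitz and bounded coefficients (on $\mathcal{H}$), and then apply the Sznitmann coupling approach in the form recorded in Appendix \ref{sec:carmona}. First, I would convert both Stratonovich systems into It\^o form (see Appendix \ref{sec:Ito_conversion}), obtaining a drift made of $\nu(m)\mathbf{P}_{\gamma(v)^\perp}\tau_0(x,v,m)$, the extra $\tfrac12 \mathbf{P}_{\gamma(v)^\perp}(\nabla_v\sigma(m))(x,\gamma(v))$, and the Stratonovich-to-It\^o correction $-2(d-1)\sigma(m)\gamma(v)/|\gamma(v)|^2$, together with a diffusion coefficient $\sqrt{2\sigma(m)}\,\mathbf{P}_{\gamma(v)^\perp}$. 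By Lemma \ref{lem:lipschitz_property}, $\tau_0$ is Lipschitz and bounded on $\mathcal{H}$; by hypothesis $(H\ref{Hmeanfield})$, so are $\sigma$, $\nu$ and $\nabla_v\sigma$; the maps $v\mapsto \gamma(v)$ and $v\mapsto \mathbf{P}_{\gamma(v)^\perp}$ are globally Lipschitz and bounded by construction of $\gamma$. The composite coefficients are therefore Lipschitz and bounded in $(x,v,m)$ with respect to $d_{\mathcal{H}}$.

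For Theorem \ref{th:existence_solutions}, item (i) is now a direct application of the standard pathwise existence and uniqueness theorem for $N$-particle SDEs with Lipschitz and bounded drift and diffusion (Appendix \ref{sec:carmona}); the property $|V^{i,N}_t|=1$ is Lemma \ref{lem:norm_1_particles}, which in turn guarantees that $\gamma(V^{i,N}_t)=V^{i,N}_t$ so that the added regularization does not modify the dynamics on the sphere. Item (ii) follows from Sznitmann's fixed point argument: one fixes a flow $t\mapsto f_t$ in a closed ball of $C([0,T];\mathcal{P}_2(\Omega\times\R^d))$, solves the resulting linear SDE (strong existence and uniqueness), and shows by a standard Gr\"onwall estimate, using only the Lipschitz bounds above, that the map returning the law of the solution is a contraction for $T$ small (then iterates to cover any finite horizon). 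For item (iii), applying It\^o's formula to $\varphi(\bar X^{i,N}_t,\bar V^{i,N}_t)$ for $\varphi\in C^\infty_c$ and taking expectations yields the weak formulation of \eqref{eq:non-singular-kinetic} for $f_t=\mathrm{law}(\bar X^{i,N}_t,\bar V^{i,N}_t)$; uniqueness of this weak solution is a consequence of Theorem \ref{theo:local cauchy} applied with $\boldsymbol\Psi$ replaced by $\tau_0$, since $\tau_0$ satisfies hypotheses $(H\ref{HK})$--$(H\ref{Hnu})$ with no possible degeneracy of the denominator, so $T_{\max}=+\infty$ and the solution is global in the spaces stated in Theorem \ref{th:existence_solutions}.

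For Theorem \ref{th:convergence_process}, coupling the two systems through the same initial data and the same Brownian motions, I would estimate
\[
\Delta^i_t := |X^{i,N}_t-\bar X^{i,N}_t|^2 + |V^{i,N}_t-\bar V^{i,N}_t|^2
\]
by It\^o's formula. Each drift and diffusion difference splits into a Lipschitz-in-$(x,v)$ piece, giving a term bounded by $C\,\Delta^i_t$, and a piece controlled by $d_{\mathcal{H}}$ through $W_2(\mu^N_t,f_t)$. For the latter, I would insert the intermediate empirical measure $\bar\mu^N_t:=\frac1N\sum_j\delta_{(\bar X^{j,N}_t,\bar V^{j,N}_t)}$ and use the triangle inequality
\[
W_2(\mu^N_t,f_t) \leq W_2(\mu^N_t,\bar\mu^N_t) + W_2(\bar\mu^N_t,f_t),
\]
where $W_2^2(\mu^N_t,\bar\mu^N_t)\leq \frac1N\sum_j\Delta^j_t$, and $\mathbb{E}\,W_2^2(\bar\mu^N_t,f_t)\to 0$ by the classical law-of-large-numbers result for empirical measures of i.i.d. samples with finite second moment (Appendix \ref{sec:carmona}). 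Taking supremum in $t$, using the Burkholder--Davis--Gundy inequality to absorb the stochastic integral coming from $\sqrt{2\sigma(\mu^N)}\mathbf{P}_{V^\perp}-\sqrt{2\sigma(f)}\mathbf{P}_{\bar V^\perp}$, and then Gr\"onwall yields \eqref{eq:limit_particles}. The main difficulty, compared to \cite{BolCanCar2}, is precisely the handling of this diffusion difference: since $\sigma(m)$ is only Lipschitz in $m$ (not constant), the square-root and its coupling to $\mathbf{P}_{v^\perp}$ must be estimated carefully using that $\sigma\geq \sigma_0>0$ (so $\sqrt{2\sigma(\cdot)}$ is Lipschitz), and additionally the correction $\nabla_v\sigma$ contributes a further Lipschitz term controlled by $(H\ref{Hmeanfield})$; with these ingredients the Gr\"onwall constant remains independent of $N$, closing the proof.
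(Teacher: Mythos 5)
Your overall route is the same as the paper's (It\^o conversion, cutoff to get globally Lipschitz--bounded coefficients, the classical existence theorems and Sznitman coupling from Appendix \ref{sec:carmona}, It\^o's formula plus Theorem \ref{theo:local cauchy} for item (iii)), but there is one concrete step that fails as written. You claim that ``the maps $v\mapsto \gamma(v)$ and $v\mapsto \mathbf{P}_{\gamma(v)^\perp}$ are globally Lipschitz and bounded by construction of $\gamma$.'' This is false for the second map: since $\gamma(v)=v$ for $|v|\leq 2$, one has $\mathbf{P}_{\gamma(v)^\perp}=\mathbf{P}_{v^\perp}=\mbox{Id}-\tfrac{v\otimes v}{|v|^2}$ in a neighbourhood of the origin, and this matrix has no continuous extension to $v=0$ (its limit along $v=\eps e_1$ differs from its limit along $v=\eps e_2$), hence it is not globally Lipschitz. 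The same problem is worse for your Stratonovich-to-It\^o correction $\sigma(m)\gamma(v)/|\gamma(v)|^2 = \sigma(m)v/|v|^2$ near $v=0$, which is unbounded. The cutoff $\gamma$ only tames large $|v|$ (which is what is needed inside $\tau_0$, cf.\ Lemma \ref{lem:lipschitz_property}); it does nothing at the singular point $v=0$ of the projection and of the correction term. Consequently the hypotheses of Theorems \ref{th:existence_particles_carmona} and \ref{th:existence_auxiliary_problem_nonlinear} are not met by the coefficients you wrote down, and items (i) and (ii) do not follow directly.

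The fix is exactly the idea you already use for $\tau_0$, applied once more: the paper introduces two additional Lipschitz, bounded truncations $\tau_1(v)$ and $\tau_2(v)$ which coincide with $\mathbf{P}_{v^\perp}$ and $v/|v|^2$ respectively for $|v|\geq 1/2$, defines the fully regularised systems \eqref{eq:particle_regular} and \eqref{eq:auxiliary_regularised} with these, applies the appendix theorems there, and only then uses the conservation law $|V^{i,N}_t|=|\bar V^{i,N}_t|=1$ (Lemma \ref{lem:norm_1_particles}) to conclude that the regularised solutions are also solutions of \eqref{eq:non-singular-particle_dyn} and \eqref{eq:non-singular-auxiliary}. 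With that amendment your argument goes through; the remainder of your proposal (the coupling estimate with the intermediate empirical measure, the law of large numbers, and the Gr\"onwall closure, which the paper simply delegates to \cite[Theorem 1.10]{carmona2016lectures}) is consistent with the paper and in fact spells out more of the mean-field step than the paper does. A minor additional remark: the constant in your It\^o correction ($-2(d-1)$) does not match the one computed in Appendix \ref{sec:Ito_conversion} (the paper itself displays two mutually inconsistent constants in \eqref{eq:particle_dyn_Ito} and \eqref{eq:particles_Ito}), but this has no bearing on the Lipschitz structure of the argument.
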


We prove this Proposition in Section \ref{sec:prop_non_singular}. The proof of Th. \ref{th:existence_solutions} and \ref{th:convergence_process} is direct assuming Prop. \ref{prop:results-non-singular} to hold true:

\begin{proof}[Proof Th. \ref{th:existence_solutions} and \ref{th:convergence_process} assuming Prop. \ref{prop:results-non-singular}]
The result is direct by Lem. \ref{lem:norm_1_particles}, since it implies that the non-singular dynamics and the approximated dynamics coincide.
\end{proof}

\subsection{Proof of Theorem \ref{th:NormalizedCase}}
\label{sec:proof_mean-field-theorems}

%

Let $f_t$ be solution to the Vicsek kinetic equation \eqref{eq:general kinetic orthogonal}. By the proof of Theorem \ref{theo:local cauchy} we know that for $T<T_1$ (with $T_1$ given in Eq. \eqref{T1}) it holds that 
\be \label{eq:estimate-1}
\forall (t,x,\omega) \in [0,T]\times\Omega\times\S^{d-1},\quad \abs{\boldsymbol J(t,x,\omega,f_t)} > c_*.
\ee
Therefore, for $t\leq T$ the approximated kinetic equation \eqref{eq:general kinetic approx} coincides with the Vicsek kinetic equation \eqref{eq:general kinetic orthogonal}. \\
Suppose now that for all $a>0$ it holds that
\be \label{eq:first_inequality}
\mathbb{P}\lp \sup_{t\in[0,T]}\sup_{(x,v)} \left||\mathbf J(t,x,v,\mu^N_t)|-|\mathbf J(t,x,v,f_t)|\right|\geq a\rp \leq \frac{1}{a}\eps(N),
\ee
where $\eps(N)\to 0$ as $N\to \infty$.
If \eqref{eq:first_inequality} holds, then we have that
\beqarl
&&\mathbb{P}\lp \inf_{t\in[0,T]} \inf_{(x,v)} |\mathbf J(t,x,v,\mu^N_t)|> \eps_0\rp = 1- \mathbb{P}\lp \inf_{t\in[0,T]}\inf_{(x,v)} |\mathbf J(t,x,v,\mu^N_t)|\leq\eps_0\rp  \nonumber\\
 &&\geq   1-\mathbb{P}\lp\inf_{t\in[0,T]}\inf_{(x,v)}\left| |\mathbf J(t,x,v,f_t)-\mathbf J(t,x,v,\mu^N_t)|- |\mathbf J(t,x,v,f_t)|\right|\leq\eps_0\rp, \label{eq:boundprob1}
\eeqarl
where we used the following inequality:
$$\inf_{t\in[0,T]}\inf_{(x,v)}|\mathbf J(t,x,v,\mu^N_t)|\geq \inf_{t\in[0,T]}\inf_{(x,v)}\left||\mathbf J(t,x,v,\mu^N_t)- \mathbf J(t,x,v, f_t)|- |\mathbf J(t,x,v,f_t) |\right|,$$
which follows from the triangular inequality  ($||a|-|b|| \leq |a-b|$ for all $a,b\in \R$).\\
The following bound holds
\beqar
&&\inf_{t\in[0,T]}\inf_{(x,v)}\left| |\mathbf J(t,x,v,f_t)-\mathbf J(t,x,v,\mu^N_t)|- |\mathbf J(t,x,v,f)|\right| \\
\qquad&&\geq \inf_{t\in[0,T]}\inf_{(x,v)} |\mathbf J(t,x,v,f_t)|- |\mathbf J(t,x,v,f_t)-\mathbf J(t,x,v,\mu^N_t)|\\
\qquad&&\geq \inf_{t\in[0,T]} \inf_{(x,v)} |\mathbf J(t,x,v,f_t)| - \sup_{(x,v)}|\mathbf J(t,x,v,f_t)-\mathbf J(t,x,v,\mu^N_t)|\\
\qquad &&\geq c_*- \sup_{t\in[0,T]}\sup_{(x,v)}|\mathbf J(t,x,v,f_t)-\mathbf J(t,x,v,\mu^N_t)|,
\eeqar
by \eqref{eq:estimate-1}.
Using this inequality in combination with \eqref{eq:boundprob1} we deduce that
$$
\mathbb{P}\lp \inf_{t\in[0,T]}\inf_{(x,v)} |\mathbf J(t,x,v,\mu^N_t)|> \eps_0\rp \geq 1 - \mathbb{P} \lp \sup_{t\in[0,T]}\sup_{(x,v)}\left|\mathbf{J}(t,x,v,\mu^N_t) - \mathbf J(t,x,v, f)\right| \geq c_*-\eps_0 \rp.
$$
Finally, applying \eqref{eq:first_inequality} in this inequality we obtain \eqref{eq:highprob}, from which we conclude the proof of the theorem. \\
 We are left with checking that \eqref{eq:first_inequality} holds. We have the following applying Markov's inequality:
\beqarl \label{eq:estimate0}
&&\mathbb{P} \lp \sup_{t\in[0,T]}\sup_{(x,v)}\left|\mathbf J(t,x,v,\mu^N_t)-\mathbf J(t,x,v,f_t)\right|\geq a\rp \nonumber\\
&& \leq  \frac{\mathbb{E}\left[\sup_{t\in[0,T]}\sup_{(x,v)}\left|\mathbf J(t,x,v,\mu^N_t)-\mathbf J(t,x,v,f_t)\right|\right]}{a}. \label{eq:estimate0}
\eeqarl
Now, let $C(\mathbf{K})$ be the Lipschitz constant of $\mathbf{K}$ (which does not depend on $(x,v)$). By the Kantorovich characterisation of the 1-Wasserstein distance given in \eqref{eq:Kantorovich_distance} we have that
$$\left| \mathbf J(t,x,v, \mu^N_t) - \mathbf J(t,x,v, f_t) \right| \leq C(\mathbf K) W_1(\mu^N_t,f).$$
We take first the supremum over $(x,v)$ on the previous expression, but observe that the right-hand-side is independent of $(x,v)$ and then we take expectations. The result is that
\be \label{eq:estimate1}
\mathbb{E} \left[\sup_{t\in[0,T]}\sup_{(x,v)} \left|\mathbf J(t,x,v,\mu^N_t)-\mathbf J(t,x,v,f_t)\right|\right] \leq C(\mathbf K) \mathbb{E}\left[\sup_{t\in[0,T]} W_1(\mu^N_t, f_t)\right].
\ee
By the triangle inequality, it holds that
$$W_1(\mu^N_t, f_t) \leq W_1(\mu^N_t, \bar \mu^N_t) + W_1(\bar \mu^N_t, f_t),$$
where $\bar \mu^N_t$ denotes the empirical measure associated to the nonlinear process \eqref{eq:auxiliary}. Now, applying Eq. \eqref{eq:distance_empirical} we have that
\beqar 
&&\mathbb{E}\sup_{t\in[0,T]}W_1(\mu^N_t, \bar \mu^N_t) \leq \mathbb{E}\sup_{t\in[0,T]}W_2(\mu^N_t, \bar \mu^N_t)\\
&&\leq \mathbb{E}\sup_{t\in[0,T]}\lp \frac{1}{N}\sum^N_{i=1}\lp|X^{i,N}_t - \bar X^{i,N}_t|^2 + |V^{i,N}_t - \bar V^{i,N}_t|^2 \rp\rp^{1/2}\\
&&\leq \sup_{1\leq i\leq N}\mathbb{E}\sup_{t\in[0,T]} \lp|X^{i,N}_t - \bar X^{i,N}_t|^2 + |V^{i,N}_t - \bar V^{i,N}_t|^2 \rp:=\tilde \eps(N),
\eeqar
where in the first inequality we used the identity \eqref{eq:distance_empirical}  and in the second inequality we used the Cauchy-Schwarz inequality to get rid of the square root and that $\sup (|b| +|c|)\leq \sup |b| + \sup |c|$ and $\sup |b|^2 = (\sup |b|)^2$ for $b,c\in \R$.
We also remind that $W_1\leq W_2$.
It also holds that
$$\mathbb{E}\sup_{t\in[0,T]}W_1(\bar \mu^N_t, f_t) \leq \mathbb{E}\sup_{t\in[0,T]}W_2(\bar \mu^N_t, f_t)  \leq  \mathbb{E}\left[ \sup_{t\in[0,T]} W_2(\bar \mu^N_t, f_t)^2  \right]^{1/2}.$$
To bound this last quantity we will consider the path solutions to the auxiliary particle system  \eqref{eq:auxiliary}, i.e, the path solutions  $(\bar X^{i}_t, \bar V^{i}_t)_{t\in[0,T]}$ as $\mathcal{C}:= C([0,T],~\Omega~\times~\mathbb{S}^{d-1})$-valued random variables. Denote by $f_{[0,T]}\in \mathcal{P}(\mathcal{C})$ their common law and its associated empirical measure
$$\bar\mu^N_{[0,T]}:= \frac{1}{N}\sum_{i=1}^N \delta_{(\bar X^{i}_t, \bar V^{i}_t)_{t\in[0,T]}} \in \mathcal{P}(\mathcal{C}).$$
Since the space $\mathcal{C}$ with the norm $\|w\|_{\infty}:= \sup_{t\in [0,T]} |w_t|$ is a Banach space, one can define the Wasserstein distance on $\mathcal{P}(\mathcal{C})$:
\beqar
&&W^{(T)}_2(m_1,m_2) = \inf \Big\{ \left[ \int \|w_1-w_2\|_\infty^2 \, m(dw_1, dw_2)\right]^{1/2} \\
&&\qquad \quad m\in \mathcal{P}_2(\mathcal{C}\times \mathcal{C})\, \mbox{ with marginals $m_1$ and $m_2$}\Big\}.
\eeqar
One can check that for $m, m'\in \mathcal{P}(\mathcal{C})$ it holds that $W_2(m_t, m'_t) \leq W_2^{(t)}(m,m') \leq W_2^{(T)}(m,m')$ for $t\in[0,T]$.
Therefore 
$$\mathbb{E}\left[ \sup_{t\in[0,T]} W_2(\bar \mu^N_t, f_t)^2  \right] \leq \mathbb{E}\left[ W_2^{(T)}(\bar \mu^N_{[0,T]}, f_{[0,T]})^2  \right] \to 0 \mbox{ as } N\to\infty,$$
where the limit is consequence of  Lem. \ref{lem:law_large_number} (notice that following the proof in \cite{carmona2016lectures} this lemma can be applied to random variables in $\mathcal{C}$). As a consequence we have that
$$\mathbb{E}\sup_{t\in[0,T]}W_1(\bar \mu^N_t, f_t)\to 0 \mbox{ as }N\to \infty.$$
 From these estimates, we have that
 \be \label{eq:estimate2}
 \mathbb{E}[\sup_{t\in[0,T]} W_1(\mu^N_t,f)]\to 0 \quad \mbox{ as }N\to \infty,
 \ee
 since $\tilde \eps(N)\to 0$ as $N\to \infty$ by Th. \ref{th:convergence_process}. \\
 Finally, combining \eqref{eq:estimate0}, \eqref{eq:estimate1} and \eqref{eq:estimate2}, we conclude \eqref{eq:first_inequality}.


\subsection{Proof of Proposition \ref{prop:results-non-singular}}
\label{sec:prop_non_singular}

The proof of Proposition \ref{prop:results-non-singular} follows closely the methodology in \cite{BolCanCar2} and \cite{carmona2016lectures} which are based on Sznitman approach \cite{sznitman1991topics}. The main difference is that the interaction rate $\nu$ and the noise coefficient $\sigma$ are considered to be constant in \cite{BolCanCar2}. In particular, this gives rise to an extra term in the equations (coming from \eqref{eq:extra_term}).

\noindent \textbf{Step 1. Regularised version of the non-singular dynamics}\\
The non-singular particle dynamics \eqref{eq:non-singular-particle_dyn} written in It\^o's convention corresponds to (see Section \ref{sec:Ito_conversion} for more details):
\begin{subequations} \label{eq:particles_Ito}
\begin{numcases}{}
dX^{i,N} = V^{i,N} dt,\\
dV^{i,N} = \nu(\mu^N)\, \mathbf{P}_{(V^{i,N})^\perp} (\tau_0(X^{i,N}, V^{i,N},\mu^N)) dt \nonumber\\
\qquad\qquad+ \sqrt{2\sigma(\mu^N)}\, \mathbf{P}_{(\oin)^\perp}dB^i_t\nonumber\\
\qquad \qquad+(d-1) \sigma(\mu^N) \frac{\oin }{|\oin|^2} dt.
\end{numcases}
\end{subequations}
And the It\^o formulation for the non-singular auxiliary process \eqref{eq:non-singular-auxiliary} is given by
\begin{subequations}\label{eq:auxiliary_Ito}
\begin{numcases}{} 
d\bxin = \boin dt,\\
d\boin = \nu(f)\, \mathbf{P}_{(\boin)^\perp}(\tau_0(\bxin,\boin,f))dt \nonumber\\
\qquad\qquad+ \mathbf{P}_{(\boin)^\perp}[(\nabla_v\sigma(\mu^N))(\bxin,\boin)]\, dt\nonumber\\
\qquad\qquad+ \sqrt{2\sigma(f)}\, \mathbf{P}_{(\boin)^\perp} dB^i_t\nonumber\\
\qquad \qquad+(d-1) \sigma(f)\frac{\boin}{|\boin|^2}dt,\nonumber\\
f_t = \mbox{law}(\bxin, \boin).
\end{numcases}
\end{subequations}

\begin{remark}
Notice that the solutions of \eqref{eq:particles_Ito} and \eqref{eq:auxiliary_Ito} fulfil $|V_t|^2= |V_0|^2$ in the velocities for all times where the solution is defined (this is shown as in Lemma \ref{lem:norm_1_particles}).
\end{remark}

\noindent\textbf{Step 2. Existence and uniqueness for the regularised particle system - Proof of part (i) of Theorem \ref{th:existence_solutions} for \eqref{eq:non-singular-particle_dyn}.}\\
We consider now a regularised version of Systems \eqref{eq:particles_Ito} and \eqref{eq:auxiliary_Ito} using two functions $\tau_1$ and $\tau_2$ both Lipschitz and bounded and satisfying:
\beqar
\tau_1(v) &=& \mathbf{P}_{v^\perp}= \mbox{Id} - \frac{v\otimes v}{|v|^2}, \qquad\mbox{if } |v| \geq 1/2,\\
\tau_2(v) &=& \frac{v}{|v|^2}, \qquad \mbox{if } |v|\geq 1/2. 
\eeqar
With these functions we defined the \textit{regularised particle dynamics} as
\begin{subequations}\label{eq:particle_regular}
\begin{numcases}{} 
dX^{i,N} = V^{i,N} dt,\\
dV^{i,N} = \nu(\mu^N)\, \tau_1(V^{i,N}) (\tau_0)(X^{i,N}, V^{i,N},\mu^N) dt\nonumber\\
\qquad\qquad+ \tau_1(V^{i,N})[(\nabla_v\sigma(\mu^N))(\bxin,\boin)]\, dt \nonumber \\
\qquad\qquad + \sqrt{2\sigma(\mu^N)}\, \tau_1(\oin)dB^i_t\nonumber\\
\qquad \qquad+(d-1) \sigma(\mu^N)\tau_2(\oin) dt.
\end{numcases}
\end{subequations}

\begin{remark} Notice that the functions $\tau_0$, $\tau_1$ and $\tau_2$ are introduced to regularise the original system, in the sense that we obtain a  new system where all the coefficients are Lipschitz and bounded in $\mathcal{H}$. This regularity allows us to apply classical results of existence of solutions and convergence, as we will see next. At the same time, when $|V|=1$ and $|\mathbf{J}|\geq \eps_0$ we recover the approximated particle equations. 
\end{remark}

Firstly,  we have by Lemma \ref{lem:lipschitz_property} that $\tau_0$ is a Lipschitz bounded function and, moreover, all the coefficients are also Lipschitz bounded (using that the product of Lipschitz bounded functions is Lipschitz bounded, see also Remark \ref{rem:lipschitz_condition}). So we have existence and uniqueness of pathwise solutions for \eqref{eq:particle_regular} (see \cite[Theorem 1.2]{carmona2016lectures}, which, for completeness we have added in a simplified form in Theorem \ref{th:existence_particles_carmona} in the Appendix.).

Secondly, one can check as in Lemma \ref{lem:norm_1_particles} that $|\oin_t| =|\oin_0 |=1$. Therefore, the solution to the regularised system \eqref{eq:particle_regular} is also a solution to the non-singular system \eqref{eq:non-singular-particle_dyn}.

\bigskip

\noindent\textbf{Step 3. Existence and uniqueness for an auxiliary regularised  process of \eqref{eq:auxiliary} - Proof of part (ii) of Theorem \ref{th:existence_solutions} for \eqref{eq:non-singular-auxiliary}.}\\
Similarly as before, we consider a regularised version of the non-singular auxiliary process \eqref{eq:non-singular-auxiliary} given by:
\begin{subequations}\label{eq:auxiliary_regularised}
\begin{numcases}{} 
d\bxin = \boin dt,\\
d\boin = \nu(f)\, \tau_1(\boin)(\tau_0(\bxin,\boin,f)dt \nonumber \\
\qquad\quad+ \tau_1(\bar V^{i,N})[(\nabla_v\sigma(\mu^N))(\bxin,\boin)]\, dt\\
\qquad\qquad+ \sqrt{2\sigma(f)}\,\tau_1(\boin) dB^i_t \nonumber\\
\qquad\qquad +(d-1) \sigma(f) \tau_2(\boin)dt \\
f_t = \mbox{law}(\bxin, \boin).
\end{numcases}
\end{subequations}

Since \eqref{eq:auxiliary_regularised} has bounded and Lipschitz coefficients in $\mathcal{H}$  (this can be seen as in the previous proof), it admits a pathwise unique global solution (this statement is shown in \cite[Theorem 1.7]{carmona2016lectures}, which is a generalisation of Sznitmann's strategy \cite[Theorem 1.1]{sznitman1991topics} - see Theorem \ref{th:existence_auxiliary_problem_nonlinear} in the Appendix). We can prove as in Lemma \ref{lem:norm_1_particles} that $d|\bar V_t|^2=0$ so $|\bar V_t|=1$ for all times. Therefore, the solution to \eqref{eq:auxiliary_regularised} is also solution of the non-singular auxiliary system \eqref{eq:non-singular-auxiliary}.

\bigskip

\noindent\textbf{Step 4. Existence and uniqueness for the non-singular kinetic equation \eqref{eq:non-singular-kinetic} - Proof of part $(iii)$ of Theorem \ref{th:existence_solutions} for the non-singular case: \eqref{eq:non-singular-auxiliary} and  \eqref{eq:non-singular-kinetic}}\\
Next we show that part $(iii)$ of Theorem \ref{th:existence_solutions} holds for the non-singular auxiliary process \eqref{eq:non-singular-auxiliary} and the non-singular kinetic equation \eqref{eq:non-singular-kinetic}.

We can apply Theorem \ref{theo:local cauchy} to the non-singular kinetic equation \eqref{eq:non-singular-kinetic}. Moreover, since $\nu$ is bounded and $\tau_0$ is also bounded
we have global-in-time existence of solutions for \eqref{eq:non-singular-kinetic}.

Now, we also know that the kinetic equation \eqref{eq:general kinetic orthogonal} has existence and uniqueness of  weak solutions in $L^\infty(~[0,~T_{max}),~L^1_{x,\omega}~\cap~L^p_{x,\omega})$ for $p$ in $[2,\infty]$. Let $f_t$ be the weak solution of the kinetic equation for $t\in [0, T_{max})$. We also now that for $\eps_0>0$ small enough there is a time $T_{\eps_0}\leq T_{max}$ such that $|\mathbf{J}_{f_{t}}|>\eps_0$ for all $t\in[0, T_{\eps_0})$. Therefore, for $t\in[0, T_{\eps_0})$ the solution $f_t$ of the kinetic equation is also solution of the non-singular kinetic equation \eqref{eq:non-singular-kinetic}. 

The fact that the Fokker-Planck equation for the non-singular auxiliary process $(\bar X_t, \bar V_t)$ \eqref{eq:non-singular-kinetic} corresponds to the non-singular kinetic equation \eqref{eq:non-singular-kinetic} is proven in \cite{BolCanCar2} (this is an application of It\^o's formula, see for example  \cite[Remark 1.2]{sznitman1991topics}, \cite[Remark 1.8]{carmona2016lectures} and Eq. \eqref{eq:ito_application}). The proof that $(\bar X_t, \bar V_t)$ has law $f_t$ (local-in-time) is carried out in the same way as in \cite{BolCanCar2}.

\bigskip
\noindent \textbf{Step 5. Mean-field limit  for the non-singular dynamics - Proof of Theorem \ref{th:convergence_process} for non-singular dynamics \eqref{eq:non-singular-particle_dyn} and \eqref{eq:non-singular-auxiliary}.}\\
The proof of Theorem \ref{th:convergence_process} is an application of Sznitman approach, generalised in \cite{carmona2016lectures}. 

We can apply the result in \cite[Theorem 1.10]{carmona2016lectures} (a condensed version can be found in Theorem \ref{th:particle_approximation_carmona} in the Appendix).

\begin{remark}
The control in the particle position is immediate from the one in the velocities:
$$|\xin_T -\bxin_T|^2 = \left|\int^T_0 \oin_s \, ds -\int^T_0 \boin_s \right|^2 \leq T \sup_{0\leq t\leq T} |\oin_t- \boin_t|^2.$$

\end{remark}

\appendix

\section{Some known results on SDE}
\label{sec:carmona}

For the sake of completeness, we introduce in this section known results for stochastic differential equations that we used in the main part of the document.

\subsection{Results on existence of solutions and large-particle limits}

The results and proofs from this section can be found in a more general form in \cite{carmona2016lectures}. The original approach to proving the large-particle limit is due to Sznitmann and can be found in \cite{sznitman1991topics}.

\textit{General stochastic differential equations.}
Consider a filtered probability space $(\Omega, \mathcal{F}, (\mathcal{F}_t)_{t\geq 0}, \mathbb{P})$ and an $(\mathcal{F}_t)_{t\geq 0}$-Brownian motion $B$ taking values in $\R^m$.
 Consider the stochastic differential equation giving the evolution for $Z_t\in \R^{p}$, for $t\geq 0$:
\begin{equation}
\label{eq:SDE_model}
dZ_t = b(Z_t) dt + a(Z_t) dB_t,
\end{equation}
with initial data $Z(t=0)=Z_{0}\in \R^p$,
where the coefficients $a:\R^p \to \R^{p\times m}$ and $b:\R^p\to \R^p$ are Lipschitz and bounded. (This is the form taken by the SDE \eqref{eq:particle_regular} with $p=2d$ and $Z_t=(X_t,V_t)$.)

\begin{definition}[See Definition 1.1 in \cite{carmona2016lectures}] 
An $(\mathcal{F}_t)_{t\geq 0}$- adapted continuous process $(Z_t)_{0\leq t\leq T}$ is a solution to \eqref{eq:SDE_model} if
$$Z_t = Z_0 +\int^T_0 b(Z_s)\, ds +\int^T_0 a(Z_s)\, dB_s, \quad 0\leq t\leq T.$$

\end{definition}

\begin{theorem}[Adapted from Theorem 1.2   \cite{carmona2016lectures} ] 
\label{th:existence_particles_carmona}
Let us assume that $Z_0 \in L^2$ is independent of $B$ and that the coefficients $b$ and $a$ are Lipschitz and bounded. Then, there exists a  unique solution of \eqref{eq:SDE_model}. Moreover, $Z_t\in L^2$ for all $t<T$, with $T$ finite.
\end{theorem}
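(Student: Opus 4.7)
The plan is to establish existence and uniqueness via a standard Picard iteration scheme on $\R^p$-valued continuous adapted processes, combined with Gronwall's lemma applied to second moment bounds. Since $b$ and $a$ are Lipschitz and bounded with constants $L,M<\infty$, all the integrals involved will be well-defined in the It\^o sense, and I will work in the Banach space $\mathcal{S}^2_T$ of $(\mathcal{F}_t)$-adapted continuous processes equipped with the norm $\|Z\|_{\mathcal{S}^2_T}^2 = \mathbb{E}[\sup_{0\leq t\leq T}|Z_t|^2]$.

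First I would set $Z^0_t := Z_0$ and define inductively
\begin{equation*}
Z^{n+1}_t := Z_0 + \int_0^t b(Z^n_s)\,ds + \int_0^t a(Z^n_s)\,dB_s, \qquad 0\leq t\leq T.
\end{equation*}
The next step is to check that each $Z^n$ lies in $\mathcal{S}^2_T$: using $(x+y+z)^2 \leq 3(x^2+y^2+z^2)$, Cauchy--Schwarz for the drift term, and Doob's maximal inequality together with the It\^o isometry for the diffusion term, I obtain a bound of the form $\|Z^{n+1}\|_{\mathcal{S}^2_T}^2 \leq C(\mathbb{E}|Z_0|^2 + M^2 T + M^2 T)$, which propagates through the iteration since $a$ and $b$ are bounded. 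In particular this yields $Z_t \in L^2$ for all $t \leq T$.

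The heart of the proof is the contraction estimate. For any $t \leq T$,
\begin{equation*}
\mathbb{E}\Bigl[\sup_{0\leq s\leq t}|Z^{n+1}_s-Z^n_s|^2\Bigr] \leq 2T\,L^2\int_0^t \mathbb{E}\Bigl[\sup_{0\leq u\leq s}|Z^n_u-Z^{n-1}_u|^2\Bigr]ds + 8L^2\int_0^t \mathbb{E}\Bigl[\sup_{0\leq u\leq s}|Z^n_u-Z^{n-1}_u|^2\Bigr]ds,
\end{equation*}
where I use Cauchy--Schwarz on the drift difference and Doob + It\^o isometry on the martingale difference, both combined with the Lipschitz bound on $b$ and $a$. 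Iterating this inequality gives
\begin{equation*}
\mathbb{E}\Bigl[\sup_{0\leq s\leq T}|Z^{n+1}_s-Z^n_s|^2\Bigr] \leq \frac{(C_T)^n}{n!}\,\mathbb{E}\Bigl[\sup_{0\leq s\leq T}|Z^1_s-Z^0_s|^2\Bigr],
\end{equation*}
with $C_T = (2T+8)L^2 T$, so the series $\sum_n \|Z^{n+1}-Z^n\|_{\mathcal{S}^2_T}$ converges. By completeness of $\mathcal{S}^2_T$ there is a limit $Z \in \mathcal{S}^2_T$ toward which $Z^n$ converges, and passing to the limit in the defining equation (the stochastic integral passes to the limit by It\^o isometry) yields a solution.

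For uniqueness, given two solutions $Z, \tilde Z$ with the same initial data, the same estimates provide
\begin{equation*}
\mathbb{E}\Bigl[\sup_{0\leq s\leq t}|Z_s-\tilde Z_s|^2\Bigr] \leq C\int_0^t \mathbb{E}\Bigl[\sup_{0\leq u\leq s}|Z_u-\tilde Z_u|^2\Bigr]ds,
\end{equation*}
and Gronwall's lemma forces the left-hand side to vanish. The main technical obstacle is handling the supremum inside the expectation for the stochastic integral term, which is exactly where Doob's maximal inequality is essential; without it one only controls $\sup_t \mathbb{E}|Z^n_t-Z^{n-1}_t|^2$ rather than $\mathbb{E}\sup_t |Z^n_t-Z^{n-1}_t|^2$, and the pathwise notion of solution requires the stronger norm.
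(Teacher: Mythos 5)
Your proof is correct and is the classical Picard iteration plus Doob/It\^o isometry/Gronwall argument for SDEs with Lipschitz (here even bounded) coefficients. The paper does not actually prove this statement -- it is quoted verbatim from Theorem 1.2 of the cited lecture notes of Carmona, whose proof is essentially the one you give -- so there is nothing to compare beyond noting that your argument is the standard one and is sound.
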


\noindent\textit{Results for non-linear SDE.}\\
We consider a nonlinear SDE of the form
\begin{equation}
\label{eq:auxiliary_general}
dZ_t = b(Z_t, \mathcal{L}(Z_t)) dt + a(Z_t, \mathcal{L}(Z_t) ) dB_t.
\end{equation}
where $\mathcal{L}(Z)$ denotes the distribution or  law of the random element $Z$. Here we also assume that $b: \R^p \times \mathcal{P}_2(\R^p) \to \R^p$ and $a: \R^p \times \mathcal{P}_2(\R^p) \to \R^{p\times m}$ are bounded and Lipschitz. Specifically, in this case to be Lipschitz means that for all $z,z'\in \R^d$ and all $\mu, \mu'\in \mathcal{P}(\R^d)$ it holds that
$$|(b,a)(z, \mu) - (b,a)(z',\mu')|\leq c(|z-z'| + W^{(2)}(\mu, \mu')),$$
for some constant $c$ (see Section \ref{sec:preliminaries_SDE} for more details).
System \eqref{eq:auxiliary_general} is the form taken by the system auxiliary system \eqref{eq:auxiliary_regularised}.

\begin{theorem}[From Theorem 1.7 in \cite{carmona2016lectures}]
\label{th:existence_auxiliary_problem_nonlinear}
Let us assume that $Z_0\in L^2$ is independent of $B$, and that the coefficients $b$ and $a$ are bounded and Lipschitz. Then, there exists a unique solution to \eqref{eq:auxiliary_general}.
\end{theorem}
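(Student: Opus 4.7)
The plan is to adapt Sznitman's Picard iteration to this McKean-Vlasov setting. I would set $Z^{(0)}_t := Z_0$ and define iteratively
\begin{equation*}
Z^{(n+1)}_t = Z_0 + \int_0^t b\bigl(Z^{(n)}_s, \mathcal{L}(Z^{(n)}_s)\bigr)\, ds + \int_0^t a\bigl(Z^{(n)}_s, \mathcal{L}(Z^{(n)}_s)\bigr)\, dB_s.
\end{equation*}
Because $b$ and $a$ are bounded, every $\mathcal{L}(Z^{(n)}_s)$ lies in $\mathcal{P}_2(\R^p)$, so each step is a standard SDE with bounded Lipschitz coefficients in the space variable (the measure arguments are now frozen, time-parametrized data). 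Theorem \ref{th:existence_particles_carmona} then produces a continuous $L^2$-adapted solution at each iteration.

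The core step is a Gronwall-type estimate on
\begin{equation*}
u_n(t) := \mathbb{E}\Bigl[\sup_{0\leq s\leq t}\bigl|Z^{(n+1)}_s - Z^{(n)}_s\bigr|^2\Bigr].
\end{equation*}
Combining Cauchy-Schwarz on the drift part and the Burkholder-Davis-Gundy inequality on the martingale part, together with the joint Lipschitz continuity of $(b,a)$, gives
\begin{equation*}
u_n(t) \leq C\int_0^t \Bigl(\mathbb{E}\bigl|Z^{(n)}_s - Z^{(n-1)}_s\bigr|^2 + W_2^2\bigl(\mathcal{L}(Z^{(n)}_s),\mathcal{L}(Z^{(n-1)}_s)\bigr)\Bigr)ds.
\end{equation*}
The decisive observation is that the joint law of $(Z^{(n)}_s, Z^{(n-1)}_s)$ is an admissible coupling in the definition of $W_2$, so $W_2^2(\mathcal{L}(Z^{(n)}_s),\mathcal{L}(Z^{(n-1)}_s)) \leq \mathbb{E}|Z^{(n)}_s - Z^{(n-1)}_s|^2 \leq u_{n-1}(s)$. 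Hence $u_n(t) \leq 2C\int_0^t u_{n-1}(s)\, ds$, and iterating yields $u_n(T) \leq (2CT)^n u_0(T)/n!$.

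Summability of this bound shows $(Z^{(n)})$ is Cauchy in the Banach space $L^2(\Omega;C([0,T];\R^p))$ equipped with $\|Z\|^2 = \mathbb{E}[\sup_{[0,T]}|Z_t|^2]$. The limit $Z$ is continuous, adapted and $L^2$; passing to the limit in the iteration identity — using Lipschitz continuity of $(b,a)$ and $W_2(\mathcal{L}(Z^{(n)}_s),\mathcal{L}(Z_s)) \to 0$ — shows it satisfies \eqref{eq:auxiliary_general}. Uniqueness then follows by running the same BDG/Lipschitz/Wasserstein-coupling estimate directly on the difference of two solutions and applying Gronwall's lemma.

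The main obstacle will be the interplay between the pathwise stochastic analysis (BDG to put the supremum inside the expectation) and the nonlinear dependence on the law, which enters only through a Wasserstein distance rather than through pointwise comparisons. The inequality $W_2^2(\mathcal{L}(X),\mathcal{L}(Y)) \leq \mathbb{E}|X-Y|^2$ is exactly the bridge that converts the McKean-Vlasov problem into an ordinary Gronwall argument on $u_n(t)$; once this coupling is in place, the rest is classical.
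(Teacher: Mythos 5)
Your argument is correct and is essentially the proof the paper relies on: the paper does not prove this statement itself but imports it from \cite{carmona2016lectures}, whose Theorem 1.7 is established by exactly this Sznitman-style Picard iteration, with the coupling inequality $W_2^2(\mathcal{L}(X),\mathcal{L}(Y))\leq \mathbb{E}|X-Y|^2$ reducing the McKean--Vlasov nonlinearity to a Gronwall estimate with factorial decay. (Only a cosmetic remark: your displayed iteration freezes both the state and the law at step $n$, in which case $Z^{(n+1)}$ is defined directly by the integrals and no appeal to Theorem \ref{th:existence_particles_carmona} is needed; either variant goes through.)
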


\bigskip

\noindent For $\varphi\in C^2_b(\R^p)$, It\^o's formula applied to the process $Z_t$ gives  (see Remark 1.8 in \cite{carmona2016lectures}):
\beqarl
\varphi(Z_t) &=& \varphi(Z_0)+\int^t_0\Big[ \frac{1}{2}\mbox{trace}[a(Z_s, \mathcal{L}(Z_s))^T a(Z_s, \mathcal{L}(Z_s))] D^2\varphi(Z_s)\nonumber\\
&& \qquad+b(Z_s, \mathcal{L}(Z_s))D\varphi(Z_s)\Big] ds + \int^t_0 D\varphi(Z_s) \, a(Z_s, \mathcal{L}(Z_s)) \, dB_s, \label{eq:ito_application}
\eeqarl
where the exponent $T$ denotes the transpose and $D^2$ denotes the Hessian matrix.

\begin{lemma}[Law of large numbers: Lemma 1.9 in \cite{carmona2016lectures}]
\label{lem:law_large_number}
Let $\mu\in \mathcal{P}_2(\R^p)$, and let $(Z_i)_{i\in \mathbb{N}}$ be a sequence of independent identically distributed random variables with common law $\mu$. For each $N\geq 1$ denote by $\mu^N$ the empirical distribution associated to the first $N$ elements of the sequence, i.e.
$$\mu^N(z)= \frac{1}{N}\sum_{i=1}^N \delta_{Z_i}(z).$$
 Then, it holds that
 \be \label{eq:limitW2empirical}
\lim_{N\to\infty}\mathbb{E}[W_2(\mu^N, \mu)^2] =0.
\ee
\end{lemma}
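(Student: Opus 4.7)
The plan is to combine a pathwise (almost sure) convergence in $W_2$ with a uniform integrability argument in order to pass from almost sure convergence to convergence in $L^1$.

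\medskip

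\textbf{Step 1: Almost sure weak convergence.} First I would establish that $\mu^N \to \mu$ weakly, $\mathbb{P}$-almost surely. For any fixed $\varphi \in C_b(\R^p)$, the strong law of large numbers applied to the i.i.d.\ sequence $(\varphi(Z_i))_{i \geq 1}$ gives
$$\int_{\R^p} \varphi \, d\mu^N = \frac{1}{N}\sum_{i=1}^N \varphi(Z_i) \xrightarrow[N \to \infty]{\text{a.s.}} \mathbb{E}[\varphi(Z_1)] = \int_{\R^p} \varphi \, d\mu.$$
By separability of $C_b(\R^p)$ with respect to a suitable metrisation of weak convergence (e.g.\ restricting to a countable convergence-determining class such as Lipschitz bounded functions with rational coefficients on a countable dense subset), this a.s.\ convergence can be made simultaneous over that class, yielding $\mu^N \to \mu$ weakly a.s.

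\medskip

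\textbf{Step 2: Almost sure convergence of second moments.} Since $\mu \in \mathcal{P}_2(\R^p)$, we have $\mathbb{E}[|Z_1|^2] < \infty$, so the strong law of large numbers applied to $(|Z_i|^2)_{i \geq 1}$ gives
$$\int_{\R^p} |z|^2 \, d\mu^N(z) = \frac{1}{N}\sum_{i=1}^N |Z_i|^2 \xrightarrow[N \to \infty]{\text{a.s.\ and in }L^1} \mathbb{E}[|Z_1|^2] = \int_{\R^p} |z|^2 \, d\mu(z).$$

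\medskip

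\textbf{Step 3: Almost sure convergence in $W_2$.} I would now invoke the classical characterisation (see \cite[Thm.\ 6.9]{villani2008optimal}, cited earlier in the paper) that for probability measures with finite second moment, weak convergence together with convergence of second moments is equivalent to convergence in the 2-Wasserstein distance. Combining Steps 1 and 2, we deduce
$$W_2(\mu^N, \mu) \xrightarrow[N \to \infty]{\text{a.s.}} 0.$$

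\medskip

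\textbf{Step 4: Uniform integrability.} To upgrade this a.s.\ convergence to $L^1$ convergence of $W_2(\mu^N, \mu)^2$, I would use the product coupling of $\mu^N$ and $\mu$ as a non-optimal competitor, which yields
$$W_2(\mu^N, \mu)^2 \leq 2 \int_{\R^p} |z|^2 \, d\mu^N(z) + 2 \int_{\R^p} |z|^2 \, d\mu(z) = \frac{2}{N}\sum_{i=1}^N |Z_i|^2 + 2\mathbb{E}[|Z_1|^2].$$
The dominating sequence on the right converges in $L^1$ (by Step 2) and is therefore uniformly integrable; hence so is $W_2(\mu^N, \mu)^2$. Vitali's convergence theorem then upgrades the a.s.\ convergence of Step 3 to convergence in $L^1$, i.e.\ $\mathbb{E}[W_2(\mu^N, \mu)^2] \to 0$.

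\medskip

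\textbf{Main obstacle.} The conceptual steps are standard, and none present a real difficulty. The only point requiring a little care is Step 3, where one must cite (or re-derive) the fact that weak convergence plus convergence of second moments is equivalent to convergence in $W_2$ on $\mathcal{P}_2(\R^p)$; this is precisely the content of Villani's theorem already referenced in Section \ref{sec:functional_framework}. The uniform integrability argument in Step 4 is robust and relies only on the finite second moment hypothesis built into $\mathcal{P}_2(\R^p)$.
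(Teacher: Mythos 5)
Your argument is correct. Note that the paper does not actually supply a proof of this lemma: it is stated in the appendix purely as a quoted result, deferring to Lemma 1.9 of \cite{carmona2016lectures}, so there is no in-paper proof to compare against. Your route is the standard one that underlies that reference: almost sure weak convergence of the empirical measures via the strong law applied along a countable convergence-determining class (Varadarajan's theorem), almost sure and $L^1$ convergence of the second moments, the characterisation in \cite{villani2008optimal} that weak convergence plus convergence of second moments is equivalent to $W_2$-convergence on $\mathcal{P}_2(\R^p)$, and finally an upgrade from almost sure to $L^1$ convergence by dominating $W_2(\mu^N,\mu)^2$ with the product coupling and invoking uniform integrability (equivalently, Pratt's generalised dominated convergence theorem, since the dominating sequence converges both almost surely and in $L^1$). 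The only cosmetic slip is the phrase ``separability of $C_b(\R^p)$'' --- that space is not separable for the sup norm --- but your parenthetical remark about restricting to a countable convergence-determining class is exactly the correct fix, so no genuine gap remains.
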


\bigskip

Let $z_i \in \R^p$ and $z_i' \in \R^p$ for $i=1,\hdots,N$, it holds (\cite[(1.24)]{carmona2016lectures}) 
\be \label{eq:distance_empirical}
W^{(2)}\lp \frac{1}{N}\sum^N_{i=1}\delta_{z_i},\, \frac{1}{N}\sum^N_{i=1}\delta_{z_i'} \rp \leq \lp\frac{1}{N}\sum^N_{i=1}|z_i-z_i'|^2 \rp^{1/2}.
\ee

\bigskip
\noindent \textit{Particle approximations (from \cite[Section 1.3.4]{carmona2016lectures}).}

\begin{theorem}[Extracted and adapted from Section 1.3.4 in \cite{carmona2016lectures}]
\label{th:particle_approximation_carmona}
Consider
\be  \label{eq:integral_auxiliary}
Z^i_t = Z^i_0 + \int^T_0 b(Z^i_s, \mathcal{L}(Z^i_s)), ds +\int^T_0 a(Z^i_s, \mathcal
L (Z^i_s))\, dB^i_s,
\ee
and
\be \label{eq:integral_particle}
Z^{i,N}_t = Z^i_0 +\int^T_0 b(Z^{i,N}_s, \mu^N_s)\, ds + \int^T_0 a(Z^{i,N}_s, \mu^N_s)\, dB^i_s,
\ee
where $\mu^N_s$ is the empirical distribution of the $N$ particles. Assume that $a, b$ are bounded and Lipschitz in $\R^p\times \mathcal{P}_2(\R^p)$. Then, it holds that
\be \label{eq:limit_particles_carmona}
\lim_{N\to\infty}\sup_{1\leq i\leq N} \mathbb{E}\left[ \sup_{0\leq t\leq T} |Z^{i,N}_t - Z^{i}_t|^2\right] =0.
\ee
\end{theorem}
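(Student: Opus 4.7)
The plan is to apply Sznitman's synchronous coupling argument: since the driving Brownian motions $B^i$ and initial conditions $Z^i_0$ are shared between \eqref{eq:integral_auxiliary} and \eqref{eq:integral_particle}, subtracting the two integral equations gives an expression for $Z^{i,N}_t - Z^i_t$ with no initial term. First, I would take the supremum in time over $[0,t]$, square, and take expectations. The drift integral is controlled by Cauchy--Schwarz (costing a factor $T$), and the stochastic integral by the Burkholder--Davis--Gundy inequality (or Doob's $L^2$ inequality). Setting $\eps_N(t) := \sup_{1\leq i\leq N}\mathbb{E}\big[\sup_{0\leq s\leq t}|Z^{i,N}_s - Z^i_s|^2\big]$ and using the global Lipschitz property of $a,b$ in $(z,\mu)$ with respect to $|\cdot|+W_2$, one obtains
\[
\eps_N(t) \leq C\int_0^t \mathbb{E}\big[|Z^{i,N}_s-Z^i_s|^2 + W_2(\mu^N_s,\mathcal{L}(Z^i_s))^2\big]\,ds
\]
for some constant $C$ depending on $T$ and the Lipschitz constants.

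Next, I would introduce the empirical measure of the auxiliary i.i.d.\ system, $\bar\mu^N_s := \frac{1}{N}\sum_{j=1}^N \delta_{Z^j_s}$, and split the Wasserstein term by the triangle inequality: $W_2(\mu^N_s,\mathcal{L}(Z^i_s)) \leq W_2(\mu^N_s,\bar\mu^N_s) + W_2(\bar\mu^N_s,\mathcal{L}(Z^i_s))$. The first term is directly dominated by the coupling error thanks to \eqref{eq:distance_empirical}, giving $W_2(\mu^N_s,\bar\mu^N_s)^2 \leq \frac{1}{N}\sum_{j=1}^N |Z^{j,N}_s - Z^j_s|^2$. Taking expectations and invoking the exchangeability of both systems (which comes from the i.i.d.\ initial data and the symmetric coupling through $(B^j)$), this is bounded by $\eps_N(s)$.

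The second piece $\mathbb{E}[W_2(\bar\mu^N_s,\mathcal{L}(Z^i_s))^2]$ is the purely probabilistic error between the empirical measure of $N$ i.i.d.\ samples and their common law. By Lemma \ref{lem:law_large_number} this quantity converges to $0$ pointwise in $s$, and since it is uniformly bounded in $(N,s)$ by $4\sup_s \mathbb{E}|Z^1_s|^2$ (which is finite by Theorem \ref{th:existence_auxiliary_problem_nonlinear}), dominated convergence yields
\[
\eta_N := C\int_0^T \mathbb{E}\big[W_2(\bar\mu^N_s,\mathcal{L}(Z^i_s))^2\big]\,ds \xrightarrow[N\to\infty]{} 0.
\]
Combining everything gives $\eps_N(t) \leq 2C\int_0^t \eps_N(s)\,ds + \eta_N$, and Gr\"onwall's inequality concludes $\eps_N(T)\leq \eta_N e^{2CT} \to 0$, which is exactly \eqref{eq:limit_particles_carmona}.

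The main obstacle, and the only genuinely subtle step, is verifying that pointwise convergence in Lemma \ref{lem:law_large_number} upgrades to convergence of the time integral $\eta_N$: this requires uniform (in $s\in[0,T]$ and $N$) control of the second moments of $\bar\mu^N_s$ and $\mathcal{L}(Z^i_s)$, which follows from the boundedness of $a,b$ and standard SDE moment estimates on \eqref{eq:integral_auxiliary}. All remaining steps are routine consequences of BDG, Lipschitz continuity, exchangeability, and Gr\"onwall, once the coupling is set up symmetrically.
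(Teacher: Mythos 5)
Your proposal is correct and is precisely the standard Sznitman synchronous-coupling argument that the paper relies on: the paper itself gives no proof of Theorem \ref{th:particle_approximation_carmona}, deferring entirely to \cite[Section 1.3.4]{carmona2016lectures}, and your reconstruction (coupling, triangle inequality through $\bar\mu^N_s$, the bound \eqref{eq:distance_empirical}, Lemma \ref{lem:law_large_number} with dominated convergence, and Gr\"onwall) is exactly that cited proof. No gaps.
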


\subsection{Stratonovich to Ito's convention}
\label{sec:Ito_conversion}

We use the results presented in \cite{evans2012introduction} as well as \cite[V.30, Theorem (30.14)]{rogers2000diffusions}. Particularly, if $Z_t$ is solution to the Stratonovich SDE
\begin{subequations} \label{eq:SDE_stratonovich}
\begin{numcases}{}
dZ = b(Z,t)dt + a(Z,t) \circ dB_t,\\
Z(t=0)= Z_0,
\end{numcases}
\end{subequations}
where $B$ is an $m$- dimensional Brownian motion, and $b: \R^p\times [0,T] \to \R^p$ and $a:\R^p\times[0,T]\to M^{p\times m}$ (the space of real matrices of dimension $p\times m$) and are such that there is existence and uniqueness of solutions for the SDE \eqref{eq:SDE_stratonovich}.
Then, $Z_t$ is solution to the It\^o SDE:
\begin{subequations}
\begin{numcases}{}
dZ = [b(Z,t)+\frac{1}{2}c(Z,t)]\, dt + a(Z, t) dB_t,\\
Z(t=0)= Z_0,
\end{numcases}
\end{subequations}
where
$$c_i(z,t) = \sum_{k=1}^m\sum_{j=1}^n \frac{\partial a_{ik}}{\partial z_j}(z,t) a_{jk}(z,t).$$
In our case $a: \R^{2d}\to \R^{2d\times d}$
with
$$a(x,v) =
\lp
\begin{array}{c}
0_{d\times d} \\
\eta(x,v)
\end{array}
\rp,
$$
where $0_{d\times d}$ is a $d\times d$ zero-matrix and
$$\eta(x,v) = \alpha(x,v) P_{v^\perp}; \qquad \alpha(x,v) = \sqrt{2\sigma(f)(x,v)}.$$
With this we have that $c:\R^{2d}\to \R^{2d}$ with
$c_i = 0$ for $i=1,\hdots, d$ since $\gamma_{jk}=0$ for all $j=1,\hdots, d$ and any $k=1,\hdots, d$. Now, for $i=d+1,\hdots, 2d$ we have that
$$c_i(x,v) = \sum_{k=1}^d \sum_{j=1}^d \frac{\partial \eta_{ik}}{\partial v_j}(x,v) \eta_{jk}(x,v).$$
Using that 
$$\eta_{jk}(x,v) = \alpha(x,v) \lp \delta_{j=k} - \frac{v_j v_k}{|v|^2}\rp,$$
we compute the previous expression and obtain that
$$c(x,v) = \lp 0_{1\times d},\,  2 (d-1) \sigma(f) (x,v) \frac{v}{|v|^2} + P_{v^\perp}[(\nabla_v\sigma(f))(x,v)] \rp.$$

\bibliographystyle{acm}
\bibliography{bibliography_collective_behaviour}

\begin{thebibliography}{10}

\bibitem{BenTor}
{\sc Bendali, A., and Tordeux, S.}
\newblock {Extension of the G{\"u}nter derivatives to Lipschitz domains and
  application to the boundary potentials of elastic waves}.
\newblock working paper or preprint, Nov. 2016.

\bibitem{BolCanCar1}
{\sc Bolley, F., Ca{\~n}izo, J.~A., and Carrillo, J.~A.}
\newblock Stochastic mean-field limit: Non-lipschitz forces and swarming.
\newblock {\em Mathematical Models and Methods in Applied Sciences 21}, 11
  (2011), 2179--2210.

\bibitem{BolCanCar2}
{\sc Bolley, F., Ca{\~n}izo, J.~A., and Carrillo, J.~A.}
\newblock Mean-field limit for the stochastic vicsek model.
\newblock {\em Applied Mathematics Letters 25}, 3 (2012), 339 -- 343.

\bibitem{BriMeuNav}
{\sc Briant, M., Meunier, N., and Navoret, L.}
\newblock work in progress.

\bibitem{carmona2016lectures}
{\sc Carmona, R.}
\newblock {\em Lectures on BSDEs, stochastic control, and stochastic
  differential games with financial applications}, vol.~1.
\newblock SIAM, 2016.

\bibitem{caussin2014emergent}
{\sc Caussin, J.-B., Solon, A., Peshkov, A., Chat{\'e}, H., Dauxois, T.,
  Tailleur, J., Vitelli, V., and Bartolo, D.}
\newblock Emergent spatial structures in flocking models: a dynamical system
  insight.
\newblock {\em Physical review letters 112}, 14 (2014), 148102.

\bibitem{chaintron2021propagation}
{\sc Chaintron, L.-P., and Diez, A.}
\newblock Propagation of chaos: a review of models, methods and applications.
\newblock {\em arXiv preprint arXiv:2106.14812\/} (2021).

\bibitem{chate2008modeling}
{\sc Chat{\'e}, H., Ginelli, F., Gr{\'e}goire, G., Peruani, F., and Raynaud,
  F.}
\newblock Modeling collective motion: variations on the vicsek model.
\newblock {\em The European Physical Journal B 64}, 3 (2008), 451--456.

\bibitem{chate2008collective}
{\sc Chat{\'e}, H., Ginelli, F., Gr{\'e}goire, G., and Raynaud, F.}
\newblock Collective motion of self-propelled particles interacting without
  cohesion.
\newblock {\em Physical Review E 77}, 4 (2008), 046113.

\bibitem{david2015mass}
{\sc David, I., Kohnke, P., Lagriffoul, G., Praud, O., Plouarbou{\'e}, F.,
  Degond, P., and Druart, X.}
\newblock Mass sperm motility is associated with fertility in sheep.
\newblock {\em Animal reproduction science 161\/} (2015), 75--81.

\bibitem{Deg}
{\sc Degond, P.}
\newblock Global existence of smooth solutions for the vlasov-fokker-planck
  equation in $1$ and $2$ space dimensions.
\newblock {\em Annales scientifiques de l'\'ecole Normale Sup\'erieure 19}, 4
  (1986), 519--542.

\bibitem{degond2019phase}
{\sc Degond, P., Diez, A., Frouvelle, A., and Aceituno, S.~M.}
\newblock Phase transitions and macroscopic limits in a bgk model of
  body-attitude coordination.
\newblock {\em arXiv preprint arXiv:1905.04885\/} (2019).

\bibitem{degond2014hydrodynamics}
{\sc Degond, P., Dimarco, G., and Mac, T. B.~N.}
\newblock Hydrodynamics of the kuramoto--vicsek model of rotating
  self-propelled particles.
\newblock {\em Mathematical Models and Methods in Applied Sciences 24}, 02
  (2014), 277--325.

\bibitem{DegFrouLiu1}
{\sc Degond, P., Frouvelle, A., and Liu, J.-G.}
\newblock {A note on phase transitions for the Smoluchowski equation with
  dipolar potential}.
\newblock In {\em {Hyperbolic Problems: Theory, Numerics, Applications}\/}
  (Padova, Italy, June 2012), vol.~8 of {\em Applied Mathematics}, {AIMS},
  pp.~179--192.

\bibitem{DegFroLiu3}
{\sc Degond, P., Frouvelle, A., and Liu, J.-G.}
\newblock Macroscopic limits and phase transition in a system of self-propelled
  particles.
\newblock {\em Journal of nonlinear science 23}, 3 (2013), 427--456.

\bibitem{DegFrouLiu2}
{\sc Degond, P., Frouvelle, A., and Liu, J.-G.}
\newblock {Phase transitions, hysteresis, and hyperbolicity for self-organized
  alignment dynamics}.
\newblock {\em {Archive for Rational Mechanics and Analysis} 216}, 1 (2015),
  63--115.

\bibitem{degond2017alignment}
{\sc Degond, P., Frouvelle, A., Merino-Aceituno, S., and Trescases, A.}
\newblock Alignment of self-propelled rigid bodies: from particle systems to
  macroscopic equations.
\newblock In {\em International workshop on Stochastic Dynamics out of
  Equilibrium\/} (2017), Springer, pp.~28--66.

\bibitem{DegFrouMerTre}
{\sc Degond, P., Frouvelle, A., Merino-Aceituno, S., and Trescases, A.}
\newblock Quaternions in collective dynamics.
\newblock {\em Multiscale Modeling \& Simulation 16}, 1 (2018), 28--77.

\bibitem{degond2017hui}
{\sc Degond, P., Manhart, A., and Yu, H.}
\newblock A continuum model for nematic alignment of self-propelled particles.
\newblock {\em Discrete \& Continuous Dynamical Systems-B 22}, 4 (2017),
  1295--1327.

\bibitem{degond2018age}
{\sc Degond, P., Manhart, A., and Yu, H.}
\newblock An age-structured continuum model for myxobacteria.
\newblock {\em Mathematical Models and Methods in Applied Sciences 28}, 09
  (2018), 1737--1770.

\bibitem{degond2019nematic}
{\sc Degond, P., and Merino-Aceituno, S.}
\newblock Nematic alignment of self-propelled particles in the macroscopic
  regime.
\newblock {\em arXiv preprint arXiv:1910.02865\/} (2019).

\bibitem{degond2019coupled}
{\sc Degond, P., Merino-Aceituno, S., Vergnet, F., and Yu, H.}
\newblock Coupled self-organized hydrodynamics and stokes models for
  suspensions of active particles.
\newblock {\em Journal of Mathematical Fluid Mechanics 21}, 1 (2019), 6.

\bibitem{degond2008continuum}
{\sc Degond, P., and Motsch, S.}
\newblock Continuum limit of self-driven particles with orientation
  interaction.
\newblock {\em Mathematical Models and Methods in Applied Sciences 18}, supp01
  (2008), 1193--1215.

\bibitem{degond2011macroscopic}
{\sc Degond, P., and Motsch, S.}
\newblock A macroscopic model for a system of swarming agents using curvature
  control.
\newblock {\em Journal of Statistical Physics 143}, 4 (2011), 685--714.

\bibitem{degond2015multi}
{\sc Degond, P., and Navoret, L.}
\newblock A multi-layer model for self-propelled disks interacting through
  alignment and volume exclusion.
\newblock {\em Mathematical Models and Methods in Applied Sciences 25}, 13
  (2015), 2439--2475.

\bibitem{Doi}
{\sc Doi, M.}
\newblock Molecular dynamics and rheological properties of concentrated
  solutions of rodlike polymers in isotropic and liquid crystalline phases.
\newblock {\em Journal of Polymer Science: Polymer Physics Edition 19}, 2
  (1981), 229--243.

\bibitem{Dud}
{\sc Duduchava, R.}
\newblock On poincar\'e, friedrichs and korns inequalities on domains and
  hypersurfaces.
\newblock {\em arXiv preprint arXiv:1504.01677\/} (2015).

\bibitem{Eva}
{\sc Evans, L.~C.}
\newblock {\em Partial differential equations}, second~ed., vol.~19 of {\em
  Graduate Studies in Mathematics}.
\newblock American Mathematical Society, Providence, RI, 2010.

\bibitem{evans2012introduction}
{\sc Evans, L.~C.}
\newblock {\em An introduction to stochastic differential equations}, vol.~82.
\newblock American Mathematical Soc., 2012.

\bibitem{FigKanMor}
{\sc Figalli, A., Kang, M.-J., and Morales, J.}
\newblock Global well-posedness of the spatially homogeneous kolmogorov--vicsek
  model as a gradient flow.
\newblock {\em Archive for Rational Mechanics and Analysis\/} (Sep 2017).

\bibitem{frouvelle2012continuum}
{\sc Frouvelle, A.}
\newblock A continuum model for alignment of self-propelled particles with
  anisotropy and density-dependent parameters.
\newblock {\em Mathematical Models and Methods in Applied Sciences 22}, 07
  (2012), 1250011.

\bibitem{FrouLiu}
{\sc Frouvelle, A., and Liu, J.-G.}
\newblock {Dynamics in a kinetic model of oriented particles with phase
  transition}.
\newblock {\em {SIAM Journal on Mathematical Analysis} 44}, 2 (2012), 791--826.

\bibitem{GamKan}
{\sc Gamba, I.~M., and Kang, M.-J.}
\newblock Global weak solutions for kolmogorov--vicsek type equations with
  orientational interactions.
\newblock {\em Archive for Rational Mechanics and Analysis 222}, 1 (Oct 2016),
  317--342.

\bibitem{hemelrijk2012schools}
{\sc Hemelrijk, C.~K., and Hildenbrandt, H.}
\newblock Schools of fish and flocks of birds: their shape and internal
  structure by self-organization.
\newblock {\em Interface focus 2}, 6 (2012), 726--737.

\bibitem{hildenbrant}
{\sc Hildenbrandt, H., Carere, C., and Hemelrijk, C.}
\newblock {Self-organized aerial displays of thousands of starlings: a model}.
\newblock {\em Behavioral Ecology 21}, 6 (10 2010), 1349--1359.

\bibitem{jabin2017mean}
{\sc Jabin, P.-E., and Wang, Z.}
\newblock Mean field limit for stochastic particle systems.
\newblock In {\em Active Particles, Volume 1}. Springer, 2017, pp.~379--402.

\bibitem{jiang2020coupled}
{\sc Jiang, N., Luo, Y.-L., and Zhang, T.-F.}
\newblock Coupled self-organized hydrodynamics and navier--stokes models: Local
  well-posedness and the limit from the self-organized kinetic-fluid models.
\newblock {\em Archive for Rational Mechanics and Analysis 236}, 1 (2020),
  329--387.

\bibitem{jiang2016hydrodynamic}
{\sc Jiang, N., Xiong, L., and Zhang, T.-F.}
\newblock Hydrodynamic limits of the kinetic self-organized models.
\newblock {\em SIAM Journal on Mathematical Analysis 48}, 5 (2016), 3383--3411.

\bibitem{kang2016dynamics}
{\sc Kang, M.-J., and Morales, J.}
\newblock Dynamics of a spatially homogeneous vicsek model for oriented
  particles on the plane.
\newblock {\em arXiv preprint arXiv:1608.00185\/} (2016).

\bibitem{kingman1978uses}
{\sc Kingman, J.~F.}
\newblock Uses of exchangeability.
\newblock {\em The Annals of Probability 6}, 2 (1978), 183--197.

\bibitem{maury2010macroscopic}
{\sc Maury, B., Roudneff-Chupin, A., and Santambrogio, F.}
\newblock A macroscopic crowd motion model of gradient flow type.
\newblock {\em Mathematical Models and Methods in Applied Sciences 20}, 10
  (2010), 1787--1821.

\bibitem{navoret2013two}
{\sc NAvoret, L.}
\newblock A two-species hydrodynamic model of particles interacting through
  self-alignment.
\newblock {\em Mathematical Models and Methods in Applied Sciences 23}, 06
  (2013), 1067--1098.

\bibitem{Ons}
{\sc Onsager, L.}
\newblock The effects of shape on the interaction of colloidal particles.
\newblock {\em Annals of the New York Academy of Sciences 51}, 4 (1949),
  627--659.

\bibitem{OttTza}
{\sc Otto, F., and Tzavaras, A.~E.}
\newblock Continuity of velocity gradients in suspensions of rod-like
  molecules.
\newblock {\em Comm. Math. Phys. 277}, 3 (2008), 729--758.

\bibitem{poyato2019filippov}
{\sc Poyato, D.}
\newblock Filippov flows and mean-field limits in the kinetic singular kuramoto
  model.
\newblock {\em arXiv preprint arXiv:1903.01305\/} (2019).

\bibitem{rogers2000diffusions}
{\sc Rogers, L. C.~G., and Williams, D.}
\newblock {\em Diffusions, Markov processes and martingales: Volume 2, It{\^o}
  calculus}, vol.~2.
\newblock Cambridge university press, 2000.

\bibitem{sznitman1991topics}
{\sc Sznitman, A.-S.}
\newblock Topics in propagation of chaos.
\newblock In {\em Ecole d'{\'e}t{\'e} de probabilit{\'e}s de Saint-Flour
  XIX—1989}. Springer, 1991, pp.~165--251.

\bibitem{vicsek1995novel}
{\sc Vicsek, T., Czir{\'o}k, A., Ben-Jacob, E., Cohen, I., and Shochet, O.}
\newblock Novel type of phase transition in a system of self-driven particles.
\newblock {\em Physical review letters 75}, 6 (1995), 1226.

\bibitem{villani2008optimal}
{\sc Villani, C.}
\newblock {\em Optimal transport: old and new}, vol.~338.
\newblock Springer Science \& Business Media, 2008.

\bibitem{zhang2017local}
{\sc Zhang, T.-F., and Jiang, N.}
\newblock A local existence of viscous self-organized hydrodynamic model.
\newblock {\em Nonlinear Analysis: Real World Applications 34\/} (2017),
  495--506.

\end{thebibliography}
\bigskip
\signmb
\signad
\signsm

\end{document}